
\documentclass[journal]{IEEEtran}
%


%

%
\usepackage{cite}

%
\ifCLASSINFOpdf
  \usepackage[pdftex]{graphicx}
  \graphicspath{{./Figs/}}
  \DeclareGraphicsExtensions{.pdf,.png,.epsc}
\else
\fi
%
%

%
\usepackage{amsmath}
\usepackage{amssymb,amsthm}
\usepackage{algpseudocode}
\usepackage{cases}
\usepackage{multirow,makecell}
\usepackage{siunitx}
%
\interdisplaylinepenalty=2500

\newcommand{\norm}[1]{\ensuremath{\left\| #1 \right\|}}

\newcommand{\tr}[1]{\ensuremath{\mathrm{tr}\left( #1 \right)}}
\newcommand{\etr}[1]{\ensuremath{\mathrm{etr}\left( #1 \right)}}
\newcommand{\diag}[1]{\ensuremath{\mathrm{diag}\left( #1 \right)}}
\newcommand{\vect}[1]{\ensuremath{\mathrm{vec}\left( #1 \right)}}
\newcommand{\expect}[1]{\ensuremath{\mathrm{E}\!\left[ #1 \right]}}
\newcommand{\expectbar}[1]{\ensuremath{\bar{\mathrm{E}}\left[ #1 \right]}}
\newcommand{\real}[1]{\ensuremath{\mathbb{R}^{ #1 }}}
\newcommand{\diff}[1]{\ensuremath{\mathrm{d} #1}}
\newcommand{\SO}{\ensuremath{\mathrm{SO}(3)}}
\newcommand{\Sph}{\ensuremath{\mathbb{S}}}
\newcommand{\so}{\ensuremath{\mathfrak{so}(3)}}

\newtheorem{definition}{Definition}
\newtheorem{theorem}{Theorem}
\newtheorem{proposition}{Proposition}
\newtheorem{lemma}{Lemma}
\newtheorem{remark}{Remark}

\makeatletter
\newenvironment{subtheorem}[1]{%
	\def\subtheoremcounter{#1}%
	\refstepcounter{#1}%
	\protected@edef\theparentnumber{\csname the#1\endcsname}%
	\setcounter{parentnumber}{\value{#1}}%
	\setcounter{#1}{0}%
	\expandafter\def\csname the#1\endcsname{\theparentnumber.\Alph{#1}}%
	\ignorespaces
}{%
	\setcounter{\subtheoremcounter}{\value{parentnumber}}%
	\ignorespacesafterend
}
\makeatother
\newcounter{parentnumber}

\newcommand{\algrule}[1][.2pt]{\par\vskip.2\baselineskip\hrule height #1\par\vskip.2\baselineskip}

\hyphenation{op-tical net-works semi-conduc-tor}


\begin{document}
%
\title{Matrix Fisher-Gaussian Distribution on $\SO\times\real{n}$ and Bayesian Attitude Estimation}
%
%
%

\author{Weixin Wang, and Taeyoung Lee
\thanks{W. Wang is a doctoral candidate at the Department of Mechanical and Aerospace Engineering,  The George Washington University, Washington DC, USA.
	{\tt\small  wwang442@gwu.edu}}%
\thanks{T. Lee is an Associate Professor at the Department of Mechanical and Aerospace Engineering,  The George Washington University, Washington DC, USA.
	{\tt\small  tylee@gwu.edu}}%
\thanks{This research has been supported in part by NSF under the grant CNS-1837382, and AFOSR under the grant FA9550-18-1-0288.}}

%
%

\markboth{Submitted to transaction on automatic control}%
{Shell \MakeLowercase{\textit{et al.}}: Bare Demo of IEEEtran.cls for IEEE Journals}
%



\maketitle

\begin{abstract}
    In this paper, a new probability distribution, referred to as the matrix Fisher--Gaussian (MFG) distribution, is proposed on the nonlinear manifold $\SO\times\real{n}$.
    It is constructed by conditioning a ($9+n$)-variate Gaussian distribution from the ambient Euclidean space into $\SO\times\real{n}$, while imposing a certain geometric interpretation of the correlation terms to avoid over-parameterization.
    The unique feature is that it may represent large uncertainties in attitudes, linear variables of an arbitrary dimension, and angular--linear correlations between them in a global fashion without singularities associated with local parameterizations. 
    Various stochastic properties and an approximate maximum likelihood estimator of MFG are developed.
    Furthermore, two methods are developed to propagate uncertainties though a stochastic differential equation representing attitude kinematics. 
    Based on these, a Bayesian estimator is proposed to estimate the attitude and time-varying gyro bias concurrently.
    Numerical studies indicate that the proposed estimator exhibits a better accuracy against the well-established multiplicative extended Kalman filter for two challenging cases. 
\end{abstract}


%
\IEEEpeerreviewmaketitle

\section{Introduction}
%
%
%
%

\IEEEPARstart{A}{ttitude} estimation using a gyroscope with time varying biases has been studied since 1960s \cite{crassidis2007survey}.
The unique challenge in attitude estimation is that the attitude evolves on the nonlinear manifold, referred to as the three-dimensional special orthogonal group $\SO$, which cannot be globally identified with the Euclidean space of the same dimension. 
Any three-dimensional parameterization of $\SO$ leads to singularities~\cite{stuelpnagel1964parametrization}.

A milestone in attitude estimation is the development of multiplicative extended Kalman filter (MEKF) \cite{toda1969spars,lefferts1982kalman,markley2003attitude}, where the mean attitude is represented by a quaternion, and the uncertainty distribution about the mean attitude is described by a three-dimensional attitude parameter following a Gaussian distribution. 
MEKF typically relies on the assumption that the uncertainty distribution is highly concentrated about the mean attitude,
and it adopts the framework of extended Kalman filter to estimate the attitude and bias simultaneously through the linearized attitude kinematics equation. 
MEKF has been used as a standard algorithm for attitude estimation in various disciplines, including aerospace engineering~\cite{crassidis2007survey}, robotics \cite{mourikis2007multi,jimenez2010indoor}.
And it has been generalized to matrix Lie groups \cite{barrau2014intrinsic,barrau2016invariant}.
Besides MEKF, a deterministic attitude observer on $\SO$ has been presented in \cite{mahony2008nonlinear}.

Despite its success, MEKF is limited by fundamental issues inherited from local parameterizations and linearization.
First, only the first order term of the error attitude is considered in the linearization of kinematics equation.
Therefore the error attitude has to be small enough for the linearization to be valid.
Second, the Gaussian distribution of local coordinates does not represent attitude uncertainties properly.
For example, suppose that the exponential coordinate is employed.
In this case, the same attitude corresponds to infinitely many different rotation vectors with the difference in length by $2k\pi,\ k\in\mathbb{Z}$.
As such the probability density function used to describe the uncertainty of the rotation vector should be wrapped, i.e., the densities at rotation vectors representing the same attitude need to be added up.
These are ignored in the common implementation of MEKF, which is problematic especially when the attitude uncertainty is widely dispersed and relatively large density values should be wrapped. 
In short, MEKF is not suitable for attitude estimation with large uncertainties. 

In order to overcome the shortcomings of MEKF, there have been efforts to construct attitude estimators with probability distributions defined directly in $\SO$, instead of the Gaussian distribution for local parameterizations of $\SO$.
In \cite{markley2006attitude}, a probability density function on $\SO\times\real{n}$ is inspired by harmonic analysis, and the Fokker-Planck equation describing the evolution of the density is solved in the embedding Euclidean space to construct an attitude filter on $\SO$.

On the other hand, rotational random variables and matrices in compact manifolds have been studied in directional statistics \cite{mardia2009directional}, which have provided various models for distributions. 
In fact, the attitude part of the density function in \cite{markley2006attitude} is exactly the matrix Fisher distribution on $\SO$ \cite{downs1972orientation,khatri1977mises}.
And the matrix Fisher distribution has been shown to be equivalent to the Bingham distribution defined on the unit-sphere for quaternions with antipodal points identified \cite{bingham1974antipodally,prentice1986orientation}.
Utilizing these, attitude estimators have been developed by using the Bingham distribution in \cite{glover2013tracking,kurz2014recursive}, and by using the matrix Fisher distribution in \cite{LeeITAC18}.
However, these are based on probability distributions on $\SO$, and consequently, the gyro bias cannot be estimated concurrently with the attitude. 

To address this, a probability density function on the product space of $\SO\times\real{n}$ should be utilized such that the angular-linear correlation between attitudes and gyro biases can be described. 
The distribution on a simpler state space $\Sph^1\times\real{1}$ where a circular variable is coupled with a linear variable has been studied in \cite{mardia1978model,johnson1978some,kato2008dependent,abe2017tractable}.
In \cite{darling2016uncertainty}, the Bingham distribution on the unit-sphere $\Sph^r$ is extended to the Gauss-Bingham distribution on $\Sph^r\times\real{n}$.
Nevertheless, similar with \cite{markley2006attitude}, the Gauss-Bingham distribution has the following undesirable characteristics. 
First, the angular-linear correlation is imposed by making the matrix parameter defining the Bingham distribution dependent of the linear variable. 
As the Bingham distribution on $\Sph^r$ is defined by an orientation matrix on $\mathrm{SO}(r+1)$ and concentration parameters, the orientation matrix should be parameterized by the linear variables. 
This brings back the issue of local parameterizations, and more importantly, it is challenging to interpret the geometric meaning of the resulting correlation. 
Second, it does not have a closed form maximum likelihood estimator (MLE), which is essential in propagating the uncertainty while assuming the underlying statistical model remains the same.

In this paper, we introduce a new distribution on $\SO\times\real{n}$, referred to as  the matrix Fisher-Gaussian (MFG) distribution.
It is constructed by conditioning a (9+$n$)-variate Gaussian distribution from $\real{9+n}$ into $\SO\times\real{n}$, which leads to the matrix Fisher distribution for the attitude part. 
Then, the correlation term is projected to the tangent space of $\SO$ at the mean attitude. 
The desirable feature is that it provides an intuitive geometric interpretation for the angular-linear correlation between the three-dimensional attitude and $n$-dimensional linear variable. 
This correlation is represented by $3n$ parameters, thereby avoiding a potential over-parameterization in \cite{markley2006attitude} relying on $9n$ parameters. 
Moreover, although the MLE of MFG cannot by solved analytically, there is a closed form approximation by first solving the marginal MLE for the attitude part.
This reduces the computational load significantly compared with \cite{darling2016uncertainty} that requires iterative numerical optimizations. 
Further, MFG can be approximated by a (3+$n$)-variate Gaussian distribution if the attitude part is concentrated.
Therefore, it can be considered as a generalization of the common approach relying on the Gaussian distribution of three-dimensional attitude parameters.

In the second part of this paper, we design an intrinsic form of Bayesian attitude estimator on  $\SO\times\real{n}$ to estimate the attitude and gyro bias concurrently. 
It is composed of two steps: propagation and correction. 
For propagation, the moments of an MFG required for MLE are propagated through the stochastic differential equation for attitude kinematics, from which the propagated MFG is constructed.
There are two approaches for propagation, namely an analytical method suitable for propagation over a short period, and a numerical method based on unscented transform.
Next, for correction, the posterior MFG is constructed according to the Bayes' rule for a given prior density and measurements.
In contrast to MEKF, the proposed approach does not rely on the assumption of small attitude errors, nor does it suffer from the aforementioned wrapping errors. 
Numerical simulations illustrate that the proposed estimator yields a similar performance as MEKF for small initial errors, but it exhibits better accuracy for challenging cases of large uncertainties. 

The remaining of this paper is organized as follows.
In Section II, we present a brief review of the matrix Fisher distribution.
Next, the matrix Fisher-Gaussian distribution is introduced in Section III with various stochastic properties.
A Bayesian attitude estimator based on MFG is developed in Section IV, followed by numerical examples in Section V and conclusions in Section VI.

\section{Matrix Fisher Distribution} \label{sec:MF}

\subsection{Notations and Some Facts}

The three-dimensional orthogonal group $\mathrm{O}(3)$ is defined as
\begin{equation*}
	\mathrm{O}(3) = \{R\in\mathbb{R}^{3\times 3}\big|\ {RR}^T={I}_{3\times 3} \},
\end{equation*}
which is partitioned into two disconnected subsets depending on the sign of the determinant. 
The component that has determinant $+1$ is referred to as the special orthogonal group and it is denoted by $\SO$, which is commonly used to represent the attitude of a rigid body with the right-handed frame. 
Whereas the other component with determinant $-1$ is involved with reflections.
The Lie algebra of $\mathrm{O}(3)$, denoted by $\mathfrak{o}(3)$, is the tangent space of $\mathrm{O}(3)$ at $I_{3\times 3}$, given by
\begin{equation*}
	\mathfrak{o}(3) =  \{ A\in\real{3 \times 3}\, \big|\ A = -A^T \},
\end{equation*}
which is identical to the Lie algebra of $\SO$, or $\so$. 
This is further identified with $(\real{3},\times)$ by the \textit{hat} $\wedge$ map and the \textit{vee} $\vee$ map defined as follows. 
\begin{equation*}
	\mathfrak{o}(3) = \so\ni\begin{bmatrix}
		0 & -\Omega_z & \Omega_y \\
		\Omega_z & 0 & -\Omega_x \\
		-\Omega_y & \Omega_x & 0
	\end{bmatrix} \begin{array}{c}
		\xrightarrow{\text{vee } \vee} \\ \xleftarrow{\text{hat } \wedge}
	\end{array} \begin{bmatrix}
		\Omega_x \\ \Omega_y \\ \Omega_z
	\end{bmatrix}\in\real{3}.
\end{equation*}
The tangent space at an arbitrary $R\in\mathrm{O}(3)$ is denoted by $T_R\mathrm{O}(3)$, and it can be written as:
\begin{equation*}
    T_R\mathrm{O}(3) = \left\{ RA \in\real{3\times 3} \,\big|\ A\in\mathfrak{o}(3) \right\}.
\end{equation*}
If $R\in\SO$, then it is clear that $T_R\mathrm{O}(3) = T_R\SO$ and $T_R\mathrm{O}(3) = T_{-R}\mathrm{O}(3)$.

In this paper, $e_i$ is used to represent the $i$-th standard base vector of $\real{n}$, i.e., the $i$-th column of $I_{n \times n}$ if written in  a vector form.
If not specified otherwise, $I$ is abbreviated for $I_{3 \times 3}$.
Any 3-by-3 diagonal matrix with the diagonal entries of $1$ or $-1$ is denoted by $\mathcal{D}\in\real{3\times 3}$. 
In particular, when it is augmented with subscripts, the subscripts correspond to the diagonal indices for $1$.
For example, $\mathcal{D}_1 = \diag{1,-1,-1}$.
The $n$-dimensional unit sphere is denoted by $\mathbb{S}^n = \{x\in\real{n+1} \big|\ x^Tx=1 \}$.
The trace of a matrix is denoted by $\tr{\cdot}$, the function $e^{\tr{\cdot}}$ is abbreviated as $\etr{\cdot}$.
Operator $\vect{\cdot}$ is used to concatenate the columns of a matrix into a vector, the Kronecker product is denoted by $\otimes$.

In the calculations in this paper, the following equations regarding the trace of a matrix and the hat map will be repeatedly used.
For any $A,B\in\real{3 \times 3}$,
\begin{equation}
	\tr{AB} = \tr{BA} = \tr{B^TA^T}.
\end{equation}
For any $R\in\SO$, $A\in\real{3 \times 3}$ and $x\in\real{3}$,
\begin{gather}
	\widehat{Rx} = R\hat{x}R^T \\
	\hat{x}^2 = xx^T - x^TxI_{3 \times 3} \\
	(\hat{x}A+A^T\hat{x})^\vee = (\tr{A}I_{3 \times 3}-A)x.
\end{gather}

\subsection{Matrix Fisher Distribution on $\SO$}

The matrix Fisher distribution of a random matrix $R\in\SO$ is defined by the following density function
\begin{equation} \label{eqn:MFDensity}
	p(R;F) = \frac{1}{c(F)}\etr{F^TR},
\end{equation}
with respect to the uniform distribution on $\SO$, where $F\in\real{3 \times 3}$ is the parameter describing the shape of the distribution, and $c(F)\in\real{}$ is the normalizing constant.
This is also denoted by $R\sim\mathcal{M}(F)$. 
Various properties of a matrix Fisher distribution can be accessed through the singular value decomposition (SVD) of $F$~\cite{LeeITAC18,khatri1977mises}.
Two conventions of SVD will be discussed, the first one was proposed in \cite{markley1988attitude} and used in \cite{LeeITAC18} to define the matrix Fisher distribution; the second one was used in \cite{prentice1986orientation}.
This paper will focus on the first convention, but we will show the MFG defined through both conventions are actually equivalent.

\begin{subtheorem}{definition} \label{def:psvd}
	\begin{definition} \label{def:psvd1}
		Let the singular value decomposition of $F$ be given by $F= U'S' V'^T$, where $S'\in\real{3\times3}$ is a diagonal matrix composed of the singular values $ s'_1\geq s'_2\geq s'_3\geq 0$ of $F$, and $U',V'\in\mathrm{O}(3)$.
		The \textit{proper singular value decomposition} of $F$ is 
		\begin{align}
			F=USV^T,\label{eqn:USVp}
		\end{align}
		where the rotation matrices $U,V\in\SO$, and the diagonal matrix $S\in\mathbb{R}^{3\times 3}$ are defined as
		\begin{align}
			U &= U'\mathrm{diag}(1,1,\mathrm{det}[U']), \nonumber \\
			S &= \mathrm{diag}(s_1,s_2,s_3)=\mathrm{diag}(s'_1,s'_2,\mathrm{det}[U'V']s'_3), \nonumber \\
			V &= V'\mathrm{diag}(1,1,\mathrm{det}[V']).
		\end{align}
	\end{definition}
	
	\begin{definition} \label{def:psvd2}
		The second convention for proper singular value decomposition of $F$ is
		\begin{align}
			F = U''S''(V'')^T,
		\end{align}
		where the rotation matrices $U'',V''\in\SO$, and the diagonal matrix $S''\in\mathbb{R}^{3\times 3}$ are defined as
		\begin{align}
			U'' &= U'\det(U'), \nonumber \\
			S'' &= S'\det(U')\det(V'), \nonumber \\
			V'' &= V'\det(V').
		\end{align}
	\end{definition}
\end{subtheorem}
The motivation of the above two definitions of the proper SVD is to ensure $U,V\in\SO$, and they are identical if neither or both of the determinants of $U'$ and $V'$ are $-1$.
If only one of them is $-1$, Definition \ref{def:psvd1} only flips the sign of $s'_3$ and the third column of $U'$ or $V'$; whereas Definition \ref{def:psvd2} flips the sign of the entire $S'$, and $U'$ or $V'$.
In the remaining of this subsection, we will mainly focus on Definition \ref{def:psvd1}.

The normalizing constant $c(F)$ only depends on the proper singular values of $F$, i.e.,
\begin{equation}
	c(F) = c(S) = \int_{Q\in\SO}\etr{SQ^T}\diff{Q},
\end{equation}
where $\diff{Q}$ is the bi-invariant Haar measure for $\SO$, normalized such that $\int_{Q\in\SO}\diff{Q} = 1$.
The first order moment of $R$ is given by 
\begin{gather}
	\expect{R} = UDV^T = U\diag{d_1,d_2,d_3}V^T, \\
	d_i = \frac{1}{c(S)} \frac{\partial c(S)}{\partial s_i} \quad \text{for}\ i=1,2,3 \label{eqn:S2D}.
\end{gather}
However, $\expect{R}$ generally does not belong to $\SO$. 
Instead, the \textit{mean attitude} of $R$ is usually interpreted as $UV^T \triangleq M$, which maximizes the density function \eqref{eqn:MFDensity}.

Similar to the Gaussian distribution, the matrix Fisher distribution has three principal axes, given by the columns of $U$ resolved in the standard coordinates of $\real{3}$, or equivalently the columns of $V$ resolved in the coordinates specified by the columns of $M$.
The attitude rotated from the mean attitude $M$ about the $i$-th principal axis for an angle $\theta$, i.e., $R(\theta) = \exp(\theta\widehat{Ue_i})M = M\exp(\theta\widehat{Ve_i})$, has the density
\begin{equation}
	p(R(\theta);F) = \frac{e^{s_i}}{c(S)}\exp((s_j+s_k)\cos\theta),
\end{equation}
where $\{i,j,k\} = \{1,2,3\}$.
This corresponds to the Von Mises distribution \cite{mardia2009directional} for $\theta$ defined on the unit circle $\mathbb{S}^1$, and its concentration around the mean angle $\theta=0$ is specified by $s_j+s_k$.
An interesting property is that when $s_j+s_k$ is sufficiently large, this is approximated by the Gaussian density with the zero mean and the variance $1/(s_j+s_k)$.
This indicates the distribution of $R$ can be approximated by a 3-dimensional Gaussian distribution if $s_j+s_k$ is large for $i=1,2,3$, or in other words, $R$ is concentrated around its mean attitude $M$~\cite{lee2018bayesian}.
The three principal axes can also be defined using $U''$ and $V''$ in Definition \ref{def:psvd2} analogously.

Finally, the matrix Fisher distribution can be constructed by conditioning a Gaussian distribution in $\real{9}$ onto $\SO$ as follows~\cite{downs1972orientation}.
Let $x_R \triangleq \vect{R^T}\in\real{9}$.
If $x_R$ follows a Gaussian distribution with the mean $\vect{(M')^T}\in\real{9}$ and the covariance matrix $I_{3 \times 3} \otimes (K')^{-1}\in\real{9\times 9}$ for $M'\in\mathrm{O}(3)$ and $0\prec K'\in\real{3\times 3}$,
then the distribution of $R$ conditioned by $RR^T = I_{3 \times 3}$ and $\det(R)=1$ is the matrix Fisher distribution with parameter $F=M'K'$.
Furthermore, if $F$ has its SVD as $F=U'S'(V')^T$, then $M'=U'(V')^T$ and $K'=V'S'(V')^T$.

\subsection{Matrix Fisher Distribution on $\mathrm{O}(3)$} \label{sec:MFO(3)}

The matrix Fisher distribution can also be defined on $\mathrm{O}(3)$ with the same density function \eqref{eqn:MFDensity} \cite{downs1972orientation,khatri1977mises}.
Although the distribution on $\mathrm{O}(3)$ is of little interest in practice, it is crucial to understand the symmetry of the matrix Fisher distribution on $\SO$, which is used to interpret some properties of the MFG discussed in subsequent sections.

For a given $F\in\real{3\times 3}$, let its usual singular value decomposition be $F= U'S'(V')^T$ for $U',V'\in\mathrm{O}(3)$ and a diagonal $S'\in\real{3\times 3}$.
For $\theta\in\mathbb{R}$ and $a\in\mathbb{S}^2$, define $R(\theta,a,\mathcal{D})$ to be an orthogonal matrix rotated from $U'\mathcal{D}(V')^T$ about the axis $a$ by the angle $\theta$ as
\begin{align*}
	R(\theta,a,\mathcal{D}) &= U'\exp(\theta\hat{a})\mathcal{D}(V')^T = \exp(\theta\widehat{U'a})U'\mathcal{D}(V')^T \\
	&= U'\mathcal{D}(V')^T\exp(\theta\widehat{V'\mathcal{D}a}).
\end{align*}
Recall that $\mathcal{D}$ is a diagonal matrix with its diagonal elements composed of $\pm 1$ as introduced in Section II.A.
Also, note that $U'(V')^T$ does not necessarily belong to $\SO$.
Then,
\begin{equation} \label{eqn:O3Density}
	\tr{F^TR} = \begin{bmatrix} s'_1 & s'_2 & s'_3 \end{bmatrix} \mathcal{D} \begin{bmatrix} a_1^2+(a_2^2+a_3^2)\cos\theta \\ a_2^2+(a_1^2+a_3^2)\cos\theta \\ a_3^2+(a_1^2+a_2^2)\cos\theta \end{bmatrix}.
\end{equation}

This equation can be used to derive the global and local extremes of $p(R;F)$.
Here the term ``local'' means the density is minimized or maximized only in one component of $\mathrm{O}(3)$, i.e., in $\SO$ or $\mathrm{O}(3)\backslash\SO$, but not globally in $\mathrm{O}(3)$.
Because $s'_1, s'_2, s'_3$ are non-negative, $\tr{F^TR}$ reaches its global maximum of $s'_1 + s'_2 + s'_3$ at $R(0,a,I_{3 \times 3}) = U'(V')$, and reaches its global minimum of $-s'_1-s'_2-s'_3$ at $R(0,a,-I_{3\times 3}) = -U'(V')^T$.
Note that the maximum and minimum locates in different connected components of $\mathrm{O}(3)$.
On the other hand, the local minimum of $\tr{F^TR}$ in the same component of $U'(V')$ is reached at $R(0,a,\mathcal{D}_3) = U'\mathcal{D}_3(V')^T$; and the local maximum of the other component where $U'(V')^T$ does not belong is reached at $R(0,a,-\mathcal{D}_3) = -U'\mathcal{D}_3(V')$.


Now we examine the two conventions of proper SVD in Definition \ref{def:psvd} when $U'$ or $V'$ has the determinant of $-1$.
Following Definition \ref{def:psvd1}, $UV^T = -U'\mathcal{D}_3(V')$ gives the \textit{local} maximum density in $\SO$.
The concentrations along the principal axes are determined by $s'_2-s'_3$, $s'_1-s'_3$ and $s'_1+s'_2$ respectively, which are the same as those at $-UV^T$, i.e. the \textit{local} minimum in $\mathrm{O}(3)\setminus\SO$.
Following Definition \ref{def:psvd2}, $U''(V'')^T = -U'(V')^T$ gives the \textit{global} minimum density of $\mathrm{O}(3)$.
The concentrations along the principal axes are determined by $s'_2+s'_3$, $s'_1+s'_3$ and $s'_1+s'_2$ respectively, which are the same as those at $U'(V')^T$, i.e. the \textit{global} maximum of $\mathrm{O}(3)$.
From this perspective, it is more natural to use Definition \ref{def:psvd2}, since the principal axes are expressed as rotations from the \textit{global} minimum, which shares the same tangent space as the \textit{global} maximum.
However, in engineering applications, we are more interested in the local maximum on $\SO$, which gives the best estimate of the attitude.


\section{Matrix Fisher-Gaussian Distribution}

In this section, we first give the formal definition of the matrix Fisher-Gaussian distribution, then we present a geometric interpretation and develop some properties as well as an approximate MLE.

\begin{subtheorem}{definition} \label{def:MFG}
	\begin{definition} \label{def:MFG1}
		The random variables $(R,x)\in\mathrm{SO}(3)\times\mathbb{R}^n$ follow the matrix Fisher--Gaussian distribution with parameters ${\mu}\in\mathbb{R}^n$, ${\Sigma}=\Sigma^T\in\mathbb{R}^{n\times n}$, $U,V\in\mathrm{SO}(3)$, $S=\mathrm{diag}(s_1,s_2,s_3)\in\mathbb{R}^{3\times 3}$ with $s_1 \geq s_2 \geq |s_3| \geq 0$ and ${P}\in\mathbb{R}^{n\times 3}$, if it has the following density function:
		\begin{align} \label{eqn:MFGDensity}
			p(R,&x;\mu,\Sigma,V,S,U,{P}) = \frac{1}{c(S)\sqrt{(2\pi)^n\mathrm{det}({\Sigma}_c)}}\times \nonumber \\ &\exp\left\{-\frac{1}{2}(x-\mu_c)^T\Sigma_c^{-1}(x-\mu_c)\right\} \mathrm{etr}\left\{FR^T\right\},
		\end{align}
		where $\mu_c\in\real{n}$ is given by
		\begin{equation} \label{eqn:MFGMiuc}
			\mu_c = \mu+P\nu_R,
		\end{equation}
		with 
		\begin{equation} \label{eqn:vR}
			\nu_R = ({Q}S-S{Q}^T)^\vee,
		\end{equation}
		for ${Q}=U^TRV$, and $0\prec {\Sigma}_c\in\real{n \times n}$  is defined as
		\begin{equation} \label{eqn:MFGSigmac}
			\Sigma_c = \Sigma-P(\tr{S}I_{3\times 3}-S)P^T,
		\end{equation}
		Also, $F=USV^T\in\real{3 \times 3}$, and $c(S)$ is the normalizing constant of the corresponding matrix Fisher distribution.
		This distribution is denoted as $\mathcal{MG}({\mu},{\Sigma},{P},U,S,V)$.
	\end{definition}

	\begin{definition} \label{def:MFG2}
		$\mathcal{MG}({\mu},{\Sigma},{P},U,S,V)$ is defined by replacing the constraint on $S$ in Definition \ref{def:MFG1} by $s_1 \geq s_2 \geq s_3 \geq 0$ or $s_1 \leq s_2 \leq s_3 \leq 0$.
	\end{definition}
\end{subtheorem}

The two definitions only differ when $s_3<0$.
When Definition \ref{def:MFG1} is used, $USV^T = F$ is the proper SVD following Definition \ref{def:psvd1}; whereas when Definition \ref{def:MFG2} is used, $USV^T=F$ is the proper SVD following Definition \ref{def:psvd2}.

The probability density function of MFG given by \eqref{eqn:MFGDensity} is composed of three terms:
the first one is for normalization; the second term is for $x$ and it has the form as $\mathcal{N}(\mu_c, \Sigma_c)$; the last term is for $R$ and it is identical to the matrix Fisher Distribution. 
From its definition, it is straightforward to see that the marginal distribution of $R$ is a matrix Fisher distribution with parameter $F$, and
the distribution of $x$ conditioned by $R$ is Gaussian with $x|R\sim\mathcal{N}(\mu_c(R), \Sigma_c)$. 

When $P=0$, we have $x\sim\mathcal{N}(\mu,\Sigma)$ and $R\sim\mathcal{M}(F)$, and they are independent. 
As such, the attitude-linear correlation is specified by the $3n$ element matrix $P$.
More specifically, the correlation between $x$ and $R$ is from the fact that $(\mu_c,\Sigma_c)$ is dependent of $\mu$, $R$, $F$, and $P$. 
When conditioned by $R$, the mean is shifted by $P\nu_R$ in \eqref{eqn:MFGMiuc}, where $\nu_R$ in \eqref{eqn:vR} indicates how $R$ deviates from the mean attitude $U^TV$. 
For example, when $R=UV^T$ or equivalently $Q=I$, $\nu_R=0$ and $\mu=\mu_c$.

\subsection{Geometric Construction and Interpretation}

The construction of MFG is motivated by \cite{mardia1978model}, where a distribution on the cylinder is constructed by conditioning a 3-dimensional Gaussian distribution on $\real{3}$ into $\mathbb{S}^1 \times \real{1}$.
However, in \cite{mardia1978model} two parameters are used to quantify the angular-linear correlation, despite both of the angular part $\mathbb{S}^1$ and the linear part $\real{1}$ are one-dimensional.
This is because the correlation is formulated between the embedding space of $\mathbb{S}^1$ and $\real{1}$, namely $\real{2}$ and $\real{1}$.

\begin{figure}
	\centering
	\includegraphics{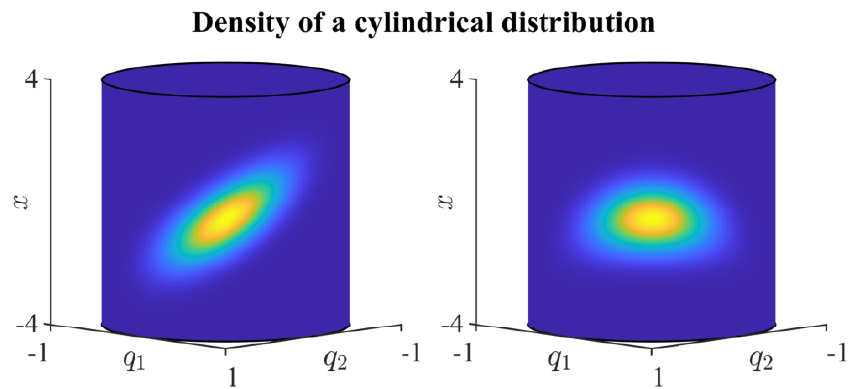}
    \caption{The illustration of the cylindrical distribution with the mean angle $\pi/4$~\cite{mardia1978model}.
        In the left figure, the correlation between the linear variable $x\in\real{1}$ and the angular variable $q=(q_1,q_2)\in\Sph^1$ is only nonzero along the tangent direction of $\mathbb{S}^1$ at $\pi/4$, and the distribution of $q$ conditioned by $x$ varies as $x$ is altered, and vice versa. 
        However in the right figure,  the correlation is specified along the radial direction of $q$, and there is no clear correlation between $x$ and $q$.
        These illustrate the correlation along the tangent direction captures what we expect as a \textit{linear} correlation between two random variables.
	\label{fig:AngularLinearCorrelation}}
\end{figure}

The correlation between two random variables describes how one random variable is varied from its mean, when the other random variable is perturbed from its own mean. 
It shows only the linear relationship averaged. 
As such, we wish that the correlation between $\Sph^1$ and $\real{1}$ is described by a single parameter. 
The key observation we made is that the unit-vector in $\Sph^1$ cannot be varied from the mean exclusively along the radial direction due to the unit-length constraint. 
Further, when it is rotated, the variation along the radial direction is constrained by the variation along the tangential direction. 
Thus, the first-order correlation between $\Sph^1$ and $\real{1}$ can be described by the correlation between the linear variable and the \textit{tangential} direction of the angular variable at the mean angle.
This is illustrated in Fig. \ref{fig:AngularLinearCorrelation}.

Similarly, if the correlation between $R\in\SO$ and $x\in\real{n}$ is defined as in~\cite{mardia1978model}, we would need $9n$ parameters. 
Instead, the embedding space of $\SO$, namely $\real{3\times 3}$, is decomposed into two parts, namely the tangent space at the mean attitude and its orthogonal complement.
And the correlation with linear variables along the orthogonal complement is set to zero. 
In other words, the correlation is defined between the tangent space and the linear variable, thereby reducing the number of free parameters into $3n$.
This yields the following geometric construction of MFG introduced in Definition \ref{def:MFG}.

\begin{theorem} \label{thm:conditioning}
    Consider the parameters $(\mu, \Sigma, V, S, U, P)$ defining MFG as introduced in Definition \ref{def:MFG}.
	Let $M = UV^T\in\SO$ and $K = VSV^T\in\real{3\times 3}$, ${\mu}_R = \mathrm{vec}(M^T)\in\real{9}$, and ${\Sigma}_R^{-1} = I_{3\times 3} \otimes K \in\real{9\times 9}$.

    Also let ${t}_i = \mathrm{vec}[(M\widehat{Ve_{i}})^T]$ for $i\in\{1,2,3\}$ be the basis for the tangent space of $\SO$ at $M$  embedded in $\real{9}$.
    And let $\{t_4,\ldots,t_9\}$ be the orthogonal complement of $\{t_1,t_2,t_3\}$ in $\mathbb{R}^9$.
	Define ${T} = [{t}_1,\ldots,{t}_9]^T\in\real{9\times 9}$, and $P_R = [P,{0}_{n\times6}]{T}\in\real{n\times 9}$. 
	
	Suppose $(x_R,x)\in\real{9+n}$ follows the Gaussian distribution with 
    \begin{align}
        \begin{bmatrix} x_R \\ x \end{bmatrix} 
        \sim \mathcal{N} \left(
            \begin{bmatrix} \mu_R \\ \mu \end{bmatrix},
            \begin{bmatrix}
                \Sigma_R & P_R^T \\
                P_R & \Sigma
        \end{bmatrix} \right).\label{eqn:xRx_Gaussian}
    \end{align}
	Then for $R=\mathrm{vec}^{-1}(x_R)^T\in\real{3\times 3}$, the distribution of $(R,x)$ conditioned on $R^TR = I_{3\times3}$ and $\mathrm{det}(R)=1$ is $\mathcal{MG}({\mu},{\Sigma},{P},U,S,V)$ following either Definition \ref{def:MFG1} or \ref{def:MFG2} depending on the signs of $S$.
\end{theorem}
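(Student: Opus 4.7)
The plan is to prove Theorem \ref{thm:conditioning} by combining the Gaussian-to-matrix-Fisher conditioning result recalled at the end of Section \ref{sec:MF}.B with the standard formula for the conditional distribution of one block of a jointly Gaussian vector given another. First, I would factor the joint density from \eqref{eqn:xRx_Gaussian} as $p(x_R,x)=p(x_R)\,p(x\mid x_R)$. The marginal $p(x_R)$ is Gaussian with mean $\mu_R=\vect{M^T}$ and inverse covariance $I\otimes K$, and because $M=UV^T$ together with $K=VSV^T$ give $MK=USV^T$, conditioning this marginal on $R\in\SO$ produces the matrix Fisher distribution with $F=USV^T$, accounting for the $\etr{FR^T}/c(S)$ factor in \eqref{eqn:MFGDensity}.

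Next, the Gaussian conditioning formula yields $x\mid x_R\sim\mathcal{N}\bigl(\mu+P_R\Sigma_R^{-1}(x_R-\mu_R),\,\Sigma-P_R\Sigma_R^{-1}P_R^T\bigr)$, and it remains to evaluate both parameters at $x_R=\vect{R^T}$ and identify them with $\mu_c$ and $\Sigma_c$ from \eqref{eqn:MFGMiuc}--\eqref{eqn:MFGSigmac}. The key rewriting is $t_i=\vect{(M\widehat{Ve_{i}})^T}=\vect{(U\hat{e}_{i}V^T)^T}$ via the identity $\widehat{Rx}=R\hat{x}R^T$, combined with $(I\otimes K)\vect{X}=\vect{KX}$ and $\vect{A}^T\vect{B}=\tr{A^TB}$.

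For the mean shift, a cyclic trace manipulation reduces $t_i^T\Sigma_R^{-1}(x_R-\mu_R)$ to $\tr{\hat{e}_{i}SQ^T}$ with $Q=U^TRV$, and the identity $\tr{\hat{a}\hat{b}}=-2a^Tb$ applied to the skew-symmetric matrix $QS-SQ^T$ identifies this scalar with the $i$-th entry of $\nu_R=(QS-SQ^T)^\vee$. Since $P_R=[P,\,0_{n\times 6}]T$ picks out only the first three rows of $T$, the mean shift becomes $P\nu_R$, matching $\mu_c$. For the covariance, the $(i,j)$ entry of the upper-left $3\times 3$ block of $T(I\otimes K)T^T$ simplifies by the same manipulations to $-\tr{\hat{e}_{i}S\hat{e}_{j}}$, which equals $(\tr{S}I-S)_{ij}$ either by a direct three-by-three component check or by reading off the matrix of the linear map implicit in the paper's identity $(\hat{x}S+S\hat{x})^\vee=(\tr{S}I-S)x$. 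The zero blocks in $P_R$ annihilate all other entries, so $P_R\Sigma_R^{-1}P_R^T=P(\tr{S}I-S)P^T$ and hence $\Sigma_c$ is recovered.

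The main obstacle is precisely this tangent-space bookkeeping: verifying that plugging the covariance structure $I\otimes K$ and the tangent basis $\{t_1,t_2,t_3\}$ at $M$ into the Gaussian conditioning formula reassembles into the MFG parameters, with the off-diagonal coupling coming out as $\nu_R$ and the covariance deflation as $\tr{S}I-S$. The orthogonal complement $\{t_4,\ldots,t_9\}$ never enters these calculations, which is exactly why only the top-left $3\times 3$ block matters and why the count of free correlation parameters drops from $9n$ to $3n$. Once these identifications are in hand, everything else is routine, and the distinction between Definitions \ref{def:MFG1} and \ref{def:MFG2} reduces to which proper SVD convention of Definition \ref{def:psvd} is used when reading $U,S,V$ off from $F$, which only affects a sign flip in $s_3$ and in the last columns of $U$ and $V$ without changing any step of the argument.
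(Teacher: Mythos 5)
Your proposal is correct and follows essentially the same route as the paper's proof: both factor the joint density into the marginal of $x_R$ (which conditions to the matrix Fisher factor with $F=MK=USV^T$) and the Gaussian conditional of $x$ given $x_R$, and both reduce the mean shift and covariance deflation via the same tangent-basis trace manipulations to $P\nu_R$ and $P(\tr{S}I-S)P^T$. The only cosmetic difference is that you invoke the Gaussian-to-matrix-Fisher conditioning result recalled in Section II.B, whereas the paper re-derives that exponent identity inline.
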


\begin{proof}
	The joint density of $(x_R,x)$ can be written in the form of conditional-marginal density as
	\begin{align} \label{eqn:BlockGaussian}
		p(x_R,x) = \frac{1}{c}&\mathrm{exp}\left\{-\frac{1}{2}(x-{\mu}_c)^T{\Sigma}_c^{-1}(x-{\mu}_c)\right\} \nonumber \\
		\times &\mathrm{exp}\left\{-\frac{1}{2}(x_R-{\mu}_R)^T\Sigma_R^{-1}(x_R-{\mu}_R)\right\},
	\end{align}
	where $c$ is the normalizing constant,  ${\mu}_c = {\mu}+P_R\Sigma_R^{-1}(x_R-{\mu}_R)\in\real{n}$, and ${\Sigma}_c = {\Sigma}-P_R\Sigma_R^{-1}P_R^T\in\real{n\times n}$.
	
	The exponent of the last term of \eqref{eqn:BlockGaussian} can be written as
	\begin{align*}
		&-\frac{1}{2}(x_R-\mu_R)^T\Sigma_R^{-1}(x_R-\mu_R) \\
		= &-\frac{1}{2}\tr{K(\mathrm{vec}^{-1}(x_R)^T-M)^T(\mathrm{vec}^{-1}(x_R)^T-M)} \\
		= &\ \tr{MK\mathrm{vec}^{-1}(x_R)} + C = \tr{FR^T} + C,
	\end{align*}
	where the $C$ is a constant independent of $x_R$ or $x$ because $\mathrm{vec}^{-1}(x_R)\mathrm{vec}^{-1}(x_R)^T = I_{3 \times 3}$ when conditioned on $R^TR = I_{3 \times 3}$.
	So the second term on the right hand side of \eqref{eqn:BlockGaussian} reduces to a matrix Fisher density after conditioning.
	
	Next, for the first term on the right hand side of \eqref{eqn:BlockGaussian}, the second part of $\Sigma_c$ is
	\begin{equation*}
		P_R\Sigma_R^{-1}P_R^T = {P}[{t}_1,{t}_2,{t}_3]^T\Sigma_R^{-1}[{t}_1,{t}_2,{t}_3]{P}^T 
		\triangleq {P} \tilde{\Sigma}_R^{-1} {P}^T,
	\end{equation*}
	for some $\tilde{\Sigma}_R^{-1} \in \real{3 \times 3}$.
	Let $t_i\in\real{9}$ be equally split into three vectors $\{t_{i1},t_{i2},t_{i3}\}$, then the $i,j$-th entry of $\tilde{\Sigma}_R^{-1}$ can be written as
	\begin{align*}
		(\tilde{\Sigma}_R^{-1})_{ij} &= \sum_{k=1}^3{t}_{ik}^T K {t}_{jk} = \mathrm{tr}\left([{t}_{i1},{t}_{i2},{t}_{i3}]^T K [{t}_{j1},{t}_{j2},{t}_{j3}]\right) \nonumber \\
		&= \mathrm{tr}( M \widehat{Ve_i} K \widehat{Ve_j}^T M ^T)
		= \mathrm{tr}(S\hat{e}_j^T\hat{e}_i),
	\end{align*}
	which implies $\tilde{\Sigma}_R^{-1} = \tr{S}I_{3 \times 3}-S$, thus $\Sigma_c$ has the same expression as in \eqref{eqn:MFGSigmac}.
	Besides, the second part of $\mu_c$ is
	\begin{equation*}
		P_R\Sigma_R^{-1}(x_R-{\mu}_R) = {P}[{t}_1,{t}_2,{t}_3]^T\Sigma_R^{-1}\mathrm{vec}(R^T-M^T),
	\end{equation*}
	and
	\begin{gather*}
		t_i^T\Sigma_R^{-1} \mathrm{vec}(R^T-M^T)
		= \mathrm{tr}( M \widehat{Ve_i}K(R^T-M^T)) \nonumber \\
		= \mathrm{tr}(S{Q}^T\hat{e}_i-S\hat{e}_i) = e_i^T({Q}S-S{Q}^T)^\vee, 
	\end{gather*}
	where ${Q} = U^TRV$.
	This shows $\mu_c$ also has the same expression as in \eqref{eqn:MFGMiuc}.
	In conclusion, the density function \eqref{eqn:BlockGaussian} is the same as \eqref{eqn:MFGDensity} after conditioning on $RR^T = I_{3 \times 3}$ and $\det(R) = 1$.
\end{proof}

In \eqref{eqn:xRx_Gaussian}, the covariance between $x_R=\mathrm{vec}(R^T)$ and $x$ is $P_R =[ P, 0_{n\times 6}]T$, i.e., it is non-zero along the first three columns of $T$, which is a basis for the tangent space of $\SO$ at $UV^T$.
The basis is chosen as the principal axes of the matrix Fisher part, so that the correlation $P$ is expressed with respect to the principal axes, which simplifies the corresponding mathematical analysis and provides geometric interpretations.
Also, there are only $3n$ parameters required to quantify the linear correlation between 3-dimensional attitudes and $n$-dimensional linear variables, instead of $9n$.

\begin{figure}
	\centering
	\includegraphics{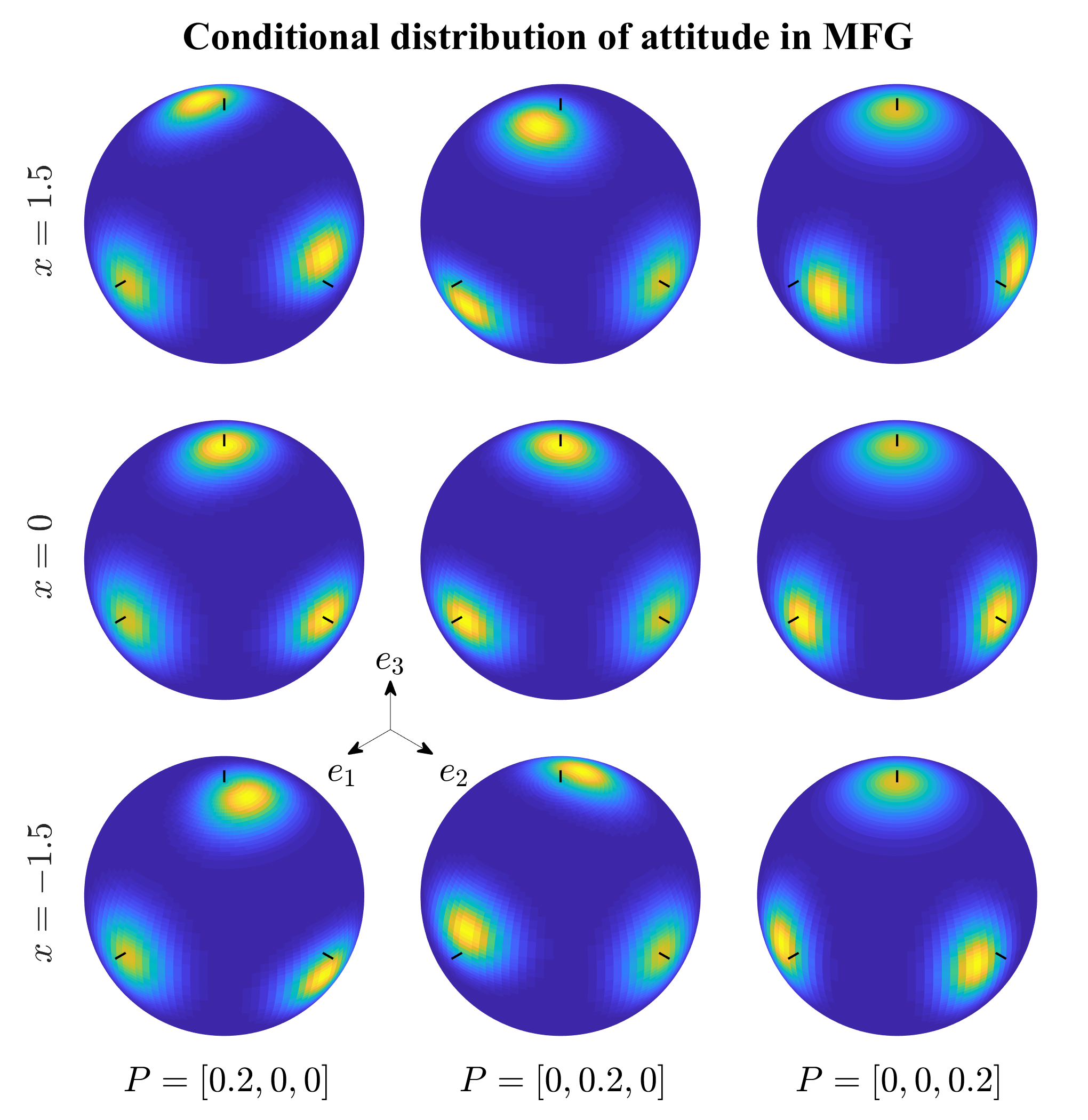}
	\caption{Visualization of the attitude-linear correlation for $(R,x)\in\SO\times\real{1}$: the density for $R$ conditioned on ${x}$ is illustrated on the unit-sphere for three correlation matrices $P$. 
    More specifically, the marginal distribution for each column of $R$ is shown on the unit sphere. 
    The parameters are $n=1$, $\mu=0$, $\Sigma=1$, $U=V=I_{3\times3}$, $S=10I_{3\times3}$.
	For each  $P$, three conditioning values of $x$ are considered. 
	The first column is for ${P}=[0.2,0,0]$.
	When $x$ is increased from $-1.5$ to $1.5$, the conditional distribution for $R$ is rotated about the $e_1$ axis. 
	Similarly, in the second column for ${P}=[0,0.2,0]$, the variation of $x$ is correlated with rotating the distribution of $R$ along the $e_2$ axis.
	The third column shows rotations about $e_3$ due to the correlation. 
	\label{fig:MFGCorrelation}}
\end{figure}

Note that if using Definition \ref{def:MFG1}, when $s_3 < 0$, $UV^T$ is the mean attitude, i.e. the attitude that maximizes the density in $\SO$, and the proper SVD of $F$ uses the convention in Definition \ref{def:psvd1};
if using Definition \ref{def:MFG2}, when $s_3 < 0$, $UV^T$ is the attitude that minimizes the density in $\mathrm{O}(3)$ globally, but $UV^T$ has the same tangent space as $-UV^T$ which maximizes the density in $\mathrm{O}(3)$, and the proper SVD of $F$ uses the convention in Definition \ref{def:psvd2}.
In other words, Definition $\ref{def:MFG1}$ and $\ref{def:MFG2}$ constrain the correlation between $x_R$ and $x$ to be nonzero in different tangent spaces of $\SO$.
Also, in both cases \eqref{eqn:BlockGaussian} is not a true Gaussian density, since $\Sigma_R$ is not positive definite.
However, due to the compactness of $\SO$, the integral of the second term over $\SO$ is always finite, this does not affect the validity of MFG after conditioning.

Because of this construction, the correlation $P$ has a clear geometric interpretation:
if $P_{ij} > 0$, as $x_i$ becomes increased, the distribution of $R$ conditioned on $x$ rotates about the $j$-th principal axis of the matrix Fisher part, and vice versa.
See Fig. \ref{fig:MFGCorrelation} for a simple example.

\subsection{Equivalent Distributions}

As seen in Theorem \ref{thm:conditioning} and \eqref{eqn:vR}, the definition of MFG relies on the proper SVD of $F$ as the correlation is defined along the principal axes represented by $V$.
There are two uniqueness issues associated with the definition of proper SVD.
The trivial one is that when $U$ and $V$ undergo simultaneous sign changes of two columns, they are still the proper left and right singular vectors of $F$.
The other case is more interesting: when $F$ has repeated singular values, the corresponding proper singular vectors in $U$ and $V$ are only unique up to a rotation.
This means the same MFG can be potentially parameterized differently with different proper SVDs.
However, as the next proposition shows, an MFG can be uniquely parameterized by the intermediate parameters $F$, $\mu_c$ and $\Sigma_c$.

\begin{proposition} \label{prop:MFGEquivalent}
	Two matrix Fisher-Gaussian distributions, namely $\mathcal{MG}(\mu,\Sigma,P,U,S,V)$ and $\mathcal{MG}(\tilde{\mu},\tilde{\Sigma},\tilde{P},\tilde{U},\tilde{S},\tilde{V})$ are equivalent if and only if $F=\tilde{F}$, $\mu_c = \tilde{\mu}_c$ for all $R\in\SO$, and $\Sigma_c = \tilde{\Sigma}_c$.
\end{proposition}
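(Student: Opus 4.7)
The plan is to exploit the fact that the MFG density in \eqref{eqn:MFGDensity} factorizes pointwise on $\SO\times\real{n}$ as the product of a matrix Fisher density in $R$ with parameter $F$ and a Gaussian conditional density $\mathcal{N}(\mu_c(R),\Sigma_c)$ for $x\mid R$. The equivalence then reduces to the identifiability of parameters within each of these two well-understood families.

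For the sufficiency direction, if $F=\tilde{F}$ then the normalizing constant $c(S)$ depends only on the singular values of $F$, so it matches on both sides. Combined with the hypotheses $\mu_c(R)=\tilde{\mu}_c(R)$ for every $R\in\SO$ and $\Sigma_c=\tilde{\Sigma}_c$, both factors in \eqref{eqn:MFGDensity} coincide pointwise and the two MFG densities are equal.

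For the necessity direction, I would first integrate \eqref{eqn:MFGDensity} over $x\in\real{n}$ to recover the marginal matrix Fisher density for $R$, so that equality of the full MFG densities forces $\etr{F^TR}/c(F)=\etr{\tilde{F}^TR}/c(\tilde{F})$ for every $R\in\SO$. Taking logarithms, the function $R\mapsto\tr{(F-\tilde{F})^TR}$ must be constant on $\SO$. Substituting $R=\exp(t\hat{\omega})$ and expanding at $t=0$: the first-order coefficient, linear in $\tr{(F-\tilde{F})^T\hat{\omega}}$, forces the skew-symmetric part of $F-\tilde{F}$ to vanish; the second-order coefficient, evaluated via the preliminary identity $\hat{\omega}^2=\omega\omega^T-\omega^T\omega I_{3\times 3}$, forces $F-\tilde{F}$ to be a scalar multiple of $I_{3\times 3}$. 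Evaluating the constancy condition at two rotations of distinct trace, e.g.\ $I_{3\times 3}$ and $\mathcal{D}_1$, then eliminates this scalar, giving $F=\tilde{F}$ and hence $c(S)=c(\tilde{S})$. With the matrix Fisher factors now canceling from the density identity, the remaining pointwise equality $\mathcal{N}(\mu_c(R),\Sigma_c)=\mathcal{N}(\tilde{\mu}_c(R),\tilde{\Sigma}_c)$ at each $R\in\SO$ yields, by uniqueness of Gaussian parameters, $\mu_c=\tilde{\mu}_c$ pointwise and $\Sigma_c=\tilde{\Sigma}_c$ (where the latter is independent of $R$, so equality at a single $R$ suffices).

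The main obstacle, though modest, is identifying $F$ from the matrix Fisher marginal. A first-order tangent-space argument alone leaves the degeneracy $F-\tilde{F}=\alpha I_{3\times 3}$ for some scalar $\alpha$; the quadratic term together with the existence of elements of $\SO$ with distinct trace is what closes this loophole. Once $F=\tilde{F}$ is secured, everything else follows immediately from the conditional--marginal factorization.
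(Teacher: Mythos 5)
Your proof is correct, and while it shares the paper's overall skeleton (isolate the matrix Fisher factor, pin down $F$, then identify the conditional Gaussian parameters), the two key sub-arguments are carried out differently. To isolate the attitude part you marginalize over $x$, which cleanly yields $\etr{FR^T}/c(F)=\etr{\tilde{F}R^T}/c(\tilde{F})$ on $\SO$; the paper instead takes $\lim_{\norm{x}\to\infty}p(x,R)$ (as literally written that limit is zero --- what is meant is the supremum over $x$, i.e.\ the value at $x=\mu_c$), which drags the factor $\sqrt{\det\Sigma_c}$ into the constant but otherwise reaches the same identity $\tr{\Delta F R^T}=\Delta c$. To conclude $\Delta F=0$ from constancy of $R\mapsto\tr{\Delta F^T R}$, the paper evaluates at a handful of explicit rotations ($I_{3\times 3}$, the $\mathcal{D}_i$, and two permutation-type matrices, as in its Lemma~\ref{lemma:MFEquivalent}), giving an elementary entrywise computation; you instead expand $R=\exp(t\hat{\omega})$ to second order, killing the skew part at first order and reducing to $\Delta F=\alpha I_{3\times 3}$ at second order via $\hat{\omega}^2=\omega\omega^T-\omega^T\omega I_{3\times 3}$, then eliminating $\alpha$ with two rotations of distinct trace (in fact $\alpha=\tr{\Delta F}=3\alpha$ already forces $\alpha=0$). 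Your Lie-algebraic route is more conceptual and would generalize to $\mathrm{SO}(m)$ without hunting for test matrices, while the paper's is more elementary. For the Gaussian part you invoke identifiability of $(\mu_c,\Sigma_c)$ from the conditional density, where the paper substitutes $x=\mu_c$, $x=\mu_c+e_i$, $x=\mu_c+e_i+e_j$; these are the same argument at different levels of explicitness. No gaps.
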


The proof of this proposition requires the following lemma, saying that a matrix Fisher distribution is uniquely parameterized by its parameter $F$.

\begin{lemma} \label{lemma:MFEquivalent}
	Two matrix Fisher distributions, namely  $\mathcal{M}(F)$ and $\mathcal{M}(\tilde F)$ are equivalent if and only if $F=\tilde{F}$.
\end{lemma}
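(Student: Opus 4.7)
The ``if'' direction is immediate from \eqref{eqn:MFDensity}. For the converse, I would start from the observation that if $\mathcal{M}(F)$ and $\mathcal{M}(\tilde F)$ coincide as distributions, then their density functions agree almost everywhere on $\SO$, and hence pointwise by continuity. Taking logarithms and absorbing the difference of normalizing constants, one finds that, with $D := F - \tilde F$, the quantity $\tr{D^T R}$ is constant as $R$ ranges over $\SO$. The proof then reduces to showing that this constancy forces $D = 0$.

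To pin down the value of the constant, I would integrate $\tr{D^T R}$ against the normalized bi-invariant Haar measure on $\SO$ and invoke $\int_{\SO} R \diff{R} = 0$. This latter identity holds because left invariance of the Haar measure forces the integral to be fixed by every $R_0 \in \SO$, and the only matrix fixed by every rotation is the zero matrix. Hence the constant is zero, and $\tr{D^T R} = 0$ for every $R \in \SO$.

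It remains to extract $D = 0$ from this vanishing of a linear functional on $\SO$. The plan is to evaluate along one-parameter subgroups $R = \exp(\theta \hat{u})$ for unit $u \in \Sph^2$ and use Rodrigues' formula $\exp(\theta \hat{u}) = I + \sin\theta\,\hat{u} + (1 - \cos\theta)\hat{u}^2$. Differentiating $\tr{D^T \exp(\theta \hat{u})} = 0$ once in $\theta$ at $\theta = 0$ gives $\tr{D^T \hat{u}} = 0$ for every $u$, which is precisely the statement that the skew-symmetric part of $D$ vanishes, so $D$ is symmetric. Differentiating a second time and using $\hat{u}^2 = uu^T - u^Tu\,I$ produces $u^T D u = \tr{D}$ for every unit $u$; a standard quadratic-form argument (homogenize in $u$ to conclude $x^T(D - \tr{D}\,I)x = 0$ on all of $\real{3}$) then forces $D = \tr{D}\,I$, and $\tr{D} = 3\tr{D}$ in turn forces $\tr{D} = 0$, hence $D = 0$ and $F = \tilde F$.

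The only delicate step is this last passage from an identity on $\SO$ to an algebraic identity on $\real{3\times 3}$, since $\SO$ is a curved submanifold rather than a linear subspace. Differentiating along one-parameter subgroups is what breaks through: it converts the rotational constraint into decoupled conditions on the symmetric and skew-symmetric parts of $D$ that can each be dispatched by elementary linear algebra.
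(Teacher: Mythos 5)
Your proof is correct, but it takes a genuinely different route from the paper's. Both arguments first reduce the problem to showing that $\tr{\Delta F R^T}-\Delta c=0$ for all $R\in\SO$ forces $\Delta F=0$; from there the paper proceeds by finite test-matrix evaluation, substituting $R=I_{3\times 3},\mathcal{D}_1,\mathcal{D}_2,\mathcal{D}_3$ to solve a small linear system for the diagonal of $\Delta F$ and for $\Delta c$, and then a few explicit permutation-type rotations to kill the off-diagonal entries one at a time. You instead kill the constant by integrating against the Haar measure (using $\int_{\SO}R\,\diff{R}=0$, which your invariance argument correctly justifies) and then differentiate along one-parameter subgroups $\exp(\theta\hat{u})$: the first derivative annihilates the skew part of $D$, and the second derivative, via $\hat{u}^2=uu^T-I$ for unit $u$, yields $u^TDu=\tr{D}$ and hence $D=\tr{D}\,I=0$. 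Each step checks out, including the homogenization of the quadratic form and the trace bootstrap $\tr{D}=3\tr{D}$. What the paper's approach buys is complete elementarity---no integration or calculus on the group, just linear algebra on nine unknowns; what yours buys is a coordinate-free argument that cleanly separates the symmetric and skew-symmetric obstructions and would adapt with little change to $\mathrm{SO}(n)$ for general $n$, where hand-picking test matrices becomes less pleasant.
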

\begin{proof}
	It is trivial to show $F = \tilde F$ implies $\mathcal{M}(F)=\mathcal{M}(\tilde F)$.
	Next, suppose $\mathcal{M}(F)=\mathcal{M}(\tilde F)$, i.e.,  $\frac{\etr{FR^T}}{c(F)} = \frac{\etr{\tilde{F}R^T}}{c(\tilde{F})}$ for all $R\in\SO$.
	Let $\Delta F = F-\tilde{F}$ and $\Delta c = \log(c(F)/c(\tilde{F}))$, the above equation is equivalent to
	\begin{equation} \label{eqn:MFEquivalent}
		\tr{\Delta FR^T} - \Delta c = 0.  
	\end{equation}
	Substitute $R = I_{3\times3}$, $\mathcal{D}_1$, $\mathcal{D}_2$ and $\mathcal{D}_3$ into \eqref{eqn:MFEquivalent} to obtain
	\begin{align*} 
		\Delta F_{11} + \Delta F_{22} + \Delta F_{33} - \Delta c &= 0, \\
		\Delta F_{11} - \Delta F_{22} - \Delta F_{33} - \Delta c &= 0, \\
		-\Delta F_{11} + \Delta F_{22} - \Delta F_{33} - \Delta c &= 0, \\
		-\Delta F_{11} - \Delta F_{22} + \Delta F_{33} - \Delta c &= 0,
	\end{align*}
	which shows $\Delta F_{11} = \Delta F_{22} = \Delta F_{33} = \Delta c = 0$.
	Next, substituting $R = \begin{bmatrix} 0 & 1 & 0 \\ 1 & 0 & 0 \\ 0 & 0 & -1 \end{bmatrix}$ and $\begin{bmatrix} 0 & -1 & 0 \\ 1 & 0 & 0 \\ 0 & 0 & 1 \end{bmatrix}$ into \eqref{eqn:MFEquivalent} yields $\Delta F_{21} = 0$.
	Similarly, other entries of $\Delta F$ can be shown to be zeros. 
	Therefore, $F=\tilde{F}$.
\end{proof}

Next, we present the proof for Proposition \ref{prop:MFGEquivalent}.

\begin{proof}[Proof for Proposition \ref{prop:MFGEquivalent}]
	The sufficiency directly follows from \eqref{eqn:MFGDensity} since the density function is determined by $F$, $\mu_c$ and $\Sigma_c$.
	Next we show necessity.
	Let $p$ and $\tilde p$ be the density functions for the two sets of parameters respectively.
	Suppose $p(x,R) = \tilde p(x, R)$  for all $x\in\real{n}$ and $R\in\SO$.
	Since $\SO$ is compact, and $\mu_c$ is continuous in $R$ as seen from \eqref{eqn:MFGMiuc}, $\norm{\mu_c}$ has an upper bound.
	Therefore,
	\begin{equation*}
		\lim\limits_{\norm{x}\to\infty} p(x,R) = \frac{\etr{FR^T}}{c(F)\sqrt{(2\pi)^n\mathrm{det}({\Sigma}_c)}}.
	\end{equation*}
	Because $p(x,R) = \tilde p(x,R)$ for all $x\in\real{n}$, the above equation implies $\frac{\etr{FR^T}}{c(F)\sqrt{(2\pi)^n\det(\Sigma_c)}} = \frac{\etr{\tilde{F}R^T}}{c(\tilde{F})\sqrt{(2\pi)^n\det(\tilde{\Sigma}_c)}}$, and by following the same argument in Lemma \ref{lemma:MFEquivalent}, $F=\tilde{F}$.
	
	Next, since $F=\tilde{F}$ and $p(x,R) = \tilde p(x,R)$, we have $(x-\mu_c)^T\Sigma_c^{-1}(x-\mu_c) = (x-\tilde{\mu}_c)^T\tilde{\Sigma}_c^{-1}(x-\tilde{\mu}_c)$ for all $x\in\real{n}$ and $R\in\SO$.
	Substituting $x = \mu_c$ yields $\mu_c = \tilde{\mu}_c$ for all $R\in\SO$, since $\tilde{\Sigma}_c$ is positive-definite.
	Also, substituting $x = \mu_c+e_i$ shows $(\Sigma_c^{-1})_{ii} = (\tilde{\Sigma}_c^{-1})_{ii}$, and similarly, substituting $x = \mu_c+e_i+e_j$ for $i \neq j$ yields $(\Sigma_c^{-1})_{ij} = (\tilde{\Sigma}_c^{-1})_{ij}$ since $\Sigma_c^{-1}$ and $\tilde{\Sigma}_c^{-1}$ are symmetric, where $e_i$ is the $i$-th column of $I_{n \times n}$.
	Therefore $\Sigma_c = \tilde{\Sigma}_c$.
\end{proof}

To further break down the equivalent conditions for MFG into the parameters $U$, $S$ and $V$, a detailed analysis on the multiplicity of singular values $S$ is required.
In Appendix \ref{appendix:MFGEquivalent}, we provide a complete characterization of the equivalent parameters of an MFG.
It shows that an MFG can be parameterized differently by rotating $U$, $V$, and $P$ in a consistent way, if $S$ has repeated values.

\subsection{Equivalence of the Two Definitions}


As discussed in Section \ref{sec:MFO(3)} and Theorem \ref{thm:conditioning}, it appears that Definition \ref{def:MFG2} is more natural for an MFG, because it uses the tangent space at the global maximum of $\mathrm{O}(3)$, instead of the local maximum of $\SO$ to define the correlation.
Also, Theorem \ref{thm:MFGEquivalent} shows under this definition, the covariance matrix $\Sigma$ of the linear variables dose not change if different sets of parameters are used, as opposed to case \ref{case:s1=s2=-s3 neq 0}) and \ref{case:s1 neq s2=-s3 neq 0}) using Definition \ref{def:MFG1}.
However, as the next theorem will show, the two definitions are actually equivalent in the sense that for every MFG using Definition \ref{def:MFG1}, there is an equivalent MFG using Definition \ref{def:MFG2}, and vice versa.

\begin{theorem} \label{thm:defEquivalent}
	Let $S=\diag{s_1,s_2,s_3}$ with $s_1 \geq s_2 \geq -s_3>0$, then $\mathcal{MG}(\mu,\Sigma,P,U,S,V)$ using Definition \ref{def:MFG1} is equivalent to $\mathcal{MG}(\mu,\tilde{\Sigma},P,U,S\mathcal{D}_3,V\mathcal{D}_3)$ using Definition \ref{def:MFG2}, where $\tilde{\Sigma} = \Sigma - P(\tr{S}I-S)P^T + P(\tr{S\mathcal{D}_3}I-S\mathcal{D}_3)P^T$.
\end{theorem}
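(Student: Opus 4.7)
My plan is to invoke Proposition~\ref{prop:MFGEquivalent}, which reduces the task to verifying three identities: $F = \tilde{F}$, $\mu_c(R) = \tilde{\mu}_c(R)$ for every $R \in \SO$, and $\Sigma_c = \tilde{\Sigma}_c$. Before that, however, I must check that the proposed parameters $(\mu,\tilde{\Sigma},P,U,S\mathcal{D}_3,V\mathcal{D}_3)$ form an admissible parameter set under Definition~\ref{def:MFG2}. Since $\mathcal{D}_3 = \diag{-1,-1,1}$ has determinant $+1$, the matrix $V\mathcal{D}_3$ remains in $\SO$; and the diagonal of $S\mathcal{D}_3$ is $(-s_1,-s_2,s_3)$, which under the hypothesis $s_1\geq s_2\geq -s_3>0$ satisfies $-s_1 \leq -s_2 \leq s_3 \leq 0$, matching the second branch of Definition~\ref{def:MFG2}. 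I would also note that $\tilde{\Sigma}_c$ will coincide with $\Sigma_c$ (verified below), hence positive-definite, so $\tilde{\Sigma}$ is a valid covariance parameter.

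Next I would verify the three identities in turn. For the first, $\tilde{F} = U(S\mathcal{D}_3)(V\mathcal{D}_3)^T = US\mathcal{D}_3^{\,2}V^T = USV^T = F$, using that $\mathcal{D}_3^{\,2}=I$. For the third, substituting the definition of $\tilde{\Sigma}$ directly gives
\begin{equation*}
    \tilde{\Sigma}_c = \tilde{\Sigma} - P(\tr{S\mathcal{D}_3}I - S\mathcal{D}_3)P^T = \Sigma - P(\tr{S}I-S)P^T = \Sigma_c,
\end{equation*}
so the expression for $\tilde{\Sigma}$ has been engineered precisely to match. The only step with any content is the identity $\mu_c = \tilde{\mu}_c$, which reduces to showing $\nu_R = \tilde{\nu}_R$ through \eqref{eqn:vR}.

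For that, I would set $Q = U^T R V$ and note $\tilde{Q} = U^T R (V\mathcal{D}_3) = Q\mathcal{D}_3$. Then
\begin{align*}
    \tilde{Q}(S\mathcal{D}_3) - (S\mathcal{D}_3)\tilde{Q}^T
    &= Q\mathcal{D}_3 S \mathcal{D}_3 - S\mathcal{D}_3\mathcal{D}_3 Q^T \\
    &= QS - SQ^T,
\end{align*}
where I use that diagonal matrices commute (so $\mathcal{D}_3 S = S \mathcal{D}_3$) together with $\mathcal{D}_3^{\,2} = I$. Taking the vee map of both sides yields $\tilde{\nu}_R = \nu_R$, hence $\tilde{\mu}_c = \mu + P\tilde{\nu}_R = \mu + P\nu_R = \mu_c$ for every $R$.

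The proof is essentially a bookkeeping exercise built around the single algebraic fact that $\mathcal{D}_3$ is an involution commuting with diagonal matrices and lying in $\SO$; I do not anticipate a genuine obstacle. The only place requiring some care is confirming that $(S\mathcal{D}_3, V\mathcal{D}_3)$ actually lands in the regime covered by Definition~\ref{def:MFG2} rather than Definition~\ref{def:MFG1}, and that the ordering condition on the signed singular values is preserved under the hypothesis $s_2 \geq -s_3$; once those book-keeping checks are in place, Proposition~\ref{prop:MFGEquivalent} concludes the argument.
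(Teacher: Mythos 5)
Your proposal is correct and follows essentially the same route as the paper: reduce via Proposition~\ref{prop:MFGEquivalent} to checking $F=\tilde F$, $\mu_c=\tilde\mu_c$, and $\Sigma_c=\tilde\Sigma_c$, each of which follows from $\mathcal{D}_3$ being a diagonal involution of determinant one. The only addition beyond the paper's argument is your explicit check that $(S\mathcal{D}_3, V\mathcal{D}_3)$ satisfies the sign-ordering constraint of Definition~\ref{def:MFG2}, which the paper leaves implicit.
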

\begin{proof}
	By Proposition \ref{prop:MFGEquivalent}, it suffices to prove $F=\tilde{F}$, $\Sigma_c=\tilde{\Sigma}_c$, $\mu_c=\tilde{\mu}_c$ for all $R\in\SO$, where $F,\,\Sigma_c,\,\mu_c$ are the intermediate parameters for the MFG using definition \ref{def:MFG1}, and $\tilde{F},\tilde{\Sigma}_c,\tilde{\mu}_c$ are those using definition \ref{def:MFG2}.
	This is immediate since
	\begin{align*}
		&\tilde{F} = US\mathcal{D}_3\mathcal{D}^T_3V^T = USV^T = F, \\
		&\tilde{\Sigma}_c = \tilde{\Sigma} - P(\tr{S\mathcal{D}_3}I-S\mathcal{D}_3)P^T = \Sigma_c,
	\end{align*}
	and for all $R\in\SO$,
	\begin{align*}
		\tilde{\mu}_c(R) &= \mu + P(U^TRV\mathcal{D}_3S\mathcal{D}_3-\mathcal{D}_3S\mathcal{D}_3V^TR^TU)^\vee \\
		&= \mu + P(U^TRVS-SV^TR^TU)^\vee = \mu_c(R),
	\end{align*}
	which finishes the proof.
\end{proof}

Since when $s_3 \geq 0$, the two definitions are the same, Theorem \ref{thm:defEquivalent} proves there is a one-to-one correspondence between the two definitions.
Therefore, for any theorem regarding one definition, there is a counterpart theorem formulated for the other definition by twisting the parameters properly.
Due to this equivalence relationship, we only consider Definition \ref{def:MFG1} throughout the remaining parts of this paper for its convenience in engineering applications.

\subsection{Gaussian Approximation}

It has been shown that a matrix Fisher distribution can be approximated by a 3-dimensional Gaussian distribution when it is highly concentrated, or equivalently $s_3 \gg 0$~\cite{lee2018bayesian}. 
This is also true for MFG as stated in the following theorem.
\begin{theorem}
	Suppose $(x,R) \sim \mathcal{MG}(\mu,\Sigma,P,U,S,V)$.
    Let $R=U\exp(\hat{\eta})V^T$.
	If $s_3 \gg 0$, then $(x,\eta)$ approximately follows a $3+n$ dimensional Gaussian distribution with 
    \begin{align}
        \begin{bmatrix} x \\ \eta \end{bmatrix}
        \sim
        \mathcal{N} \left(
            \begin{bmatrix} \mu \\ 0 \end{bmatrix},
            \begin{bmatrix} \Sigma & P \\ P^T & (\tr{S}I_{3 \times 3}-S)^{-1} \end{bmatrix}
        \right).\label{eqn:MFGapprox}
    \end{align}
\end{theorem}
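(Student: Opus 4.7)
The plan is to substitute the local parametrization $R=U\exp(\hat\eta)V^T$ into the MFG density \eqref{eqn:MFGDensity} and expand everything to second order in $\eta$, so that the result matches, term by term, the conditional--marginal factorization of the candidate Gaussian in \eqref{eqn:MFGapprox}. Let $K=\tr{S}I_{3\times3}-S$; since $s_3\gg 0$, $K$ is positive definite with small inverse, so the Taylor expansions below are justified and $\eta$ is tightly concentrated near $0$. The coordinate change from $R$ to $\eta$ with the Haar measure gives $\diff{R}\approx c_0\diff{\eta}$ for small $\eta$, with a Jacobian that is $1+O(\|\eta\|^2)$, and can be absorbed into the normalizing constant.

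First I would expand the three ingredients of \eqref{eqn:MFGDensity}. Using the invariance of trace and $R^T=V\exp(-\hat\eta)U^T$, we have $\tr{FR^T}=\tr{S\exp(-\hat\eta)}$. With $\exp(-\hat\eta)\approx I-\hat\eta+\tfrac12\hat\eta^2$, the linear term vanishes since $\tr{S\hat\eta}=0$ ($S$ symmetric, $\hat\eta$ skew); and from identity (3) the quadratic term yields $\tfrac12\tr{S\hat\eta^2}=\tfrac12(\eta^TS\eta-\tr{S}\eta^T\eta)=-\tfrac12\eta^TK\eta$. Hence
\begin{equation*}
\tr{FR^T}\approx \tr{S}-\tfrac12\eta^TK\eta.
\end{equation*}
Next, with $Q=U^TRV=\exp(\hat\eta)\approx I+\hat\eta$, we get $QS-SQ^T\approx \hat\eta S+S\hat\eta$, and by identity (4) (with $A=S$, $x=\eta$) this gives $\nu_R\approx K\eta$. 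Substituting into \eqref{eqn:MFGMiuc} yields $\mu_c\approx \mu+PK\eta$, while $\Sigma_c$ is independent of $R$.

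Collecting these approximations, the joint density in $(x,\eta)$ reads
\begin{align*}
p(x,\eta)\;\propto\;&\exp\!\bigl\{-\tfrac12(x-\mu-PK\eta)^T\Sigma_c^{-1}(x-\mu-PK\eta)\bigr\}\\
&\times\exp\!\bigl\{-\tfrac12\eta^TK\eta\bigr\},
\end{align*}
which is exactly the conditional--marginal factorization $p(x\mid\eta)\,p(\eta)$ with $\eta\sim\mathcal{N}(0,K^{-1})$ and $x\mid\eta\sim\mathcal{N}(\mu+PK\eta,\Sigma_c)$. From the standard Gaussian conditioning formulas, this is equivalent to $(x,\eta)$ being jointly Gaussian with mean $(\mu,0)$, cross-covariance $\mathrm{Cov}(x,\eta)=PK\cdot K^{-1}=P$, and marginal variance $\mathrm{Var}(x)=\Sigma_c+PKP^T=\Sigma$, which is exactly \eqref{eqn:MFGapprox}.

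The main obstacle is the bookkeeping for the normalizing constant: one must verify that the Jacobian of $R\mapsto\eta$ with respect to the bi-invariant Haar measure, combined with the large-$S$ asymptotic expansion $c(S)\approx e^{\tr{S}}(2\pi)^{3/2}/\sqrt{\det K}$ known for the matrix Fisher distribution (used in \cite{lee2018bayesian}), produces exactly the $(2\pi)^{(3+n)/2}\sqrt{\det(\Sigma_c)/\det K}$ needed for a $(3+n)$-variate Gaussian with the claimed covariance. Once the shape of the exponent is matched as above, this normalization check is essentially the same as in the pure matrix Fisher case; the linear-variable piece $\Sigma_c^{-1/2}$ is untouched by the approximation and combines cleanly because $\Sigma_c$ does not depend on $R$.
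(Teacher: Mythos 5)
Your proposal is correct and follows essentially the same route as the paper: expand $\tr{FR^T}=\tr{S\exp(\pm\hat\eta)}$ to second order using identity (3) to get the Gaussian factor $\exp(-\tfrac12\eta^T(\tr{S}I-S)\eta)$, linearize $\nu_R\approx(\tr{S}I-S)\eta$ via identity (4) so that $\mu_c$ becomes affine in $\eta$, and read off the joint Gaussian from the conditional--marginal factorization. The only cosmetic difference is that the paper introduces the rescaled variable $\xi=\sqrt{\Sigma'}^{\,-1}\eta$ to track the order of the neglected terms, whereas you argue directly from $s_3\gg 0$ and additionally sketch the normalization bookkeeping that the paper leaves implicit.
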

\begin{proof}
	For the matrix Fisher density part in \eqref{eqn:MFGDensity}, we have $\etr{F^TR} = \etr{S\exp(\hat{\eta})}$.
    Let $\Sigma' = (\tr{S}I_{3 \times 3}-S)^{-1} = \diag{\frac{1}{s_2+s_3},\frac{1}{s_1+s_3},\frac{1}{s_1+s_3}} \in\real{3\times 3}$, and let $\xi\in\real{3}$ be defined as $\sqrt{\Sigma'}\xi = \eta$.
    Then, 
	\begin{align*}
		&\etr{F^TR} = \etr{S\exp\left( (\sqrt{\Sigma'}\xi)^\wedge \right)} \\
		=\; &\etr{S \left( I + (\sqrt{\Sigma'}\xi)^\wedge + \frac{1}{2} \left((\sqrt{\Sigma'}\xi)^\wedge\right)^2 + o(\Sigma') \right)} \\
		\propto\; &\etr{\frac{1}{2}S\left((\sqrt{\Sigma'}\xi)^\wedge\right)^2 + o(\Sigma')} \\
		\approx\; &\etr{-\frac{1}{2}\eta^T(\Sigma')^{-1}\eta}.
	\end{align*}
	Also, for the $\nu_R$ term in the conditional mean \eqref{eqn:MFGMiuc}, we have
	\begin{align*}
        \hat\nu_R  & =  \exp\left((\sqrt{\Sigma'}\xi)^\wedge\right)S - S\exp\left((\sqrt{\Sigma'}\xi)^\wedge\right)^T  \\
                   & = \left( I + (\sqrt{\Sigma'}\xi)^\wedge \right)S - S\left( I + (\sqrt{\Sigma'}\xi)^\wedge \right)^T + o\left(\sqrt{\Sigma'}\right) \\
                   & \approx ((\Sigma')^{-1}\eta)^\wedge,
	\end{align*}
	which yields $\mu_c \approx \mu+P(\Sigma')^{-1}\eta$.
	Furthermore  $\Sigma_c = \Sigma - P(\Sigma')^{-1}P^T$. 
    Therefore, \eqref{eqn:MFGDensity} is approximated by a $3+n$ dimensional Gaussian density written in the conditional-marginal form, which is identical to \eqref{eqn:MFGapprox}.
\end{proof}

In other words, when the attitude uncertainty is highly concentrated, an MFG is approximated by a Gaussian distribution on $\real{n+3}$. 
This is because the densities faraway from the mean attitude are negligible, and the densities close to the mean attitude can be approximated by a Gaussian distribution in its tangent space.

\subsection{Moments}

Next, we present selected moments of MFG, which are used in the approximate MLE in the next subsection.
\begin{theorem} \label{thm:MFGMoments}
    Suppose $(R,x)\sim\mathcal{MG}({\mu},{\Sigma},{P},U,S,V)$.
    Then,
    \begin{equation} \label{eqn:MFGER}
        \mathrm{E}[R] = UDV^T,
    \end{equation}
    where $ D =\mathrm{diag}(d_1,d_2,d_3)$ is given in \eqref{eqn:S2D}.
    Also,
    \begin{align}
        &\mathrm{E}[x] = {\mu}, \label{eqn:MFGEx} \\
        &\mathrm{E}[\nu_R] = 0, \label{eqn:MFGEfR} \\
        &\mathrm{E}[xx^T] = {\Sigma}_c+{\mu}{\mu}^T+{P}\mathrm{E}[\nu_R\nu_R^T]{P}^T, \label{eqn:MFGExx} \\
        &\mathrm{E}[x\nu_R^T] = {P}\mathrm{E}[\nu_R\nu_R^T], \label{eqn:MFGExfR}
    \end{align}
    where $\mathrm{E}[\nu_R\nu_R^T]\in\real{3\times3}$ is a diagonal matrix with the $i$-th diagonal element given by
    \begin{equation} \label{eqn:MFGEfRfR}
        (\mathrm{E}[\nu_R\nu_R^T])_{ii} = (s_j^2+s_k^2)\mathrm{E}[Q_{jk}^2]-2s_js_k\mathrm{E}[Q_{jk}Q_{kj}],
    \end{equation}
    for $\{i,j,k\}=\{1,2,3\}$,
	where explicit expressions to calculate $\mathrm{E}[Q_{jk}^2]$ and $\mathrm{E}[Q_{jk}Q_{kj}]$ can be found in \cite{lee2017bayesian}.
\end{theorem}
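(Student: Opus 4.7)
The plan is to exploit the conditional--marginal structure built into Definition \ref{def:MFG}: the marginal of $R$ is $\mathcal{M}(F)$ and $x\mid R$ is Gaussian with mean $\mu_c = \mu + P\nu_R$ and covariance $\Sigma_c$ that does not depend on $R$. Identity \eqref{eqn:MFGER} then follows immediately from the first moment of the marginal matrix Fisher distribution recalled in Section~\ref{sec:MF}. Every other identity will collapse, via the tower property $\mathrm{E}[\,\cdot\,] = \mathrm{E}[\mathrm{E}[\,\cdot\,\mid R]]$, onto moments of $\nu_R$ under the matrix Fisher marginal, which I would transport to the diagonal case by the substitution $Q = U^T R V \sim \mathcal{M}(S)$.

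Applying the tower property gives
\begin{align*}
    \mathrm{E}[x] &= \mu + P\,\mathrm{E}[\nu_R], \\
    \mathrm{E}[xx^T] &= \Sigma_c + \mu\mu^T + \mu\,\mathrm{E}[\nu_R]^T P^T + P\,\mathrm{E}[\nu_R]\mu^T + P\,\mathrm{E}[\nu_R\nu_R^T]P^T, \\
    \mathrm{E}[x\nu_R^T] &= \mu\,\mathrm{E}[\nu_R]^T + P\,\mathrm{E}[\nu_R\nu_R^T].
\end{align*}
Once I establish that $\mathrm{E}[\nu_R]=0$ and that $\mathrm{E}[\nu_R\nu_R^T]$ is diagonal with the claimed entries, identities \eqref{eqn:MFGEx}--\eqref{eqn:MFGExfR} follow at once. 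Expanding $\hat\nu_R = QS - SQ^T$ componentwise (using that $S$ is diagonal) yields
\begin{equation*}
    (\nu_R)_i = s_j Q_{kj} - s_k Q_{jk}, \qquad (i,j,k) \text{ a cyclic permutation of } (1,2,3),
\end{equation*}
so the remaining work reduces to first and second moments of the off-diagonal entries of $Q\sim\mathcal{M}(S)$.

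The main obstacle, where the symmetry of the matrix Fisher with diagonal parameter does all the work, is to show $\mathrm{E}[Q_{jk}]=0$, $\mathrm{E}[Q_{jk}Q_{lm}]=0$ whenever $\{j,k\}\neq\{l,m\}$, and $\mathrm{E}[Q_{jk}^2]=\mathrm{E}[Q_{kj}^2]$. For the first two, I would invoke invariance of both the density $\propto\exp(s_1 Q_{11}+s_2 Q_{22}+s_3 Q_{33})$ and the bi-invariant Haar measure under the conjugations $Q\mapsto \mathcal{D}_i Q\mathcal{D}_i$ for $i=1,2,3$; since $\mathcal{D}_i\in\SO$, these preserve the distribution, and because they fix the diagonal of $Q$ while flipping selected pairs of off-diagonal entries, a sign-cancellation argument kills every off-diagonal first moment and every unbalanced off-diagonal quadratic cross moment. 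For the third, the transpose map $Q\mapsto Q^T$ leaves both $\mathrm{etr}(SQ^T)=\mathrm{etr}(SQ)$ and the Haar measure invariant, so $Q\stackrel{d}{=}Q^T$ and $\mathrm{E}[Q_{jk}^2]=\mathrm{E}[Q_{kj}^2]$.

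Substituting these facts into $(\nu_R)_i^2 = s_j^2 Q_{kj}^2 + s_k^2 Q_{jk}^2 - 2 s_j s_k Q_{jk}Q_{kj}$ yields \eqref{eqn:MFGEfRfR}. Applying the same conjugation argument to $(\nu_R)_i(\nu_R)_l$ for $i\neq l$ expands into four terms of the form $\mathrm{E}[Q_{ab}Q_{cd}]$ with $\{a,b\}\neq\{c,d\}$, each of which vanishes, so $\mathrm{E}[\nu_R\nu_R^T]$ is diagonal as claimed. The closed-form values of $\mathrm{E}[Q_{jk}^2]$ and $\mathrm{E}[Q_{jk}Q_{kj}]$ in terms of $c(S)$ and its partial derivatives are then quoted from \cite{lee2017bayesian}, completing the proof.
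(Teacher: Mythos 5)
Your proposal is correct and follows essentially the same route as the paper: exploit the conditional--marginal structure (Gaussian in $x$ given $R$, matrix Fisher marginal for $R$), integrate out $x$ via the tower property, and reduce everything to moments of $Q = U^TRV \sim \mathcal{M}(S)$. The only difference is one of detail: the paper gets $\mathrm{E}[\nu_R]=0$ directly from $(DS-SD^T)^\vee=0$ with $D=\mathrm{E}[Q]$ diagonal and defers the vanishing of the unbalanced cross moments of $Q$ to the cited reference, whereas you supply the sign-flip argument under conjugation by $\mathcal{D}_i$ and the transpose symmetry explicitly --- a welcome elaboration, but not a different proof.
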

\begin{proof}
	Equation \eqref{eqn:MFGER} follows immediately from the fact that the marginal distribution of $R$ is a matrix Fisher distribution with parameter $F$.
	Next, for \eqref{eqn:MFGEfR}, we have:
	\begin{align}
		\mathrm{E}[\nu_R] = \mathrm{E}[QS-SQ^T]^\vee = (DS-SD^T)^\vee = 0.
	\end{align}
	Also, for \eqref{eqn:MFGExx}, we can integrate $xx^T$ directly and get
    \begin{align}\label{eqn:ExxTranspose}
		&\mathrm{E}[xx^T] = \int_{\mathrm{SO}(3)}\int_{\mathbb{R}^n}xx^T p(R,x)\mathrm{d}x\mathrm{d}R \nonumber \\
		&= \frac{1}{c(F)}\int_{\mathrm{SO}(3)}[{\Sigma}_c+({\mu}+{P}\nu_R)({\mu}+{P}\nu_R)^T]\mathrm{etr}(FR^T)\mathrm{d}R \nonumber \\
		&= {\Sigma}_c+{\mu}{\mu}^T+{P}\mathrm{E}[\nu_R\nu_R^T]{P}^T.
	\end{align}
	The remaining \eqref{eqn:MFGEx}, \eqref{eqn:MFGExfR} and \eqref{eqn:MFGEfRfR} can be derived similarly by direct integration. 
\end{proof}

\subsection{Maximum Likelihood Estimator} \label{sec:MLE-MFG}

Here we consider the maximum likelihood estimation (MLE) problem to construct an MFG from its samples.
Given a set of samples $(R_i,x_i)_{i=1}^{N_s}$, the log-likelihood function of the parameters, after omitting some constants, is given by
\begin{align} \label{eqn:MFGLogLike}
l = &-\log(c(S))+\mathrm{tr}(F\bar{\mathrm{E}}[R]^T)-\frac{1}{2}\log(\mathrm{det}(\Sigma_c))\nonumber \\
&-\frac{1}{2}\bar{\mathrm{E}}[(x-\mu-P\nu_R)^T\Sigma_c^{-1}(x-\mu-P\nu_R)],
\end{align}
where $\bar{\mathrm{E}}[\cdot]$ represents the sample mean of a random variable, for example, $\bar{\mathrm{E}}[R] = \frac{1}{N_s}\sum_{i=1}^{N_s} R_i$.

As the log-likelihood function must be maximized jointly for the matrix Fisher part and the Gaussian part, it is challenging to obtain a closed form solution for $(\mu,\Sigma, V,S,U,P)$. 
From the construction of MFG, this is comparable to the MLE of a $(9+n)$-variate Gaussian distribution with prescribed linear constraints on the covariance matrix, which is already known as a challenging problem~\cite{zwiernik2017maximum}.

Instead of jointly maximizing the likelihood, we exploit the fact that the marginal distribution for $R$ is a matrix Fisher distribution, and the conditional distribution for $x|R$ is Gaussian. 
More specifically, the log-likelihood for the marginal distribution corresponds to the first two terms on the right hand side of \eqref{eqn:MFGLogLike}, and the marginal MLE for parameters $U,S,V$ is solved by the MLE of the matrix Fisher distribution.

\begin{theorem}[\cite{khatri1977mises,LeeITAC18}] \label{thm:MFMLE}
	The marginal maximum likelihood estimates for $U, V$ are given by the proper singular value decomposition $\expectbar{R}=UDV^T$, and the MLE for $S$ is given by solving \eqref{eqn:S2D} for $S$ using $D$.
\end{theorem}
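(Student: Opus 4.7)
The plan is to treat the marginal maximum likelihood problem as a two-stage optimization: first choose the orientations $U, V \in \SO$ that best align $F$ with the sample mean $\expectbar{R}$, and then choose the diagonal $S$ by a scalar first-order condition. This is possible because the marginal log-likelihood in \eqref{eqn:MFGLogLike} consists of only the two terms that depend on $(U,S,V)$, namely
\begin{equation*}
    l_{\mathrm{marg}}(U,S,V) = -\log c(S) + \tr{F \expectbar{R}^T} = -\log c(S) + \tr{S V^T \expectbar{R}^T U},
\end{equation*}
so the dependence on $(U,V)$ enters only through the bilinear trace term for any fixed $S$.

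First I would fix a diagonal $S$ with ordering $s_1 \geq s_2 \geq |s_3|$ consistent with Definition \ref{def:psvd1}, and maximize the trace over $U, V \in \SO$. I would compute the proper singular value decomposition of the sample mean, $\expectbar{R} = U^\star D V^{\star T}$, with $D = \diag{d_1, d_2, d_3}$ satisfying $d_1 \geq d_2 \geq |d_3|$. The classical von Neumann trace inequality over $\mathrm{O}(3)\times\mathrm{O}(3)$ then bounds $\tr{S V^T \expectbar{R}^T U}$ by $\sum_i s_i' d_i'$, where the primes denote the ordinary (nonnegative) singular values. The key step is to upgrade this to $\SO \times \SO$: because the proper convention absorbs any required sign flip exclusively into the third diagonal entry (allowing $s_3$ and $d_3$ to become negative), the attainable maximum over $\SO \times \SO$ is still $\tr{SD}$, and it is achieved at $U = U^\star$, $V = V^\star$. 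This gives the stated proper SVD as the marginal MLE for the orientation pair.

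Second, substituting $U^\star, V^\star$ back into $l_{\mathrm{marg}}$ reduces the remaining problem to a concave scalar optimization,
\begin{equation*}
    \max_{S} \left\{ -\log c(S) + \sum_{i=1}^{3} s_i d_i \right\},
\end{equation*}
so setting $\partial l_{\mathrm{marg}}/\partial s_i = 0$ yields $\tfrac{1}{c(S)}\tfrac{\partial c(S)}{\partial s_i} = d_i$ for $i = 1,2,3$, which is precisely \eqref{eqn:S2D}. Concavity of $-\log c(S)$ (inherited from the exponential-family log-partition structure of the matrix Fisher distribution) certifies that this critical point is the unique global maximum, so the MLE for $S$ is given by solving \eqref{eqn:S2D} with $d_i$ read off from the proper SVD of $\expectbar{R}$.

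The main obstacle is the $\SO$-constrained version of von Neumann's inequality: the unrestricted maximum over $\mathrm{O}(3)\times\mathrm{O}(3)$ may force $\det U$ or $\det V$ to equal $-1$, and the fix is precisely the sign absorption built into Definition \ref{def:psvd1}, combined with the compatibility of the orderings $s_1 \geq s_2 \geq |s_3|$ and $d_1 \geq d_2 \geq |d_3|$. Making the rearrangement argument rigorous on $\SO$, and verifying that no further sign change of $s_3$ (relative to $d_3$) can increase the trace, is where the real geometric content lies; the rest of the proof is a routine first-order condition for a strictly concave one-variable-per-coordinate problem.
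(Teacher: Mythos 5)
Your two-stage argument---first maximizing the bilinear trace over $U,V\in\SO$ via the $\SO$-restricted von Neumann inequality applied to the proper SVD of $\expectbar{R}$, then solving the first-order condition $\frac{1}{c(S)}\frac{\partial c(S)}{\partial s_i}=d_i$ of the concave problem in $S$---is correct and is essentially the standard proof of this result; the paper itself does not prove Theorem~\ref{thm:MFMLE} but imports it from the cited references, whose proofs follow exactly this route. The only place you stop short of a complete argument is the $\SO\times\SO$ rearrangement step (that $\tr{SV^T\expectbar{R}^TU}\le\tr{SD}$ under the orderings $s_1\ge s_2\ge|s_3|$ and $d_1\ge d_2\ge|d_3|$, with the determinant sign absorbed into the third singular value), which you correctly flag as the geometric crux and which is precisely what the cited references supply.
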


After obtaining $U,S,V$, they are used in the conditional log-likelihood for $x|R$ corresponding to the last two terms of the right hand side of \eqref{eqn:MFGLogLike}, and the corresponding conditional MLE is addressed as follows. 

\begin{theorem} \label{thm:MFGMLEAppro}
	Let $U,V\in\SO$ and $S\in\real{3\times 3}$ be the solution of the marginal MLE for $R$.
	Define $Q_i = U^TR_iV$, and $\nu_{R_i} = (Q_iS-SQ_i^T)^\vee$ for $i=1,\ldots,N_s$.
	Also define the following sample covariance matrices:
	\begin{align}
		&\overline{\mathrm{cov}}(x,x) = \bar{\mathrm{E}}(xx^T)-\bar{\mathrm{E}}[x]\bar{\mathrm{E}}[x]^T,\\
		&\overline{\mathrm{cov}}(x,\nu_R) = \bar{\mathrm{E}}[x\nu_R^T]-\bar{\mathrm{E}}[x]\bar{\mathrm{E}}[\nu_R]^T, \\
		&\overline{\mathrm{cov}}(\nu_R,\nu_R) = \bar{\mathrm{E}}[\nu_R\nu_R^T]-\bar{\mathrm{E}}[\nu_R]\bar{\mathrm{E}}[\nu_R]^T.
	\end{align}
	Then the solution of the conditional MLE for $P$, $\mu$, and $\Sigma$, is given by
	\begin{align}
		&P = \overline{\mathrm{cov}}(x,\nu_R)\overline{\mathrm{cov}}(\nu_R,\nu_R)^{-1} \label{eqn:MFGMLEP} \\
		&\mu = \bar{\mathrm{E}}[x]-P\bar{\mathrm{E}}[\nu_R] \label{eqn:MFGMLEMu} \\
		&\Sigma = \overline{\mathrm{cov}}(x,x)-P\overline{\mathrm{cov}}(x,\nu_R)^T+P(\tr{S}I_{3\times 3}-S)P^T \label{eqn:MFGMLESigma}.
	\end{align}
\end{theorem}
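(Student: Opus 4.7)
The plan is to treat this as a standard multivariate Gaussian regression problem after a convenient reparameterization. With $U, S, V$ (and hence the sample values $\nu_{R_i}$) fixed from the marginal MLE of Theorem \ref{thm:MFMLE}, the last two terms of the log-likelihood in \eqref{eqn:MFGLogLike} are exactly the conditional log-likelihood of $x\mid R$, which is Gaussian with mean $\mu + P\nu_R$ and covariance $\Sigma_c$. I would reparameterize by taking $(\mu, P, \Sigma_c)$ as the free variables instead of $(\mu, P, \Sigma)$, and recover $\Sigma = \Sigma_c + P(\tr{S}I - S)P^T$ only at the end. Since $\tr{S}I - S$ is positive semidefinite for any admissible $S$, this reparameterization is bijective between the feasible sets $\{\Sigma_c \succ 0\}$ and $\{\Sigma \succ P(\tr{S}I - S)P^T\}$.

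Under this reparameterization, the objective is precisely the log-likelihood of a linear Gaussian regression with intercept $\mu$, slope $P$, regressors $\nu_R$, responses $x$, and noise covariance $\Sigma_c$. Its MLE is textbook, and I would derive it by setting the three partial gradients to zero in sequence. First, $\partial l/\partial \mu = 0$ gives $\bar{\mathrm{E}}[\Sigma_c^{-1}(x - \mu - P\nu_R)] = 0$, which, after cancelling $\Sigma_c^{-1}$, reduces to \eqref{eqn:MFGMLEMu}: $\mu = \bar{\mathrm{E}}[x] - P\bar{\mathrm{E}}[\nu_R]$.

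Second, $\partial l/\partial P = 0$ yields the normal equation $\bar{\mathrm{E}}[(x - \mu - P\nu_R)\nu_R^T] = 0$. Substituting the formula just obtained for $\mu$ converts the products of sample means into sample covariances, yielding $\overline{\mathrm{cov}}(x,\nu_R) = P\,\overline{\mathrm{cov}}(\nu_R,\nu_R)$, which is \eqref{eqn:MFGMLEP}. Third, $\partial l/\partial \Sigma_c^{-1} = 0$ gives the familiar $\Sigma_c = \bar{\mathrm{E}}[(x - \mu - P\nu_R)(x - \mu - P\nu_R)^T]$. Expanding the square, reintroducing $\mu$, and using \eqref{eqn:MFGMLEP} to collapse the cross-term $P\,\overline{\mathrm{cov}}(\nu_R,\nu_R)P^T$ into $\overline{\mathrm{cov}}(x,\nu_R)P^T$ leaves $\Sigma_c = \overline{\mathrm{cov}}(x,x) - P\,\overline{\mathrm{cov}}(x,\nu_R)^T$. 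Adding $P(\tr{S}I - S)P^T$ to recover $\Sigma$ then produces \eqref{eqn:MFGMLESigma}.

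The main obstacle is not conceptual but bookkeeping: I have to verify that, after substituting the expression for $\mu$, all residual sample means vanish and the products of sample means combine cleanly with the raw second moments to yield precisely the sample covariances $\overline{\mathrm{cov}}(x,x)$, $\overline{\mathrm{cov}}(x,\nu_R)$, $\overline{\mathrm{cov}}(\nu_R,\nu_R)$ appearing in the statement. I would also briefly note that \eqref{eqn:MFGMLEP} requires $\overline{\mathrm{cov}}(\nu_R,\nu_R)$ to be invertible (generically true for sufficiently many samples spanning the three principal directions) and that the resulting $\Sigma_c$ inherits positive definiteness from the standard Gaussian regression theory, so the reparameterization back to $\Sigma$ remains valid.
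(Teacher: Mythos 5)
Your proposal is correct and follows essentially the same route as the paper: the paper also treats the last two terms of \eqref{eqn:MFGLogLike} as the conditional Gaussian log-likelihood with $U,S,V$ fixed, sets the partial derivatives with respect to $\mu$, $P$, and $\Sigma_c^{-1}$ to zero in exactly the order you describe, and back-substitutes to obtain \eqref{eqn:MFGMLEP}--\eqref{eqn:MFGMLESigma}. Your added remarks on the bijectivity of the $(\Sigma,\Sigma_c)$ reparameterization and the invertibility of $\overline{\mathrm{cov}}(\nu_R,\nu_R)$ are sensible extra care that the paper leaves implicit, but they do not change the argument.
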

\begin{proof}
	Take the derivatives of \eqref{eqn:MFGLogLike} with respect to ${\mu}$ and ${P}$ to obtain
	\begin{align*}
		&\frac{\partial l}{\partial\mu} = \Sigma_c^{-1}\bar{\mathrm{E}}\left[x-\mu-{P}\nu_{R}\right], \\
		&\frac{\partial l}{\partial P} = \Sigma_c^{-1}\bar{\mathrm{E}}\left[(x-{\mu})\nu_R^T-{P}\nu_R\nu_R^T\right].
	\end{align*}
	By setting the derivatives zero, the MLE of ${\mu}$ and ${P}$ can be obtained as in \eqref{eqn:MFGMLEMu} and \eqref{eqn:MFGMLEP}.
	Next, take the derivative of \eqref{eqn:MFGLogLike} with respect to $\Sigma_c^{-1}$ to have
	\begin{align*}
		\frac{\partial l}{\partial\Sigma_c^{-1}} = \frac{1}{2}{\Sigma}_c -\frac{1}{2}\bar{\mathrm{E}}\left[(x-\mu-P\nu_{R})
		(x-\mu-{P}\nu_{R})^T\right]. \nonumber
	\end{align*}
    Setting the derivative to zero and substituting \eqref{eqn:MFGMLEP} and \eqref{eqn:MFGMLEMu}, we obtain \eqref{eqn:MFGMLESigma}.
\end{proof}

The given marginal-conditional MLE is only an approximation to the joint MLE, because the information of $U$, $S$ and $V$ encoded in $x_i$ is  discarded over marginalization.
Intuitively, the correlation between $x$ and $\mathrm{vec}(R^T)$ indicated by the samples is not necessarily constrained in the tangent space at $UV^T$ calculated from $\bar{\mathrm{E}}[R]$ as required by the MFG in Theorem \ref{thm:conditioning}.

To understand how well the marginal-conditional MLE approximates the joint MLE, we perform the following analysis to compare the information that $R$ carries about the unknown parameter $S$, with that of $x|R$.
We focus on the specific case when the dimension of the linear part is one, i.e., $n=1$.
Define a metric  $\lambda_{s_i}\in\real{1}$ as
\begin{equation}
	\lambda_{s_i} = \frac{g_{s_is_i}(R)}{g_{s_is_i}(x|R)},
\end{equation}
where $g_{s_is_i}(R)$ is the diagonal element of the Fisher information matrix for the marginal distribution $p(R)$ with respect to $s_i$.
Similarly, $g_{s_is_i}(x|R)$ is for the conditional distribution $p(x|R)$~\cite[Chapter 9.8]{kullback1997information}.
The quantity $\lambda_{s_i}$ indicates the ratio of amount of information of the concentration parameter $s_i$ contained in $R$, versus in $x$, due to the fact that $g_{s_is_i}(R,x) = g_{s_is_i}(R)+g_{s_is_i}(x|R)$.
The higher $\lambda_{s_i}$ is, the less information is discarded in the marginal MLE for parameter $s_i$.

\begin{proposition} \label{prop:InformationRatio}
    Suppose $(x,R)\in\real{1}\times\SO$ follows an MFG with parameters $(\mu,\sigma^2,P,U,sI,V)$ of appropriate dimensions, where $P = \frac{\rho\sigma}{\sqrt{2s}}\begin{bmatrix} 1 & 1 & 1 \end{bmatrix}$ for $\rho\in\real{1}$. Then,
	\begin{equation} \label{eqn:InformationRatio}
		\lambda_{s_i} = \frac{1-3\rho^2}{\rho^2} \frac{2s^2\left(\partial_{11}c(S) - (\partial_1c(S))^2/c(S)\right)}{\partial_1c(S) + s\partial_{11}c(S) - s\partial_{12}c(S)},
	\end{equation}
	where $\partial_i c(S) = \frac{\partial c(S)}{\partial s_i} \big|_{S=sI}$ and $\partial_{ij} c(S) = \frac{\partial^2 c(S)}{\partial s_i \partial s_j} \big|_{S = sI}$.
\end{proposition}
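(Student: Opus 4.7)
The plan is to exploit the factorization $p(R,x) = p(R)\,p(x|R)$, which yields the additive decomposition $g_{s_is_i}(R,x) = g_{s_is_i}(R) + g_{s_is_i}(x|R)$, and to compute the two pieces separately. For the marginal, I rewrite $\log p(R) = -\log c(S) + \tr{SQ^T}$ with $Q = U^TRV$, so that $\partial_{s_i}\log p(R) = -\partial_i c/c + Q_{ii}$. Differentiating $c(S) = \int_{\SO}\etr{SQ^T}\diff{Q}$ under the integral sign gives $\expect{Q_{ii}} = \partial_i c/c$ and $\expect{Q_{ii}^2} = \partial_{ii}c/c$, so $g_{s_is_i}(R) = \partial_{ii}c/c - (\partial_i c/c)^2$. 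Evaluating at $S = sI$ and multiplying by $2s^2 c(S)$ reproduces exactly the numerator of~\eqref{eqn:InformationRatio}.

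For the conditional piece I compute the Fisher information for the Gaussian family $p(x|R) = \mathcal{N}(\mu_c,\Sigma_c)$ treating $\Sigma_c$ as an auxiliary parameter, so that $s_i$ enters only through $\mu_c = \mu + P\nu_R$; this gives $g_{s_is_i}(x|R) = \expect{(\partial_{s_i}\mu_c)^2}/\Sigma_c$. Direct differentiation of $\nu_R = (QS - SQ^T)^\vee$ yields $\partial_{s_1}\nu_R = (0,-Q_{31},Q_{21})^T$, with cyclic analogues for $i=2,3$, so with $P = p_0[1,1,1]$ and $p_0 = \rho\sigma/\sqrt{2s}$ one has $\partial_{s_1}\mu_c = p_0(Q_{21} - Q_{31})$. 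The symmetry of $\mathcal{M}(sI)$ under $Q\mapsto\mathcal{D}Q\mathcal{D}$ for every diagonal $\pm 1$ matrix $\mathcal{D}$ (which preserves $\tr{Q}$) kills all ``mismatched'' off-diagonal cross moments, in particular $\expect{Q_{21}Q_{31}}=0$, and forces $\expect{Q_{jk}^2}=a$ to be a common constant for all $j\neq k$. Hence $\expect{(\partial_{s_1}\mu_c)^2} = 2p_0^2 a = \rho^2\sigma^2 a/s$, and with $\Sigma_c = \sigma^2(1-3\rho^2)$ one obtains $g_{s_is_i}(x|R) = \rho^2 a/[s(1-3\rho^2)]$.

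What remains is to express $a$ in the specific form dictated by the denominator of~\eqref{eqn:InformationRatio}. The column-unit-norm identity $\sum_j Q_{ji}^2 = 1$ averaged immediately gives $2a = 1 - \partial_{11}c/c$, but matching the claim also requires the further identity $\partial_1 c = s\,(c - 2\partial_{11}c + \partial_{12}c)$ at $S = sI$; I expect this to be the main technical obstacle. I would establish it through the Rodrigues parametrization $Q = \cos\theta\,I + (1-\cos\theta)nn^T + \sin\theta\,\hat n$: at $S = sI$ the axis $n\in\Sph^2$ is uniform and independent of $\theta$, so every moment of interest becomes a polynomial in $m_k := \expect{\cos^k\theta}$ and the known uniform-sphere moments of $n$. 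Both sides of the target identity then reduce to linear expressions in $(m_1,m_2)$, and the identity collapses to the one-dimensional relation $1 + 2m_1 = 2s(1-m_2)$, which follows from integration by parts in $\theta$ against the marginal density proportional to $e^{2s\cos\theta}(1-\cos\theta)$ (equivalently, from the Bessel recursion $I_1(z) = (z/2)(I_0(z) - I_2(z))$ at $z = 2s$). The combined identity $2sc\,a = \partial_1 c + s\partial_{11}c - s\partial_{12}c$ then converts $\lambda_{s_i} = g_{s_is_i}(R)/g_{s_is_i}(x|R)$ into exactly~\eqref{eqn:InformationRatio}.
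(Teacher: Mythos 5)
Your proposal is correct, and its skeleton is the same as the paper's: the additive decomposition $g_{s_is_i}(R,x)=g_{s_is_i}(R)+g_{s_is_i}(x|R)$, the marginal information $\partial_{ii}c/c-(\partial_ic/c)^2$ obtained from the score $Q_{ii}-\partial_ic/c$, and the conditional information as the quadratic form $\expect{\partial_{s_i}\nu_R^T P^T\Sigma_c^{-1}P\,\partial_{s_i}\nu_R}$ with $\partial_{s_1}\nu_R=(0,-Q_{31},Q_{21})^T$ and $\Sigma_c=\sigma^2(1-3\rho^2)$ (both you and the paper hold $\Sigma_c$ rather than $\Sigma$ fixed, so the $\partial_{s_i}\Sigma_c$ contribution is consistently absent in both treatments).

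The one place you genuinely diverge is the final conversion of $\expect{Q_{21}^2}+\expect{Q_{31}^2}$ into derivatives of $c$. The paper simply invokes the closed-form second-moment formulas for the matrix Fisher distribution from its reference \cite{lee2017bayesian} (already cited in Theorem~\ref{thm:MFGMoments}), which at $S=sI$ give $\expect{Q_{jk}^2}=\bigl(\partial_1c+s\partial_{11}c-s\partial_{12}c\bigr)/(2sc)$ as a limit of the generic-$S$ expressions. You instead re-derive exactly this via the column-norm constraint $\sum_jQ_{j1}^2=1$ together with the identity $\partial_1c=s(c-2\partial_{11}c+\partial_{12}c)$ at $S=sI$; I checked that your axis-angle reduction of the latter to $1+2m_1=2s(1-m_2)$ is correct (both sides reduce to the stated linear combinations of $m_1,m_2$ using $\expect{n_1^4}=1/5$, $\expect{n_1^2n_2^2}=1/15$), and that relation follows in one step from $\int_0^\pi\frac{d}{d\theta}\bigl[e^{2s\cos\theta}\sin\theta(1-\cos\theta)\bigr]d\theta=0$. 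Your symmetry argument $Q\mapsto\mathcal{D}Q\mathcal{D}$ killing $\expect{Q_{21}Q_{31}}$ is also valid and makes explicit a cancellation the paper uses silently. The net effect is a self-contained proof of the moment identity in place of an external citation; nothing is missing.
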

\begin{proof}
	By Chapter 2.6 in \cite{kullback1997information}, we have
	\begin{align*}
		&g_{s_is_i}(R,x) = -\expect{\frac{\partial^2 \log p(R,x)}{\partial s_i^2}} \\
		= &\frac{\partial_{ii}c(S)}{c(S)} - \frac{(\partial_i c(s))^2}{c(S)^2} + \expect{\frac{\partial \nu_R^T}{\partial s_i}P^T\Sigma_c^{-1}P\frac{\partial \nu_R}{\partial s_i}},
	\end{align*}
	where the first two terms are the marginal information $g_{s_is_i}(R)$, and the last expectation is the conditional information $g_{s_is_i}(x|R)$.
	Substitute $n=1$, $\Sigma = \sigma^2$, $S = sI$, and $P = \frac{\rho\sigma}{\sqrt{2s}}\begin{bmatrix} 1 & 1 & 1 \end{bmatrix}$ into the conditional information,
	\begin{align*}
		g_{s_is_i}(x|R) &= \frac{\rho^2}{1-3\rho^2} \frac{\expect{Q_{ji}^2}+\expect{Q_{ki}^2}}{2s} \\
		&= \frac{\rho^2}{1-3\rho^2} \frac{\partial_1c(S) + s\partial_{11}c(S) - s\partial_{12}c(S)}{2s^2c(S)}.
	\end{align*}
	And \eqref{eqn:InformationRatio} follows from the above two equations.
\end{proof}

In Proposition \ref{prop:InformationRatio}, $\rho$ can be interpreted as the \textit{correlation coefficient} between $R$ and $x$.
It is clearly seen from \eqref{eqn:InformationRatio} when $\rho$ is close to zero, i.e., when the correlation between $R$ and $x$ is weak, the information of the matrix Fisher part $s_i$ is mainly contained in $R$.
Therefore, the marginal-conditional MLE is close to the joint MLE. 
Next, we examine the effect of concentration level of the attitude. 
Let $r(s)$ be the second fraction term on the right hand side of \eqref{eqn:InformationRatio}, whose value is illustrated in Fig. \ref{fig:InformationRatio} for varying $s$.
This indicates when the marginal attitude distribution is close to uniform, i.e., $s\rightarrow 0$, relatively more information of $s_i$ is carried by $x$.
On the other hands, when the attitude is more concentrated, say $s > 6$, the fraction $r(s)$ does not vary much, and $\lambda_{s_i}$ is mainly determined by the level of correlation at the first part of the right hand side of \eqref{eqn:InformationRatio}.

The proposed closed form solution for MLE is essential for designing and implementing an attitude estimator based on MFG, as it is inevitably used in each step of uncertainty propagation and measurement update, which typically runs at more than $100\,\mathrm{Hz}$.
It is not feasible to solve the joint MLE numerically at every step for real-time implementations.

\begin{figure}
	\includegraphics{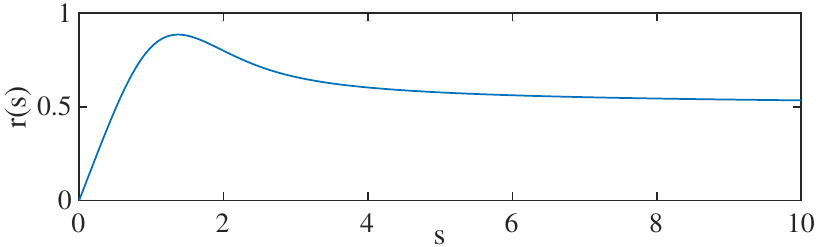}
	\caption{The graph of $r(s) = \frac{2s^2\left(\partial_{11}c(S) - (\partial_1c(S))^2/c(S)\right)}{\partial_1c(S) + s\partial_{11}c(S) - s\partial_{12}c(S)}$ against $s$. \label{fig:InformationRatio}}
\end{figure}

\section{Bayesian Estimation for Attitude and Gyroscope Bias}

In this section, we apply the proposed MFG in the attitude estimation with a time-varying gyro bias.
We design a Bayesian estimator composed of prediction and correction. 
The  prediction step is to propagate the MFG describing the uncertainty of attitude and gyroscope bias along the stochastic differential equation describing the attitude kinematics. 
This is accomplished in two different methods: (i) an analytical scheme by matching a new MFG to the propagated density; and (ii) a sampling-based scheme by using the unscented transform.
In the correction step, we consider two types of measurements, namely attitude and vector measurements.
It is shown that both types lead to the same form of posterior density, which is then matched to a new MFG.

In propagating the uncertainty of attitude and bias, the following kinematics model \cite{LeeITAC18,markley2006attitude} is considered
\begin{gather}
	R^T\diff{R} = (\hat{x}+\hat{\Omega})\diff{t} + (H_udW_u)^\wedge, \label{eqn:SDEAttitude} \\
	\diff{x} = H_vdW_v \label{eqn:SDEBias},
\end{gather}
where $R$ and $x$ are the attitude and bias of the gyroscope, $\Omega\in\real{3}$ is the  angular velocity measured by a gyroscope that is resolved in the body-fixed frame. 
Next, $W_u$ and $W_v$ are two independent 3-dimensional Wiener processes, and $H_u, H_v \in \real{3 \times 3}$ are two matrices describing the strengths of noises.
The angular velocity measurement has two sources of noise: the bias term $x$ and the Gaussian white noise contributed by $H_u dW_u$.
The bias is slowly varying while driven by another white noise $H_v dW_v$. 

The stochastic differential equation \eqref{eqn:SDEAttitude} is interpreted in the Stratonovich sense to guarantee the process does not leave $\SO$ \cite{markley2006attitude,barrau2018stochastic}.
Let the time be discretized by a sequence $\{t_0,t_1,\ldots\}$. 
For convenience, it is assumed that the time step $h\in\real{1}$ is fixed, i.e., $h= t_{k+1} - t_k$. 
According to \cite{barrau2018stochastic}, the kinematics model can be discretized into
\begin{gather}
    R_{k+1} = R_k \exp\left\{ h (\hat{\Omega}_k+\hat{x}_k) + (H_u\Delta W_u)^\wedge \right\}, \label{eqn:DistSDEAttitude} \\
    x_{k+1} = x_k + H_v\Delta W_v,  \label{eqn:DistSDEBias}
\end{gather}
where $\Delta W_u, \Delta W_v$ are the stochastic increments of the Wiener processes over a time step, which are Gaussian with
\begin{equation}\label{eqn:HDeltaW_Gaussian}
	H_u\Delta W_u \sim \mathcal{N}(0,hG_u), \quad H_v\Delta W_v \sim \mathcal{N}(0,hG_v),
\end{equation}
where $G_u = H_uH_u^T$ and $G_v = H_vH_v^T\in\real{3\times 3}$.

The initial attitude and bias $(R(t_0),x(t_0))$ at $t_0$ are assumed to follow an MFG of $n=3$, with the parameters $(\mu_0, \Sigma_0, P_0, U_0, S_0, V_0)$.
We wish to propagate it through the discretized equations \eqref{eqn:DistSDEAttitude} and \eqref{eqn:DistSDEBias} to construct the propagated MFG. 
The evolution of a probability density over a stochastic differential equation is governed by the Fokker-Planck equation, and in general, the propagated density is not necessarily MFG.
This is addressed by calculating the moments of the propagated density, and constructing the corresponding MFG from the solution of MLE presented in Section \ref{sec:MLE-MFG}.
Depending on how the moments of the propagated density are constructed, we present two methods: an analytical method and an unscented method. 

\subsection{Analytical Uncertainty Propagation} \label{sec:analyticalProp}

Suppose $(R_k,x_k)\sim\mathcal{MG}(\mu_k, \Sigma_k, P_k, U_k, S_k, V_k)$. 
In this subsection, we present an approach to construct a new MFG corresponding to the propagated density of $(R_{k+1},x_{k+1})$ by calculating its moments analytically.

First, the exponent in \eqref{eqn:DistSDEAttitude} can be decomposed into
\begin{align*}
    \{h(\Omega_k + \mu_k)\} + \{h(x_k-\mu_k) + H_u \Delta W_u\},
\end{align*}
after taking the hat map off. 
The first part is deterministic, and the second part is a random variable with the zero mean.  
This leads to the following approximation to \eqref{eqn:DistSDEAttitude}.

\begin{lemma}
	Equation \eqref{eqn:DistSDEAttitude} is almost surely equivalent to
	\begin{align} \label{eqn:R_{k+1}factorization}
		R_{k+1} = & R_ke^{h(\hat{x}_k-\hat{\mu}_k) + (H_u\Delta W_u)^\wedge + o(h)} e^{h(\hat{\Omega}_k+\hat{\mu}_k)}.
	\end{align}
\end{lemma}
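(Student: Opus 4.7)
My plan is to apply the Baker--Campbell--Hausdorff (BCH) formula to refactor the single exponential in \eqref{eqn:DistSDEAttitude} as a product of two exponentials, peeling off the deterministic mean-shift $h(\hat{\Omega}_k+\hat{\mu}_k)$ on the right and leaving the remaining zero-mean stochastic piece $h(\hat{x}_k-\hat{\mu}_k)+(H_u\Delta W_u)^\wedge$ on the left, with a provably $o(h)$ remainder.

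First I would record the orders of magnitude of the ingredients. From \eqref{eqn:HDeltaW_Gaussian}, $(H_u\Delta W_u)^\wedge$ is Gaussian with covariance of order $h$, hence almost surely (and in every $L^p$) of size $O(\sqrt{h})$, whereas both $h(\hat{\Omega}_k+\hat{x}_k)$ and $B:=h(\hat{\Omega}_k+\hat{\mu}_k)$ are $O(h)$. For $h$ small enough, the total exponent on the right-hand side of \eqref{eqn:DistSDEAttitude} and the element $B$ both lie in a neighborhood of $0\in\so$ on which $\exp$ is a diffeomorphism, so there exists a unique $A\in\so$ with $\exp\{h(\hat{\Omega}_k+\hat{x}_k)+(H_u\Delta W_u)^\wedge\}=e^A e^B$. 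Applying BCH,
\begin{equation*}
  e^A e^B = \exp\bigl\{A+B+\tfrac{1}{2}[A,B]+\tfrac{1}{12}\bigl([A,[A,B]]-[B,[A,B]]\bigr)+\cdots\bigr\},
\end{equation*}
and equating exponents gives $A = h(\hat{x}_k-\hat{\mu}_k)+(H_u\Delta W_u)^\wedge-\tfrac{1}{2}[A,B]-\cdots$. A Picard iteration starting from the obvious first guess $A^{(0)}=h(\hat{x}_k-\hat{\mu}_k)+(H_u\Delta W_u)^\wedge$ then produces $A = A^{(0)}+\epsilon$ with $\epsilon$ equal to the sum of the commutator corrections.

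Finally I would bound $\epsilon$. The leading term satisfies $\|[A,B]\|\lesssim \|A\|\cdot\|B\|=O(\sqrt{h})\cdot O(h)=O(h^{3/2})$; each further nested bracket introduces another factor of $\|A\|$ or $\|B\|$, producing only smaller orders. Since $h^{3/2}=o(h)$, we conclude $\epsilon=o(h)$ almost surely, and substitution into $R_{k+1}=R_k e^A e^B$ yields \eqref{eqn:R_{k+1}factorization}.

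The main obstacle is the mixed scaling: $\Delta W_u$ contributes at order $\sqrt{h}$ rather than $h$, so one must check that no cross-bracket between $A$ and $B$ lands exactly at order $h$. This is the reason the identity $[B,B]=0$ is essential — it rules out the only way a pure-$h$ contribution from $B$ alone could collide with the target order — and why the smallest surviving correction is $\sqrt{h}\cdot h=h^{3/2}$. The remaining issues (well-posedness of the matrix logarithm, convergence of BCH) are automatic once $h$ is small, since the Wiener increment is Gaussian and so lies in a small neighborhood of $0$ with overwhelming probability.
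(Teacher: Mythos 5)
Your proposal is correct and follows essentially the same route as the paper: the paper inserts the identity $e^{-h(\hat{\Omega}_k+\hat{\mu}_k)}e^{h(\hat{\Omega}_k+\hat{\mu}_k)}$ and applies BCH forward to the bracketed product $e^{X}e^{-B}$, which is exactly the explicit form of the implicit equation you solve by Picard iteration, and both arguments conclude by observing that every BCH correction carries at least a factor $\|A\|\cdot\|B\|=O(h^{3/2})$ (the paper's ``terms of order at least $h^2$ and $h\Delta W_u$''), hence is $o(h)$ almost surely. The one quibble is your aside crediting $[B,B]=0$ with preventing an order-$h$ collision; the relevant structural fact is that BCH terms beyond $A+B$ always mix $A$ and $B$ (in particular the pure-$A$ bracket, which would contribute the dangerous Wiener-squared $O(h)$ term, vanishes), but this misattribution does not affect the validity of your argument.
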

\begin{proof}
    Equation \eqref{eqn:DistSDEAttitude} is rewritten into 
	\begin{align*} 
		R_{k+1} &= R_k \left[ e^{h(\hat{\Omega}_k+\hat{x}_k) + (H_u\Delta W_u)^\wedge} 
		 e^{-h(\hat{\Omega}_k+\hat{\mu}_k)} \right] e^{h(\hat{\Omega}_k+\hat{\mu}_k)}.
	\end{align*}
    The Baker-Campbell-Hausdorff (BCH) formula \cite{hall2015lie} provides the solution for $Z$ to the equation $e^X e^Y = e^Z$ for given $X,Y$. 
   Applying this to the expression in the square brackets,  
	\begin{align} \label{eqn:R_{k+1}factorizationProof}
		R_{k+1} &= R_k e^{h(\hat{x}_k-\hat{\mu}_k) + (H_u\Delta W_u)^\wedge + A} e^{h(\hat{\Omega}_k+\hat{\mu}_k)},
	\end{align}
    where the additional term $A\in\so$ is composed of terms of order at least $h^2$ and $h\Delta W_u$.
	Since $\lim\limits_{h \to 0}\Delta W_u=0$ almost surely, $A \sim o(h)$.
\end{proof}

This lemma is helpful in making use of the closed form expression for the exponential map $\exp:\so\to\SO$, (i.e., the Rodrigues rotation formula) for the deterministic parts of \eqref{eqn:DistSDEAttitude}.
The uncertainty of $R_{k+1}$ contributed by the noises is quantified by the centered stochastic part $h(\hat{x}_k-\hat{\mu}_k) + (H_u\Delta W_u)^\wedge$ with the zero mean.
Next, we present an expression for $\expect{R_{k+1}}$ to solve the marginal MLE. 

\begin{theorem} \label{thm:ER_{k+1}}
    \begin{align} \label{eqn:E(R_{k+1})}
        \expect{R_{k+1}} & = \left\{ \expect{R_k}\left( I + \frac{h}{2}(G_u-\tr{G_u}I) \right) \right. \nonumber \\
         &\quad \left. + hU_k\expect{Q_kV_k^T\widehat{P_k\nu_{R_k}}} \right\} e^{h(\hat{\Omega}_k+\hat{\mu}_k)} + O(h^2),
    \end{align}
    where $Q_k=U_k^TR_kV_k$, $\nu_{R_k} = (Q_kS_k-S_kQ_k^T)^\vee$.
\end{theorem}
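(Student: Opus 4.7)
The plan is to start from the factorization provided in the preceding lemma, namely
\begin{equation*}
    R_{k+1} = R_k\, e^{Z}\, e^{h(\hat{\Omega}_k+\hat{\mu}_k)},
\end{equation*}
where I would abbreviate $Z = h(\hat{x}_k-\hat{\mu}_k) + (H_u\Delta W_u)^\wedge + o(h)$. Since $e^{h(\hat{\Omega}_k+\hat{\mu}_k)}$ is deterministic, it factors out of the expectation, so the task reduces to computing $\expect{R_k e^Z}$ up to $O(h^2)$. I would then perform a Taylor expansion of the matrix exponential $e^Z = I + Z + \tfrac{1}{2}Z^2 + O(\|Z\|^3)$ and classify the terms by their stochastic order: writing $A = h(\hat{x}_k-\hat{\mu}_k)$, which is $O(h)$, and $B = (H_u\Delta W_u)^\wedge$, which is $O(\sqrt{h})$ by \eqref{eqn:HDeltaW_Gaussian}, one sees that only $I$, $A$, $B$, and $\tfrac{1}{2}B^2$ contribute at order $O(h)$ or lower after expectation, while $Z^3$-and-higher terms give moments of order $O(h^2)$ (using that odd moments of the Gaussian $\Delta W_u$ vanish).

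Next I would compute each surviving contribution to $\expect{R_k e^Z}$. The term $\expect{R_k B}$ vanishes because $\Delta W_u$ is independent of $R_k$ and has zero mean, and the cross terms $\expect{R_k AB}, \expect{R_k BA}$ vanish for the same reason. For the quadratic Wiener term I would apply the identity $\hat{y}^2 = yy^T - y^T y\, I$ with $y = H_u\Delta W_u$ and take expectation, yielding $\expect{B^2} = h(G_u - \tr{G_u}I)$, so by independence
\begin{equation*}
    \tfrac{1}{2}\expect{R_k B^2} = \tfrac{h}{2}\expect{R_k}(G_u - \tr{G_u}I).
\end{equation*}
For $\expect{R_k A}$, the key step is to use the MFG structure: conditioning on $R_k$, the definition gives $\expect{x_k \mid R_k} = \mu_k + P_k\nu_{R_k}$, hence by the tower property $\expect{R_k(\hat{x}_k - \hat{\mu}_k)} = \expect{R_k\,\widehat{P_k\nu_{R_k}}}$. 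Substituting the proper SVD factorization $R_k = U_k Q_k V_k^T$ then produces exactly the term $h\,U_k\expect{Q_k V_k^T \widehat{P_k\nu_{R_k}}}$ appearing in \eqref{eqn:E(R_{k+1})}.

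Collecting, right-multiplying by $e^{h(\hat{\Omega}_k+\hat{\mu}_k)}$, and tracking the leftover terms yields the claimed expression. The main obstacle, in my view, is carefully justifying the $O(h^2)$ remainder: one must verify that (i) the $o(h)$ BCH correction buried inside the exponent contributes at most $O(h^2)$ once expanded and averaged, using that its stochastic part is proportional to $h\Delta W_u$ with vanishing mean, and (ii) the omitted $Z^3$ and $Z^4$ terms in the Taylor series produce expectations bounded by $O(h^2)$ because the only surviving even-order Wiener moments satisfy $\expect{B^4} = O(h^2)$ and the mixed $AB^2$-type terms are $O(h^2)$ by independence. The remaining manipulations are routine bookkeeping of first- and second-order expansions.
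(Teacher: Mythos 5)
Your proposal is correct and follows essentially the same route as the paper's own proof: expand the stochastic exponential, discard terms whose expectations are $O(h^2)$ using the vanishing odd Gaussian moments and the order of the BCH remainder, evaluate $\expect{R_k(\hat{x}_k-\hat{\mu}_k)}$ via the MFG conditional mean, and evaluate the quadratic Wiener term via $\hat{y}^2=yy^T-y^TyI$ and independence. The only cosmetic difference is that you invoke the tower property explicitly where the paper simply cites the MFG structure.
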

\begin{proof}
	See Appendix \ref{appendix:moments}.
\end{proof}

With the given $\expect{R_{k+1}}$, the marginal MLE of the attitude part of MFG can be solved as discussed in Section \ref{sec:MLE-MFG}, which yields the estimates of $U_{k+1}$, $S_{k+1}$ and $V_{k+1}$.
We define $Q_{k+1} = U_{k+1}^TR_{k+1}V_{k+1}$ and $\nu_{R_{k+1}} = (Q_{k+1}S_{k+1}-S_{k+1}Q_{k+1}^T)^\vee$ as the intermediate parameters for the MFG at time $t_{k+1}$.
Then the conditional MLE for the rest of parameters are solved as in Theorem \ref{thm:MFGMLEAppro} with the moments given as follows.

\begin{theorem} \label{thm:otherMoments}
    Let $\tilde U,\tilde V, \tilde{\tilde{V}} \in\SO$ and $\tilde S, \tilde{\tilde S} \in\real{3\times 3}$ be
    \begin{gather*}
        \tilde{U} = U_{k+1}^T U_k, \quad 
        \tilde{V} = V_{k+1}^T e^{-h(\hat{\Omega}_k+\hat{\mu}_k)} V_k, \quad
        \tilde{S} = \tilde{U}^T S_{k+1} \tilde{V},\\
        \tilde{\tilde{V}} = V_{k+1}^T e^{-h(\hat{\Omega}_k+\hat{\mu})}G_u^TV_k, \quad
        \tilde{\tilde{S}} = \tilde{U}^T S_k \tilde{\tilde{V}}.
    \end{gather*}
    Also, let $\tilde{\nu}_R, \tilde{\tilde\nu}_R \in\real{3}$ and $\Gamma_Q \in\real{3\times 3}$ be
    \begin{gather*}
        \tilde{\nu}_R = (Q_k \tilde{S}^T - \tilde{S} Q_k^T)^\vee, \quad
        \tilde{\tilde{\nu}}_R = (Q_k \tilde{\tilde{S}}^T-\tilde{\tilde{S}} Q_k^T)^\vee,\\
        \Gamma_Q = \left( \tr{Q_k\tilde{S}^T}I - Q_k\tilde{S}^T \right)Q_k.
    \end{gather*}
    Then, the moments of $x_{k+1}$ and $\nu_{R_{k+1}}$ required for the conditional MLE are given by
    \begin{align}
        &\expect{x_{k+1}}  = \mu_k, \label{eqn:Ex_{k+1}} \\
        &\expect{\nu_{R_{k+1}}}  = 0, \label{eqn:EvR_{k+1}} \\
        &\expect{x_{k+1}x_{k+1}^T}  = \expect{x_kx_k^T} + hG_v, \label{eqn:Exx_{k+1}}
    \end{align}
    \begin{align}
        &\expect{x_{k+1}\nu_{R_{k+1}}^T}  = h\Sigma_{c_k}V_k \expect{\Gamma_Q^T}\tilde{U}^T + \mu_k \Big( \expect{\tilde{\nu}^T_R}  \nonumber \\
        &\quad + \frac{1}{2}h\expect{\tilde{\tilde{\nu}}_R^T} - \frac{1}{2}h\tr{G_u}\expect{\tilde{\nu}_R^T} + h\expect{\nu_R^T P_k^T V_k\Gamma_Q^T} \Big)\tilde{U}^T \nonumber \\
        &\quad + P_k \Big( \expect{\nu_{R_k} \tilde{\nu}_R^T} +  \frac{1}{2}h\expect{\nu_R\tilde{\tilde{\nu}}_R^T} - \frac{1}{2}h\tr{G_u} \expect{\nu_{R_k} \tilde{\nu}_R^T} \nonumber \\
        &\quad + h\expect{\nu_{R_k}\nu_{R_k}^T P_k^T V_k \Gamma_Q^T} \Big)\tilde{U}^T + O(h^2), \label{eqn:ExvR_{k+1}}\\
        &\expect{\nu_{R_{k+1}} \nu_{R_{k+1}}^T } = \tilde{U} \Big( \expect{\tilde{\nu}_R\tilde{\nu}^T_R} - h\tr{G_u} \expect{\tilde{\nu}_R\tilde{\nu}_R^T} \nonumber \\
        &\quad + \frac{1}{2}h\expect{\tilde{\nu}_R\tilde{\tilde{\nu}}^T_R} + \frac{1}{2}h\expect{\tilde{\tilde{\nu}}_R\tilde{\nu}^T_R} + h\expect{\tilde{\nu}_R\nu_{R_k}^T P_k^T V_k \Gamma_Q^T} \nonumber \\
        &\quad + h\expect{\tilde{\nu}_R\nu_{R_k}^T P_k^T V_k \Gamma_Q^T}^T + h\expect{\Gamma_Q V_k^T G_u V_k\Gamma_Q^T} \Big)\tilde{U}^T \nonumber \\
        &\quad + O(h^2).\label{eqn:EvRvR_{k+1}}
    \end{align}
\end{theorem}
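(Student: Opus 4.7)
The plan is to compute the five required moments separately. The first three, \eqref{eqn:Ex_{k+1}}, \eqref{eqn:EvR_{k+1}}, and \eqref{eqn:Exx_{k+1}}, are the easy ones. Since \eqref{eqn:DistSDEBias} gives $x_{k+1}=x_k+H_v\Delta W_v$ with $\Delta W_v$ zero-mean and independent of $(R_k,x_k,\Delta W_u)$, the identities \eqref{eqn:Ex_{k+1}} and \eqref{eqn:Exx_{k+1}} follow immediately from \eqref{eqn:MFGEx} and the covariance in \eqref{eqn:HDeltaW_Gaussian}. For \eqref{eqn:EvR_{k+1}}, once $U_{k+1}$ and $V_{k+1}$ have been extracted via the proper SVD of $\expect{R_{k+1}}$ from Theorem \ref{thm:ER_{k+1}}, the matrix $\expect{Q_{k+1}}=U_{k+1}^T\expect{R_{k+1}}V_{k+1}=D_{k+1}$ is diagonal, so $\expect{\nu_{R_{k+1}}}=(D_{k+1}S_{k+1}-S_{k+1}D_{k+1})^\vee=0$ since both factors are diagonal.

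The substantive work is in \eqref{eqn:ExvR_{k+1}} and \eqref{eqn:EvRvR_{k+1}}. The strategy is to substitute the factored form \eqref{eqn:R_{k+1}factorization} into $Q_{k+1}=U_{k+1}^TR_{k+1}V_{k+1}$ and then into $\nu_{R_{k+1}}$, expand the stochastic exponential as $e^X = I+X+\tfrac12 X^2+\cdots$, and retain only terms contributing up to order $h$. Because $\Delta W_u$ is of order $\sqrt{h}$ and $\expect{\Delta W_u\Delta W_u^T}=hI$, I must keep linear terms in $h(\hat x_k-\hat\mu_k)$ and in $(H_u\Delta W_u)^\wedge$, as well as the quadratic term $\tfrac12((H_u\Delta W_u)^\wedge)^2$, which is where the $G_u$-contributions originate. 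The auxiliary quantities $\tilde U,\tilde V,\tilde S$ and $\tilde{\tilde V},\tilde{\tilde S}$ are designed precisely to absorb the conjugation by $U_{k+1}$ and the deterministic factor $e^{h(\hat\Omega_k+\hat\mu_k)}$, while $\tilde\nu_R,\tilde{\tilde\nu}_R,\Gamma_Q$ package the resulting linear-in-$h$ contributions in terms of $Q_k$, $S_k$, and $S_{k+1}$.

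Expectations are then taken term by term. All odd powers of $\Delta W_u$ vanish by independence from $(R_k,x_k)$, and the quadratic term produces the $G_u$ factors via $\expect{\Delta W_u\Delta W_u^T}=hI$. For \eqref{eqn:ExvR_{k+1}}, I would exploit the MFG conditional-marginal representation from Theorem \ref{thm:conditioning}: writing $x_k=\mu_k+P_k\nu_{R_k}+(x_k-\mu_{c_k})$ gives $\expect{x_k\mid R_k}=\mu_{c_k}$ and $\expect{(x_k-\mu_{c_k})(x_k-\mu_{c_k})^T\mid R_k}=\Sigma_{c_k}$. The $\mu_k$- and $P_k$-grouped contributions are produced by the $\mu_{c_k}$ part, whereas the term $h\Sigma_{c_k}V_k\expect{\Gamma_Q^T}\tilde U^T$ arises from the coupling between the centered residual $(x_k-\mu_{c_k})$ and the $(\hat x_k-\hat\mu_k)$ piece inside the attitude exponential. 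The moment \eqref{eqn:EvRvR_{k+1}} is handled analogously: the first several expectations gather the cross terms between the deterministic propagation, the bias fluctuation, and the noise $\Delta W_u$, while $h\expect{\Gamma_QV_k^TG_uV_k\Gamma_Q^T}$ is exactly the contribution of the quadratic $\Delta W_u\Delta W_u^T$ term. Throughout, the identities from Section~\ref{sec:MF}, especially $\widehat{Rx}=R\hat xR^T$ and $(\hat xA+A^T\hat x)^\vee=(\tr{A}I-A)x$, together with cyclic trace manipulations, will be applied repeatedly to convert products of hat maps and singular-value matrices into the compact $\nu_R,\tilde\nu_R,\Gamma_Q$ notation.

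The main obstacle is the bookkeeping: many terms appear in the expansion and I must track which are $O(h)$, $O(\sqrt h)$, or $O(h^2)$, and verify that the $o(h)$ BCH remainder $A$ in \eqref{eqn:R_{k+1}factorization} cannot leak into the $O(h)$ level through cross multiplication with $\Delta W_u$ or with the bias fluctuation $(x_k-\mu_k)$. This follows because any such cross term is at least of order $h\Delta W_u$ or $h(x_k-\mu_k)$, so that after taking expectations one picks up an additional factor of $\sqrt h$ or passes to $O(h^2)$. A second delicate point is confirming that cross moments of the form $\expect{(x_k-\mu_k)\Delta W_u^T}$ truly vanish, which is ensured by the independence of $\Delta W_u$ from $(R_k,x_k)$.
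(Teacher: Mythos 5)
Your proposal is correct and follows essentially the same route as the paper's proof in Appendix B: the first three moments via independence of $\Delta W_v$ and the diagonality of $\expect{Q_{k+1}}$, and the two correlation moments by substituting the factorization \eqref{eqn:R_{k+1}factorization}, expanding the stochastic exponential to first order in $h$ (with the quadratic $\Delta W_u$ term supplying the $G_u$ contributions), and using the MFG conditional mean/covariance together with the hat--vee identities to package the results into $\tilde\nu_R$, $\tilde{\tilde\nu}_R$, and $\Gamma_Q$. Your handling of the $o(h)$ BCH remainder and the vanishing cross moments matches the paper's implicit order-counting argument.
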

\begin{proof}
	See Appendix \ref{appendix:moments}.
\end{proof}

\begin{remark}
	Note that $\nu_{R_k}$, $\tilde{\nu}_R$, $\tilde{\tilde{\nu}}_R$ are linear in $Q_k$, and by Lemma \ref{lemma:GammaQSimplified} in Appendix \ref{appendix:moments}, $\Gamma_Q$ is also linear in $Q_k$.
	Therefore, the expectations on the right hand side of equations \eqref{eqn:E(R_{k+1})}, \eqref{eqn:ExvR_{k+1}} and \eqref{eqn:EvRvR_{k+1}} can be calculated using the moments of $Q$ of the matrix Fisher distribution with the parameter $S_k$ up to the third order.
    More specifically, these moments can be expressed as linear combinations of $\expect{Q_{ij}}$, $\expect{Q_{ij}Q_{kl}}$, $\expect{Q_{ij}Q_{kl}Q_{mn}}$ for $i,j,k,l,m,n \in \{1,2,3\}$.
    The moments for $Q_k$ can be calculated using \eqref{eqn:S2D} and the method introduced in \cite{wang2020higher}.
\end{remark}

With these moments, the estimates for $(\mu_{k+1},\Sigma_{k+1},P_{k+1})$ can be constructed through the conditional MLE given in Theorem \ref{thm:MFGMLEAppro}. 

In summary, these provide an analytical approach to propagate $(R_k,x_k)\sim\mathcal{MG}(\mu_k, \Sigma_k, P_k, U_k, S_k, V_k)$ into $(R_{k+1},x_{k+1})\sim\mathcal{MG}(\mu_{k+1}, \Sigma_{k+1}, P_{k+1}, U_{k+1}, S_{k+1}, V_{k+1})$, whose moments match with the solution of \eqref{eqn:DistSDEAttitude} and \eqref{eqn:DistSDEBias} up to $O(h^2)$.
This is a typical procedure in assumed density filters, and its validity is guaranteed by the fact that the maximum likelihood estimator minimizes the Kullback–Leibler divergence from the MFG family to the true density \cite{white1982maximum}.
The pseudocode for this analytical uncertainty propagation scheme is shown in Table \ref{tab:AnalyticalUP}.

\begin{table}
    \caption{Analytical Uncertainty Propagation \label{tab:AnalyticalUP}}
    \begin{algorithmic}[1]
        \algrule[0.8pt]
        \Procedure{$\mathcal{MG}(t_{k+1}) = $ Analytical Propagation}{$\mathcal{MG}(t_k),\Omega_k$}
        \algrule
        \State Calculate $\expect{R_{k+1}}$ using \eqref{eqn:E(R_{k+1})}.
        \State Obtain $U_{k+1},S_{k+1},V_{k+1}$ according to Theorem \ref{thm:MFMLE} using $\expect{R_{k+1}}$.
        \State Calculate the moments in Theorem \ref{thm:otherMoments}.
        \State Obtain $\mu_{k+1},\Sigma_{k+1},P_{k+1}$ according to Theorem \ref{thm:MFGMLEAppro} using the moments calculated in Step 4.
        \State Set $\mathcal{MG}(t_{k+1}) = \mathcal{MG}(\mu_{k+1},\Sigma_{k+1},P_{k+1},U_{k+1},S_{k+1},V_{k+1})$.
        \EndProcedure
        \algrule[0.8pt]
    \end{algorithmic}
\end{table}

\subsection{Unscented Uncertainty Propagation}

In this subsection, we present an alternative sampling-based method to propagate MFG.
In contrast to the preceding analytical approach to propagate moments, the unscented transform selects so called sigma points from the distribution of $(R_k,x_k)$, which are propagated through \eqref{eqn:DistSDEAttitude} and \eqref{eqn:DistSDEBias}, and matched to a new MFG using the approximate MLE.
The sigma points are selected in a deterministic fashion to characterize the mean and dispersion of the distribution.
To formulate the unscented transform, we first introduce a canonical form of MFG by nullifying all the correlations and mean values.

\begin{theorem}
	Let $(R,x) \sim \mathcal{MG}(\mu,\Sigma,P,U,S,V)$. 
    Define $Q=U^TRV\in\SO$, $y=\Sigma_c^{-1/2}(x-\mu-P\nu_R)\in\real{n}$, then $(Q,y)$ follows $\mathcal{MG}(0,I,0,I,S,I)$.
\end{theorem}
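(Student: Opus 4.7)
The plan is to verify the claim by a direct change of variables, treating the attitude and linear parts separately and then combining them. Because $Q = U^T R V$ is a bi-translation on $\mathrm{SO}(3)$ and the Haar measure is bi-invariant, we have $\mathrm{d}R = \mathrm{d}Q$, so no Jacobian factor arises from the attitude substitution. Simultaneously, $R = U Q V^T$ gives
\begin{equation*}
    \mathrm{tr}(F R^T) = \mathrm{tr}(U S V^T V Q^T U^T) = \mathrm{tr}(S Q^T),
\end{equation*}
so the matrix Fisher factor in \eqref{eqn:MFGDensity} becomes $\mathrm{etr}(S Q^T)$, exactly the attitude kernel of $\mathcal{MG}(0,I,0,I,S,I)$.

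Next, I would observe that $\nu_R = (Q S - S Q^T)^\vee$ is already a function of $Q$ alone, hence $\mu_c = \mu + P \nu_R$ and $\Sigma_c$ are determined once $Q$ is fixed. For each fixed $Q$, the map $x \mapsto y = \Sigma_c^{-1/2}(x - \mu_c)$ is an invertible affine transformation on $\mathbb{R}^n$ with Jacobian $\det(\Sigma_c^{-1/2})$, so
\begin{equation*}
    \mathrm{d}x = \sqrt{\det(\Sigma_c)}\, \mathrm{d}y,
\end{equation*}
and the quadratic form in the Gaussian factor simplifies as
\begin{equation*}
    (x - \mu_c)^T \Sigma_c^{-1} (x - \mu_c) = y^T y.
\end{equation*}

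Combining the two substitutions, the pushforward density is
\begin{equation*}
    p(Q,y) = \sqrt{\det(\Sigma_c)} \cdot \frac{1}{c(S)\sqrt{(2\pi)^n \det(\Sigma_c)}} e^{-\frac{1}{2} y^T y}\, \mathrm{etr}(S Q^T),
\end{equation*}
which after cancelling $\sqrt{\det(\Sigma_c)}$ becomes
\begin{equation*}
    p(Q,y) = \frac{1}{c(S)\sqrt{(2\pi)^n}} e^{-\frac{1}{2} y^T y}\, \mathrm{etr}(S Q^T).
\end{equation*}
This is precisely the density \eqref{eqn:MFGDensity} evaluated at parameters $\mu = 0$, $\Sigma = I$, $P = 0$, $U = V = I$: with $P = 0$ the conditional mean reduces to $0$ and $\Sigma_c = I$, while $F = S$.

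The argument is essentially routine once the two substitutions are made in the correct order (the affine rescaling of $x$ depends on $Q$ through $\mu_c$, so the attitude variable must be changed first, or the two changes carried out jointly). The only place one needs to be careful is to invoke bi-invariance of the Haar measure to conclude $\mathrm{d}R = \mathrm{d}Q$; this is the sole nontrivial ingredient, and no genuine obstacle appears.
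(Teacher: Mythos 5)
Your proposal is correct and follows essentially the same route as the paper: a joint change of variables $(R,x)\mapsto(Q,y)$ using bi-invariance of the Haar measure (so $\mathrm{d}R=\mathrm{d}Q$) and the Jacobian $\det(\Sigma_c^{-1/2})$ from the affine map on $x$, which cancels the normalization and leaves the canonical density. The paper's proof is just a terser version of the same computation, so nothing further is needed.
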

\begin{proof}
	Let $\mathrm{d}R$ be the probability measure of the uniform distribution on \SO, i.e. $\int_{\SO}\mathrm{d}R=1$, and let $\mathrm{d}x$ denote the usual Lebesgue measure in $\mathbb{R}^n$.
	After change of variables, the product measure becomes $\mathrm{d}y\mathrm{d}{Q} = \mathrm{det}({\Sigma}_c^{-1/2})\mathrm{d}x\mathrm{d}R$.
    Substituting this and the expressions for $R,x$ in terms of $Q,y$ into \eqref{eqn:MFGDensity}, the probability measure of $({Q},y)$ becomes
	\begin{align*}
		p(R,x)\mathrm{d}x\mathrm{d}R =  \frac{1}{c(S)\sqrt{(2\pi)^n}}\exp(-\frac{yy^T}{2})\mathrm{etr}(S{Q}^T)\mathrm{d}y\mathrm{d}{Q},
	\end{align*}
	which completes the proof.
\end{proof}

In the above canonical form, the Gaussian part is decoupled from the matrix Fisher part. 
Therefore, the sigma points of the canonical MFG can be constructed by the union of the sigma points for the Gaussian distribution and those for the matrix Fisher distribution~\cite{LeeITAC18}.
These can be transformed back with the inverse of the canonical transform to become the sigma points of the original MFG.

\begin{definition} \label{def:MFGSigmaPoints}
	Consider $\mathcal{MG}({\mu},{\Sigma},{P},U,S,V)$.
	Define the $7+2n$ sigma points for its canonical distribution as
	\begin{gather}
		({Q},y)_{1,2} = \left(\exp(\pm\theta_1\hat{e}_1),[0,\ldots,0]^T\right), \nonumber \\
		({Q},y)_{3,4} = \left(\exp(\pm\theta_2\hat{e}_2),[0,\ldots,0]^T\right), \nonumber \\
		({Q},y)_{5,6} = \left(\exp(\pm\theta_3\hat{e}_3),[0,\ldots,0]^T\right), \nonumber \\
		({Q},y)_{7,8} = \left(I_{3\times3},\left[\pm\sqrt{\frac{n}{w_G}},0,\ldots,0\right]^T\right), \nonumber \\
		\vdots \nonumber \\
		({Q},y)_{5+2n,6+2n} = \left(I_{3\times3},\left[0,\ldots,0,\pm\sqrt{\frac{n}{w_G}}\right]^T\right), \nonumber \\
		({Q},y)_{7+2n} = \left(I_{3\times3},[0,\ldots,0]^T\right),
	\end{gather}
	where $w_G>0$ and $0\leq \theta_i\leq \pi$ is chosen according to
	\begin{subnumcases}{\label{eqn:MFUnscentedCos} \cos\theta_i=}
		\sigma + \dfrac{(1-\sigma)(\log c(S) - s_i)}{s_j+s_k},\;  \mbox{if $s_j+s_k\geq 1$},\label{eqn:costhetaia}\\
		\{\sigma + (1-\sigma)(\log c(S) - s_i)+\frac{1}{2}\}(s_j+s_k)\nonumber\\ -1/2, \quad \mbox{else if $0\leq s_j+s_k < 1$,}\label{eqn:costhetaib}
	\end{subnumcases}
	for $\{i,j,k\}=\{1,2,3\}$.
	The weights for the first three pairs of sigma points are given by
	\begin{align}
		w_i = \frac{1}{4(1-\cos\theta_i)}\left\{\frac{1}{c(S)}\left(\frac{\partial c(S)}{\partial s_i}-\frac{\partial c(S)}{\partial s_j}-\frac{\partial c(S)}{\partial s_k}\right)+1\right\}
	\end{align}
	for $i=1,2,3$, where $\sigma$ in \eqref{eqn:MFUnscentedCos} is chosen such that $2(w_1+w_2+w_3)=w_M$ for a $w_M>0$.
	The weights for the next $2n$ sigma points, namely from the $7$-th through $(6+2n)$-th sigma points are $\frac{w_G}{2n}$, and the weight for the last one is $w_0=1-w_M-w_G$.
	Let $R_i = U{Q}_iV^T$, and $x_i={\Sigma}_c^{1/2}y_i+{\mu}+{P}\nu_{R_i}$.
    The sigma points for $\mathcal{MG}({\mu},{\Sigma},{P},U,S,V)$ are defined as $\{(R,x)_i\}_{i=1}^{7+2n}$.
\end{definition}

In other words, each pair of sigma points are designed to capture the dispersion along each principal axis of MFG.
Parameters $w_M$, $w_G$ and $w_0$ are used to adjust the weights, respectively for the attitude, the linear components and the sigma point at identity.
However, because $\SO$ is compact, unlike the sigma points for a Gaussian distribution on $\real{n}$ which can extend arbitrarily faraway from the origin, the angle $\theta_i$ in Definition \ref{def:MFGSigmaPoints} is constrained by $\pi$.
Therefore, the weight at $I_{3\times3}$, i.e. $w_G+w_0$ cannot be too large if the attitude distribution is very dispersed, which makes \eqref{eqn:MFUnscentedCos} unsolvable since $\theta_i$ cannot be far enough to capture the large dispersion.
To deal with this problem, we note that $\cos\theta_i$ increases monotonically with $\sigma$, so we may set a lower bound for $\cos\theta_i$ denoted as $\cos\theta_l$, such that if the $\sigma$ solved from $w_M$ is smaller than that solved from $\min_{i=1,2,3}\{\cos\theta_i\} = \cos\theta_l$, we choose the latter to make $\theta_i\leq\theta_l$.
In implementation, $\cos\theta_l=-\sqrt{3}/2$ shows good numerical stability.

This selection of sigma points in Definition \ref{def:MFGSigmaPoints} is justified by the following theorem stating that the original MFG can be recovered when the MLE described in Section \ref{sec:MLE-MFG} is performed for the sigma points.
 
\begin{theorem}
	The marginal-conditional MLE of the sigma points given in Definition \ref{def:MFGSigmaPoints} are $({\mu},{\Sigma},{P},U,S,V)$.
\end{theorem}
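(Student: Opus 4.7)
The strategy is to feed the sigma points into the marginal and conditional MLE procedures of Theorems \ref{thm:MFMLE} and \ref{thm:MFGMLEAppro} and check that the outputs coincide with the original parameters $(\mu,\Sigma,P,U,S,V)$. A key structural feature I will exploit is that the sigma points are ``disjointly supported'' in the following sense: the attitude sigma points (indices $1$--$6$) have $y_i=0$, while the Gaussian sigma points and the centre (indices $7$ through $7+2n$) have $Q_i=I_{3\times3}$ and hence $\nu_{R_i}=0$. This makes all relevant sample moments factor cleanly.

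First I would handle the attitude part. Since $R_i = UQ_iV^T$, we have $\bar{\mathrm{E}}[R] = U\bar{\mathrm{E}}[Q]V^T$, so it suffices to show $\bar{\mathrm{E}}[Q] = D = \diag{d_1,d_2,d_3}$. For each paired pair of attitude sigma points, $\exp(\theta_i\hat{e}_i) + \exp(-\theta_i\hat{e}_i) = 2I_{3\times3} + 2(1-\cos\theta_i)\hat{e}_i^2$, and the remaining $1+2n$ sigma points (whose weights sum to $1-w_M$) all carry $Q_i = I_{3\times3}$. Because $\hat{e}_i^2$ is diagonal, $\bar{\mathrm{E}}[Q]$ is diagonal with entries
\begin{align*}
    \bigl(\bar{\mathrm{E}}[Q]\bigr)_{ii} = 2w_i + 2w_j\cos\theta_j + 2w_k\cos\theta_k + (1-w_M), \quad \{i,j,k\}=\{1,2,3\}.
\end{align*}
Substituting the weight formula from Definition \ref{def:MFGSigmaPoints}, applying the constraint $2(w_1+w_2+w_3)=w_M$, and using either branch of \eqref{eqn:MFUnscentedCos} to reduce the $\cos\theta_j, \cos\theta_k$ terms, straightforward algebra collapses the right-hand side to $d_i$. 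Then the proper SVD of $\bar{\mathrm{E}}[R]=UDV^T$ returns $U$ and $V$, and solving \eqref{eqn:S2D} for $D$ returns $S$. This algebraic verification is the main obstacle, since it is the only place where the specific dependence of $w_i, \theta_i$ on $s_1,s_2,s_3$ through \eqref{eqn:MFUnscentedCos} has to be unpacked; the remaining steps rely only on symmetry and the disjoint support.

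Next I would attack the conditional part. Using the identity $(\hat{x}A+A^T\hat{x})^\vee = (\tr{A}I_{3\times3}-A)x$ and the fact that the diagonal $S$ commutes with each $\hat{e}_i^2$, direct expansion of $Q_i=\exp(\pm\theta_i\hat{e}_i)$ yields
\begin{align*}
    \nu_{R_i} = (Q_iS - SQ_i^T)^\vee = \pm\sin\theta_i(s_j+s_k)e_i, \quad \{i,j,k\}=\{1,2,3\},
\end{align*}
for the six attitude sigma points (in opposite signs across each pair), and $\nu_{R_i}=0$ for the rest, so $\sum_i w_i\nu_{R_i}=0$. The antipodal Gaussian sigma points give $\sum_i w_i y_i = 0$ and $\sum_i w_iy_iy_i^T = I_n$ by the standard unscented computation. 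Combining these with $x_i = \Sigma_c^{1/2}y_i + \mu + P\nu_{R_i}$ and killing cross terms via the disjoint support, one reads off
\begin{align*}
    \bar{\mathrm{E}}[x] &= \mu, \qquad \bar{\mathrm{E}}[\nu_R] = 0,\\
    \bar{\mathrm{E}}[x\nu_R^T] &= P\,\bar{\mathrm{E}}[\nu_R\nu_R^T],\\
    \bar{\mathrm{E}}[xx^T] &= \Sigma_c + \mu\mu^T + P\,\bar{\mathrm{E}}[\nu_R\nu_R^T]\,P^T.
\end{align*}

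Finally, substituting these into \eqref{eqn:MFGMLEP}, \eqref{eqn:MFGMLEMu}, and \eqref{eqn:MFGMLESigma}, the first gives $P_{\mathrm{MLE}} = P\,\bar{\mathrm{E}}[\nu_R\nu_R^T]\,\bar{\mathrm{E}}[\nu_R\nu_R^T]^{-1} = P$; the second gives $\mu_{\mathrm{MLE}} = \mu$; and in the third the $P\,\bar{\mathrm{E}}[\nu_R\nu_R^T]\,P^T$ terms cancel against $-P\,\overline{\mathrm{cov}}(x,\nu_R)^T$ by the symmetry of $\bar{\mathrm{E}}[\nu_R\nu_R^T]$, leaving $\Sigma_{\mathrm{MLE}} = \Sigma_c + P(\tr{S}I_{3\times3}-S)P^T$, which equals $\Sigma$ by \eqref{eqn:MFGSigmac}. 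Notably the explicit value of $\bar{\mathrm{E}}[\nu_R\nu_R^T]$ is not needed, only its symmetry and invertibility; invertibility follows from its diagonal structure with entries $2w_i\sin^2\theta_i(s_j+s_k)^2>0$.
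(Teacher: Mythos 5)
Your proposal is correct and follows essentially the same route as the paper: compute the sample moments of the sigma points (using the fact that $y_i$ and $\nu_{R_i}$ are never simultaneously nonzero, so $\bar{\mathrm{E}}[y]=0$, $\bar{\mathrm{E}}[yy^T]=I_{n\times n}$, $\bar{\mathrm{E}}[\nu_R]=0$, $\bar{\mathrm{E}}[y\nu_R^T]=0$) and feed them into Theorems \ref{thm:MFMLE} and \ref{thm:MFGMLEAppro}. The only difference is that for the marginal attitude part the paper simply cites Theorem IV.1 of \cite{LeeITAC18}, whereas you verify $\bar{\mathrm{E}}[Q]=D$ directly; your verification goes through (and in fact needs only the weight formula together with $2(w_1+w_2+w_3)=w_M$ --- the explicit branches of \eqref{eqn:MFUnscentedCos} cancel out and are not actually required for the first-moment identity).
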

\begin{proof}
	The marginal MLE for $U$, $S$ and $V$ from the sigma points has been shown in Theorem IV.I~\cite{LeeITAC18}.
    Since $\bar{\mathrm{E}}[y]=0$, $\bar{\mathrm{E}}[yy^T]=I_{n\times n}$, $\bar{\mathrm{E}}[\nu_R]=0$, and $\bar{\mathrm{E}}[y\nu_R^T]=0$, we have
	\begin{align}
	&\overline{\mathrm{cov}}(x,\nu_R) = {P}\overline{\mathrm{cov}}(\nu_R,\nu_R) \\
	&\bar{\mathrm{E}}[x] = {\mu} \\
	&\overline{\mathrm{cov}}(x,x) = {\Sigma}_c+{P}\overline{\mathrm{cov}}(\nu_R,\nu_R){P}^T.
	\end{align}
	According to Theorem \ref{thm:MFGMLEAppro}, these show the MLE of $P$, $\mu$ and $\Sigma$ are the same as the original MFG.
\end{proof}

Next, we introduce how to propagate the uncertainty of attitude and gyro bias using the unscented transform.
Given $(R_k,x_k)\sim\mathcal{MG}(\mu_k,\Sigma_k,P_k,U_k,S_k,V_k)$, we select $7+2n= 13$ sigma points, together with 7 sigma points from the noise $H_u\Delta W_u$ in \eqref{eqn:HDeltaW_Gaussian} according to any common unscented transform for a Gaussian distribution in $\real{3}$ (for example, see \cite[Chapter 8-11]{haug2012bayesian}).
These sigma points are propagated to $t_{k+1}$ through the discrete kinematics model \eqref{eqn:DistSDEAttitude} and \eqref{eqn:DistSDEBias} without the noise term $H_v\Delta W_v$.
Then a new MFG at $t_{k+1}$ is recovered from these propagated sigma points using the marginal-conditional MLE.
The effect of the noise term $H_v\Delta W_v$ driving the gyro bias in \eqref{eqn:DistSDEBias} is accounted by adding $hG_v$ to the new covariance matrix $\Sigma_{k+1}$ for $x_{k+1}$, according to Proposition \ref{prop:MFGAddN}.
The pseudocode for this uncertainty propagation scheme is summarized in Table \ref{tab:UnscentedUP}.

\begin{proposition} \label{prop:MFGAddN}
	Suppose $(R,x)\sim\mathcal{MG}(\mu,\Sigma,P,U,S,V)$, and $x'\sim\mathcal{N}(\mu',\Sigma')$ is independent of $(R,x)$. Then $(R,x+x') \sim \mathcal{MG}(\mu+\mu',\Sigma+\Sigma',P,U,S,V)$.
\end{proposition}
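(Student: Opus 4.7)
My plan is to exploit the conditional-marginal factorization of MFG that is apparent directly from its defining density \eqref{eqn:MFGDensity}: the marginal of $R$ is the matrix Fisher $\mathcal{M}(F)$, and the conditional of $x$ given $R$ is $\mathcal{N}(\mu_c,\Sigma_c)$, where $\mu_c$ and $\Sigma_c$ are given by \eqref{eqn:MFGMiuc} and \eqref{eqn:MFGSigmac}. The strategy is to compute the marginal of $R$ and the conditional law of $x+x'$ given $R$ for the new joint distribution, and then recognize the result as an MFG with parameters $(\mu+\mu',\Sigma+\Sigma',P,U,S,V)$ by invoking Proposition \ref{prop:MFGEquivalent}.

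First I would observe that since $x'$ is independent of $(R,x)$, the marginal law of $R$ under the new joint distribution is unchanged and remains $\mathcal{M}(F)$ with $F=USV^T$. Next, I would condition on $R$: because $x'$ is independent of $R$ (being independent of the whole pair $(R,x)$), and given $R$ the variable $x$ is Gaussian with mean $\mu_c$ and covariance $\Sigma_c$ while $x'\sim\mathcal{N}(\mu',\Sigma')$ independently, the sum satisfies
\begin{equation*}
    (x+x')\,|\,R \;\sim\; \mathcal{N}(\mu_c+\mu',\,\Sigma_c+\Sigma').
\end{equation*}

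Now I would compare with the candidate distribution $\mathcal{MG}(\mu+\mu',\Sigma+\Sigma',P,U,S,V)$. Its intermediate parameters in \eqref{eqn:MFGMiuc} and \eqref{eqn:MFGSigmac} are
\begin{align*}
    \tilde{\mu}_c &= (\mu+\mu')+P\nu_R = \mu_c+\mu', \\
    \tilde{\Sigma}_c &= (\Sigma+\Sigma')-P(\tr{S}I_{3\times 3}-S)P^T = \Sigma_c+\Sigma',
\end{align*}
and $\tilde F = USV^T = F$. Thus the marginal of $R$ and the conditional of $x+x'$ given $R$ coincide with those of $\mathcal{MG}(\mu+\mu',\Sigma+\Sigma',P,U,S,V)$, so the joint densities agree. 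By Proposition \ref{prop:MFGEquivalent} the two distributions are equal, which completes the argument.

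There is essentially no technical obstacle here; the only subtlety is to note that $\tilde{\Sigma}_c=\Sigma_c+\Sigma'$ is positive definite (since $\Sigma_c\succ 0$ by assumption and $\Sigma'\succeq 0$), so the parameter tuple $(\mu+\mu',\Sigma+\Sigma',P,U,S,V)$ is admissible in the sense of Definition \ref{def:MFG}. One should also be mindful that $P$, $U$, $S$, $V$ remain unchanged, reflecting the fact that adding an independent Gaussian to $x$ enlarges only the unconditional linear covariance but preserves both the attitude marginal and the attitude-linear correlation structure projected onto the tangent space at $UV^T$.
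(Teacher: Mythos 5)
Your proposal is correct and follows essentially the same route as the paper: the paper writes the joint density of $(R,x+x')$ as a convolution integral over $x$ and invokes the standard fact that the sum of independent Gaussians is Gaussian to conclude that, conditioned on $R$, the sum is $\mathcal{N}(\mu_c+\mu',\Sigma_c+\Sigma')$, then matches the resulting density to \eqref{eqn:MFGDensity}. Your phrasing in terms of the conditional-marginal decomposition plus Proposition \ref{prop:MFGEquivalent} is just a cleaner packaging of the same argument, and your remark on the positive definiteness of $\Sigma_c+\Sigma'$ is a small but worthwhile addition the paper omits.
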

\begin{proof}
	Let $y=x+x'$, then the density function for $(R,y)$ is
	\begin{align*}
		&f_{R,y}(R,y) = \int_{x\in\real{n}}f_{R,x}(R,x)f_{x'}(y-x)\diff{x} \\
		&\quad = \frac{1}{c}\int_{x\in\real{n}} \etr{FR^T} \exp\left(-\frac{1}{2}(x-\mu_c)^T\Sigma_c^{-1}(x-\mu_c)\right) \\ 
		&\quad\quad\times \exp\left(-\frac{1}{2}(y-x-\mu')^T(\Sigma')^{-1}(y-x-\mu')\right) \diff{x} \\
		&\quad = \frac{1}{c'}\etr{FR^T} \exp\left\{-\frac{1}{2}(y-\mu_c-\mu')^T(\Sigma_c+\Sigma')^{-1} \right. \\
		&\quad\quad \times (y-\mu_c-\mu') \bigg\},
	\end{align*}
	where $c$, $c'$ are some normalizing constants, and the last equality comes from the standard proof for adding two independent Gaussian random vectors.
	It suffices to compare the above equation with \eqref{eqn:MFGDensity}.
\end{proof}

Compared to the analytical uncertainty propagation method, the unscented transform uses sigma points as samples to approximate the moments necessary for MLE, instead of calculating them directly.

\begin{table}
	\caption{Unscented Uncertainty Propagation \label{tab:UnscentedUP}}
	\begin{algorithmic}[1]
		\algrule[0.8pt]
        \Procedure{$\mathcal{MG}(t_{k+1}) = $ Unscented Propagation}{$\mathcal{MG}(t_k),\Omega_k$}
		\algrule
		\State Select sigma points and weights $\{(R,x,w)_i\}_{i=1}^{13}$ from $\mathcal{MG}(t_k)$.
		\State Select sigma points and weights $\{(H_u\Delta W_u,w)_{j}\}_{j=1}^7$ from $\mathcal{N}({0},hG_u)$ according to any unscented transform of a Gaussian distribution \cite{haug2012bayesian}.
		\State Propagate the sigma points through \eqref{eqn:DistSDEAttitude} and \eqref{eqn:DistSDEBias} without the noise $H_v\Delta W_v$, i.e.
		\begin{align*}
			R_{i,j} &= R_i\exp(h(\hat{\Omega}_k + \hat{x}_i) + (H_u\Delta W_u)_j^\wedge), \qquad x_{i,j} &= x_i,
		\end{align*}
		and calculate the weights as $w_{i,j}=w_iw_j$.
        \State Obtain $(\mu_{k+1},\Sigma_{k+1},P_{k+1},U_{k+1},S_{k+1},V_{k+1})$ from these $13 \times 7 = 91$ sigma points $(R,x)_{i,j}$ using Theorem \ref{thm:MFMLE} and Theorem \ref{thm:MFGMLEAppro}.
        \State Let $\Sigma_{k+1} = \Sigma_{k+1}+hG_v$
        \State Set $\mathcal{MG}(t_{k+1}) = \mathcal{MG}(\mu_{k+1},\Sigma_{k+1},P_{k+1},U_{k+1},S_{k+1},V_{k+1})$.
		\EndProcedure
		\algrule[0.8pt]
	\end{algorithmic}
\end{table}

\subsection{Measurement Update} \label{sec:MeasurementUpdate}

Next, we present how to update the propagated MFG when measurements are available.
Here we consider the cases when the attitude is directly measured or some vectors associated with attitude, such as magnetic or gravity direction, are measured.
As the measurement update is assumed to be completed instantaneously, the subscript $k$ denoting the time step is omitted throughout this subsection. 
The variables relevant to the posterior distribution conditioned by measurements are denoted by the superscript $+$.

First, suppose the attitude is measured by $N_a$ attitude sensors as $Z_i\in\SO$, which are disturbed by matrix Fisher distributed errors with the parameters $F_{Z_i}\in\real{3\times 3}$ for $i=1,\ldots,N_a$.
More specifically, given the true attitude $R_t\in\SO$, the measurement error $R_t^TZ_i\in\SO$ follows the matrix Fisher distribution with the parameter $F_{Z_i}$, which characterizes the accuracy and the bias of the $i$-th attitude sensor. 

Next, suppose there are also $N_v$ reference vectors ${a}_j\in\mathbb{S}^2$ expressed in the inertial reference frame, which are measured by direction sensors in the body-fixed frame as ${z}_j\in\mathbb{S}^2$ for $j=1,\ldots,N_v$.
Furthermore, given the true attitude $R_t$, assume the noisy measurement ${z}_j$ follows the Von Mises Fisher distribution defined on $\mathbb{S}^2$ \cite{mardia2009directional} with mean direction $R_t^TB_j{a}_j\in\mathbb{S}^2$ and concentration parameter $\kappa_j>0$.
The parameter $B_j\in\SO$ specifies the constant bias of the direction sensor, and $\kappa_j$ specifies the concentration of its random noise.

Suppose the prior distribution of $(R,x)$ before measurements is an MFG with parameters $({\mu},{\Sigma},{P},U,S,V)$. 
By Bayes' rule and Theorem III.2 in \cite{LeeITAC18}, the posterior density conditioned on all of the available measurements $\mathcal{Z} = \{Z_1,\ldots Z_{N_a},{z}_1,\ldots,{z}_{N_v}\}$ is 
\begin{align} \label{eqn:PostDensity}
    p(R,x\lvert \mathcal{Z} ) & \propto \mathrm{etr}\left(\left(F+\sum_{i=1}^{N_a}Z_iF_i^T+\sum_{j=1}^{N_v}\kappa_jB_j{a}_j{z}_j^T\right)R^T\right) \nonumber \\
	&\quad \times \mathrm{exp}\left(-\frac{1}{2}(x-{\mu}_c)^T{\Sigma}_c^{-1}(x-{\mu}_c)\right),
\end{align}
where $F$, ${\mu}_c$ and ${\Sigma}_c$ are defined as in Definition \ref{def:MFG} with respect to $({\mu},{\Sigma},{P},U,S,V)$.
The above posterior distribution of $(R,x)$ is no longer an MFG, as the tangent space at the mean attitude of the updated matrix Fisher part is altered, i.e., the correlation does not satisfy the constraint described in Theorem \ref{thm:conditioning}. 
Similar to the previous two subsections, we match an MFG with parameters $(\mu^+,\Sigma^+,P^+,U^+,S^+,V^+)$ to this density through the marginal-conditional MLE after calculating the moments required.

\begin{theorem} \label{thm:PostMoments}
	Define  $F^+\in\real{3\times 3}$ as
	\begin{align} \label{eqn:FPosterior}
		F^+ = F+\sum_{i=1}^{N_a}Z_iF_i^T+\sum_{j=1}^{N_v}\kappa_j B_j{a}_j{z}_j^T,
	\end{align}
	and let its proper singular value decomposition be $F^+ = U^+ S^+ (V^+)^T$.
	Also, let 
	\begin{align}
		\nu^+_R = (Q^+S^+-S^+(Q^+)^T)^\vee,
	\end{align}
	for $Q^+ = (U^+)^T R V^+\in\SO$.
    Then the moments of the posterior density \eqref{eqn:PostDensity}, namely $\expect{ R\lvert \mathcal{Z} }$, $\expect{ \nu^+_R\lvert \mathcal{Z} }$ and $\expect{ \nu^+_R(\nu^+_R)^T\lvert \mathcal{Z} }$ are identical to their counterparts in Theorem \ref{thm:MFGMoments} after replacing $U,S,V$ with $U^+,S^+,V^+$, and
	\begin{align}
        \expect{ x \lvert \mathcal{Z} } & = {\mu}+{P}\expect{ \nu_R \lvert \mathcal{Z} }, \label{eqn:PostEx} \\
        \expect{ xx^T \lvert \mathcal{Z} } & = {\mu}{\mu}^T+{\mu}\expect{ \nu_R \lvert \mathcal{Z} }^T{P}^T+{P}\expect{ \nu_R \lvert \mathcal{Z} }{\mu}^T \nonumber \\
                                            &\quad+{P}\expect{ \nu_R\nu_R^T \lvert \mathcal{Z} }{P}^T+{\Sigma}_c,\\
        \expect{ x(\nu^+_R)^T \lvert \mathcal{Z} } & = {P}\expect{ \nu_R(\nu^+_R)^T \lvert \mathcal{Z} } \label{eqn:PostExv'R},
	\end{align}
	where
	\begin{align}
        \expect{ \nu_R \lvert \mathcal{Z} } & =\tilde{U}(\expect{Q^+ \lvert \mathcal{Z}} \tilde{S}^T-\tilde{S}\expect{Q^+ \lvert \mathcal{Z}}^T)^\vee, \label{eqn:PostvR} \\
        \expect{ \nu_R\nu_R^T \lvert \mathcal{Z} } & = \tilde U\expect{ \tilde{\nu}^+_R(\tilde{\nu}^+_R)^T \lvert \mathcal{Z} }\tilde U^T, \\
        \expect{ \nu_R(\nu^+_R)^T \lvert \mathcal{Z} } & = \tilde U\expect{ \tilde{\nu}^+_R(\nu^+_R)^T \lvert \mathcal{Z} }, \label{eqn:PostvRv'R}
	\end{align}
    with $\tilde U = U^TU^+$, $\tilde V = V^TV^+\in\SO$, $\tilde{S} = \tilde U^T{S}\tilde V\in\real{3\times 3}$ and $\tilde\nu^+_R\in\real{3}$ is
	\begin{align*}
		\tilde{\nu}^+_R = (Q^+\tilde{S}^T-\tilde{S}(Q^+)^T)^\vee.
	\end{align*}
\end{theorem}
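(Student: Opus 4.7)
The plan is to exploit the structural factorization of the posterior density \eqref{eqn:PostDensity}. Since the measurement likelihoods depend on $R$ only and not on $x$, and since $\Sigma_c$ defined in \eqref{eqn:MFGSigmac} does not depend on $R$, integrating out $x$ from \eqref{eqn:PostDensity} shows that the marginal distribution of $R$ conditioned on $\mathcal{Z}$ is exactly the matrix Fisher distribution $\mathcal{M}(F^+)$, while the conditional distribution $x\,|\,R,\mathcal{Z}$ coincides with the Gaussian $\mathcal{N}(\mu+P\nu_R,\Sigma_c)$ inherited from the prior, where $\nu_R$ is still defined with respect to the prior parameters $U,S,V$. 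This decoupling is the backbone of the argument.

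With this factorization, the moments $\expect{R\lvert \mathcal{Z}}$, $\expect{\nu^+_R\lvert \mathcal{Z}}$, and $\expect{\nu^+_R(\nu^+_R)^T\lvert \mathcal{Z}}$ follow at once by invoking Theorem \ref{thm:MFGMoments} applied to $\mathcal{M}(F^+)$: since $F^+=U^+S^+(V^+)^T$ is a proper SVD and $\nu^+_R$ is built from $Q^+=(U^+)^TRV^+$ in exactly the same form as $\nu_R$ was built from $Q$, the claimed expressions follow by relabeling. The moments involving $x$ are then obtained by the tower property. Conditioning on $R$ and using $x\,|\,R,\mathcal{Z}\sim\mathcal{N}(\mu+P\nu_R,\Sigma_c)$ gives
\begin{align*}
\expect{x\lvert \mathcal{Z}} &= \mu + P\expect{\nu_R\lvert \mathcal{Z}},\\
\expect{xx^T\lvert \mathcal{Z}} &= \Sigma_c + \expect{(\mu+P\nu_R)(\mu+P\nu_R)^T\lvert \mathcal{Z}},\\
\expect{x(\nu^+_R)^T\lvert \mathcal{Z}} &= \mu\,\expect{\nu^+_R\lvert \mathcal{Z}}^T + P\expect{\nu_R(\nu^+_R)^T\lvert \mathcal{Z}},
\end{align*}
and since $\expect{\nu^+_R\lvert \mathcal{Z}}=0$ by the first part, these reduce to \eqref{eqn:PostEx}--\eqref{eqn:PostExv'R} after expanding the quadratic term.

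The remaining task is to rewrite the prior $\nu_R$ in terms of the posterior variable $Q^+$ so that its conditional expectations against $\mathcal{M}(F^+)$ can be evaluated. Writing $R=U^+Q^+(V^+)^T$ and using $\tilde U=U^TU^+$, $\tilde V=V^TV^+$, one computes
\[
\nu_R = (U^TRVS - SV^TR^TU)^\vee = \bigl(\tilde U Q^+ \tilde V^T S - S\tilde V (Q^+)^T\tilde U^T\bigr)^\vee.
\]
Factoring $\tilde U(\,\cdot\,)\tilde U^T$ out of the skew-symmetric matrix and applying the identity $(\tilde U A \tilde U^T)^\vee = \tilde U A^\vee$ for $A\in\so$, together with $\tilde S^T=\tilde V^T S\tilde U$ (using $S=S^T$), yields the key algebraic identity $\nu_R = \tilde U\tilde\nu^+_R$. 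Substituting this identity directly into $\expect{\nu_R\lvert \mathcal{Z}}$, $\expect{\nu_R\nu_R^T\lvert \mathcal{Z}}$ and $\expect{\nu_R(\nu^+_R)^T\lvert \mathcal{Z}}$ produces \eqref{eqn:PostvR}--\eqref{eqn:PostvRv'R}.

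The main obstacle is the bookkeeping: the prior $\nu_R$ lives in the tangent space at the \emph{prior} mean attitude $UV^T$, whereas the posterior distribution is naturally parameterized around the \emph{posterior} mean attitude $U^+(V^+)^T$. The identity $\nu_R=\tilde U\tilde\nu^+_R$ is precisely the change-of-basis that bridges these two tangent spaces, and once established, every moment required reduces to expectations of $Q^+$ under $\mathcal{M}(S^+)$, which are computable through \eqref{eqn:S2D} and the higher-order moment formulas referenced earlier. Nothing else needs to be computed beyond routine trace identities and linearity of expectation.
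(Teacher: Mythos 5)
Your proposal is correct and follows essentially the same route as the paper: the paper's proof likewise obtains the $R$-moments and \eqref{eqn:PostEx}--\eqref{eqn:PostExv'R} by integrating against the factored posterior density (the marginal $\mathcal{M}(F^+)$ for $R$ and the unchanged conditional Gaussian for $x\,|\,R$), and its only displayed computation is precisely your key identity $\nu_R = \tilde{U}\bigl(Q^+\tilde{S}^T-\tilde{S}(Q^+)^T\bigr)^\vee = \tilde{U}\tilde{\nu}^+_R$ obtained from $Q=\tilde{U}Q^+\tilde{V}^T$. Your write-up merely makes explicit the tower-property bookkeeping and the fact that $\expect{\nu^+_R\lvert\mathcal{Z}}=0$, both of which the paper leaves implicit.
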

\begin{proof}
    The expressions for $\expect{R \lvert \mathcal{Z}}$, $\expect{\nu^+_R \lvert \mathcal{Z}}$, $\expect{\nu^+_R(\nu^+_R)^T \lvert \mathcal{Z}}$, and \eqref{eqn:PostEx}-\eqref{eqn:PostExv'R} can be obtained by integrating these variables with respect to the density \eqref{eqn:PostDensity}.
	Since $ Q = U^T R V = \tilde U Q^+ \tilde V^T$, we have
	\begin{equation*}
		\nu_R = (\tilde UQ^+\tilde V^TS-S\tilde VQ^+ \tilde U^T)^\vee = \tilde{U}(Q^+\tilde{S}^T-\tilde{S}(Q^+)^T)^\vee,
	\end{equation*}
	from which \eqref{eqn:PostvR}-\eqref{eqn:PostvRv'R} follow.
\end{proof}

\begin{remark}
	$\expect{\tilde{\nu}_R^+ \tilde{\nu}_R^+ \lvert \mathcal{Z}}$ and $\expect{\tilde{\nu}_R^+\nu_R^+ \lvert \mathcal{Z}}$ can be expressed as linear combinations of $\expect{Q^+_{ij}Q^+_{kl} \lvert \mathcal{Z}}$ for $i,j,k,l\in\{1,2,3\}$, therefore they can be calculated using the second order moments of the matrix Fisher distribution with parameter $S^+$.
\end{remark}

Since the attitude part of \eqref{eqn:PostDensity} is already a matrix Fisher density, $U^+S^+(V^+)^T = F^+$ is the solution to the marginal MLE for the matrix Fisher part.
The conditional MLE is solved by Theorem \ref{thm:MFGMLEAppro} with the moments calculated above, which yields $\mu^+$, $\Sigma^+$ and $P^+$.
This provides the measurement update to represent the posterior distribution conditioned by the measurement as an MFG.

The proposed propagation and correction steps constitute a Bayesian attitude and gyro bias estimator. 
The current belief represented by an MFG can be propagated until an additional measurement is available, from which the propagated belief is updated. 
The estimates for the attitude and gyro bias are given by $UV^T$ and $\mu$, respectively. 
The pseudocode for the proposed Bayesian estimation scheme is presented in Table \ref{tab:BayesianFilter}.
The MATLAB code can be accessed at \cite{MFGCode}.

\begin{table}
	\caption{Bayesian Filter for Attitude and Gyroscope Bias \label{tab:BayesianFilter}}
	\begin{algorithmic}[1]
		\algrule[0.8pt]
		\Procedure{Estimation}{$\mathcal{MG}(t_0),\Omega(t)$}
		\algrule
		\State Let $k=0$.
		\Repeat
        \State Either $\mathcal{MG}(t_{k+1})$ = Analytical Propagation($\mathcal{MG}(t_k)$,$\Omega(t_k)$) or $\mathcal{MG}(t_{k+1})$ = Unscented Propagation($\mathcal{MG}(t_k)$,$\Omega(t_k)$).
		\State $k=k+1$.
		\Until $Z(t_{k+1})$ or $z(t_{k+1})$ ia available
		\State $\mathcal{MG}(t_{k+1})$ = Measurement Update($\mathcal{MG}(t_{k+1})$,$Z(t_{k+1})$,$z(t_{k+1})$).
		\State Obtain the estimates as $R(t_{k+1})=UV^T$, $x(t_{k+1})=\mu$ for $\mathcal{MG}(t_{k+1})$.
		\State \textbf{go to} step 3.
		\EndProcedure
		\algrule
		\Procedure{$\mathcal{MG}^+$ = Measurement Update}{$\mathcal{MG}^-$,$Z$,$z$}
        \State Compute $F^+$ from \eqref{eqn:FPosterior}, and calculate its proper SVD as $U^+S^+(V^+)^T=F^+$. 
		\State Calculate the moments of the posterior density in Theorem \ref{thm:PostMoments}.
		\State Obtain $\mu^+,\Sigma^+,P^+$ according to Theorem \ref{thm:MFGMLEAppro}.
		\State Set $\mathcal{MG}^+ = \mathcal{MG}(\mu^+,\Sigma^+,P^+,U^+,S^+,V^+)$
		\EndProcedure
		\algrule[0.8pt]
	\end{algorithmic}
\end{table}

\section{Numerical Simulations}

In this section, we compare the proposed Bayesian filter based on MFG with the well-established MEKF through simulation studies.
We consider a rotational motion of a rigid body where the three Euler angles (body-fixed 3-2-1) follow sinusoidal waves with the frequency at \SI{0.35}{\hertz}, and the amplitudes of $\pi$, $\pi/2$ and $\pi$, respectively.
The corresponding average angular speed is \SI{6.17}{\radian/\second}.
The measured angular velocity is obtained from its true value by adding a bias and a white noise.
The gyroscope bias is modeled as a Wiener process (bias-instability noise) starting at zero with the isotropic strength $\sigma_v = $ \SI{500}{deg/\hour/\sqrt{\second}}, i.e., $H_v = \sigma_vI_{3\times3}$ in \eqref{eqn:SDEBias}.
The white noise of angular velocity is Gaussian (angle random walk) with the isotropic strength $\sigma_u = $ \SI{10}{deg/\sqrt{\second}}, i.e., $H_u = \sigma_uI_{3\times3}$ in \eqref{eqn:SDEAttitude}.
These two values are greater than those of typical gyroscopes, but they are selected to generate large uncertainties favored by the MFG distribution.

The attitude measurement model for the proposed Bayesian estimator with MFG is given by
\begin{equation} \label{eqn:MeaMFG}
	Z = R_t \delta R,
\end{equation}
where $R_t\in\SO$ is the true attitude, and $\delta R\in\SO$ follows a matrix Fisher distribution with a diagonal matrix parameter $S_m\in\real{3\times 3}$.
This corresponds to the attitude measurement model introduced in Section \ref{sec:MeasurementUpdate} without any constant bias.
The measurement equation for MEKF is not written as \eqref{eqn:MeaMFG}. 
Instead, it is given by
\begin{equation} \label{eqn:MeaMEKF}
	Z = R_t \exp(\delta\hat{\theta}),
\end{equation}
where $\delta\theta\in\real{3}$ and $\delta\theta\sim\mathcal{N}(0,\Sigma_m)$.

In order to fairly compare the proposed estimator based on MFG with MEKF, each estimator is simulated with both of \eqref{eqn:MeaMFG} and \eqref{eqn:MeaMEKF} after conversion. 
For example, when \eqref{eqn:MeaMFG} is used in MEKF, $\log(\delta R)^\vee$, which corresponds to $\delta\theta$ in \eqref{eqn:MeaMEKF}, is fitted into a Gaussian distribution.
Conversely, if \eqref{eqn:MeaMEKF} is used in the MFG filter, $\exp(\delta\hat{\theta})$, which corresponds to  $\delta R$ in \eqref{eqn:MeaMFG}, is fitted to a matrix Fisher distribution. 

The sampling frequencies for the gyroscope and the attitude measurements are  \SI{150}{\hertz}, and \SI{30}{\hertz}, respectively. 
Next, we consider three cases depending on the level of the noise and the initial error.
To test the estimators in a statistical sense, under each condition, sixty Monte Carlo simulations were conducted with the simulation time of 60 seconds.
The attitude and bias errors are averaged in each simulation, and further they are averaged over sixty simulations.
Paired $t$-tests ($N=60,\alpha=0.05$) were conducted between the two types of the proposed estimators based on MFG and MEKF.
Any significant statistical difference is indicated by $p<0.05$.

\subsection{Small Initial Errors} \label{sec:simA}

In this subsection, the measurement noises are chosen as isotropic with $S_m = s_mI_{3\times 3}$ and $\Sigma_m = \sigma_m^2I_{3\times 3}$ for $s_m,\sigma_m\in\real{1}$.
Three different levels of concentration are tested: (i) $s_m=2.4$, $\sigma_m=0.5$; (ii) $s_m=12$, $\sigma_m=0.2$; (iii) $s_m=200$, $\sigma_m=0.05$.
Therefore, case (i) has the greatest measurement noise among the three cases considered, and case (iii) has the lowest measurement noise. 
The initial distribution for attitude is the same as $Z$ in \eqref{eqn:MeaMFG} (or \eqref{eqn:MeaMEKF} for MEKF), and the initial distribution for bias is $x_0\sim \mathcal{N}(0,0.1^2I_{3\times3})$, and they are independent.
The initial attitude and bias estimates are drawn randomly from these distributions.

\begin{table*}
	\centering
	\caption{Attitude (\SI{}{deg}) and bias errors (\SI{}{deg/\second}) ($\pm$ sd): small initial errors \label{tab:MFGvsMEKFA}}
	\begin{tabular}{l|c|ccc}
        Measurement Model &  & MFG Analytical ($p$) & MFG Unscented ($p$) & MEKF \\ \hline \hline
        \multirow{2}{*}{\eqref{eqn:MeaMEKF} with $\sigma_m$=0.5} & attitude & 11.94$\pm$0.40 ($<$0.001) & 11.94$\pm$0.40 ($<$0.001) & 11.88$\pm$0.41 \\
		& bias & 4.6$\pm$1.4 (0.81) & 4.6$\pm$1.4 (0.69) & 4.6$\pm$1.5 \\ \hline
        \multirow{2}{*}{\eqref{eqn:MeaMFG} with $s_m$=2.4} & attitude & 12.00$\pm$0.49 ($<$0.001) & 12.00$\pm$0.49 ($<$0.001) & 12.13$\pm$0.48 \\
		& bias & 5.0$\pm$1.7 (0.46) & 4.9$\pm$1.7 (0.42) & 5.0$\pm$1.8 \\ \hline \hline
		\multirow{2}{*}{\eqref{eqn:MeaMEKF} with $\sigma_m$=0.2} & attitude & 7.285$\pm$0.151 ($<$0.001) & 7.285$\pm$0.151 ($<$0.001) & 7.284$\pm$0.152 \\
		& bias & 3.79$\pm$1.11 (0.41) & 3.79$\pm$1.12 (0.47) & 3.79$\pm$1.13 \\ \hline
		\multirow{2}{*}{\eqref{eqn:MeaMFG} with $s_m$=12} & attitude & 7.449$\pm$0.182 (0.009) & 7.449$\pm$0.182 (0.015) & 7.450$\pm$0.181 \\
		& bias & 3.89$\pm$1.14 (0.09) & 3.89$\pm$1.14 (0.033) & 3.90$\pm$1.15 \\ \hline \hline
		\multirow{2}{*}{\eqref{eqn:MeaMEKF} with $\sigma_m$=0.05} & attitude & 3.54737$\pm$0.04250 (0.64) & 3.54741$\pm$0.04249 (0.009) & 3.54736$\pm$0.04251 \\
		& bias & 3.356$\pm$0.900 (0.97) & 3.357$\pm$0.903 (0.88) & 3.356$\pm$0.910 \\ \hline
		\multirow{2}{*}{\eqref{eqn:MeaMFG} with $s_m$=200} & attitude & 3.55406$\pm$0.04607 (0.21) & 3.55414$\pm$0.04606 (0.035) & 3.55409$\pm$0.04605 \\
		& bias & 3.722$\pm$0.939 (0.053) & 3.726$\pm$0.940 (0.031) & 3.733$\pm$0.940 \\ \hline \hline
	\end{tabular}
\end{table*}

The results for this simulation study are summarized in Table \ref{tab:MFGvsMEKFA}.
It is shown that the accuracies for MFG based filters and MEKF are almost identical, with the differences in the attitude errors and the bias errors appearing mostly in the 3rd to 5th significant digits, if there is any.
The statistical differences found through $t$-tests are mainly caused by different measurement types, i.e. if $\delta R$ is used, MFG based filters are usually better than MEKF; conversely if $\delta\theta$ is used, MEKF is usually better.
This is natural since the measurement model for MFG based filters is formulated with $\delta R$, whereas that for MEKF is formulated with $\delta\theta$ \cite{lefferts1982kalman}, and there are some errors in matching a distribution of $\delta R$ to that of $\delta\theta$, and vice versa.
The only exception appears in $s_m=200$, where though $\delta R$ is used, the attitude error of MFG unscented filter is still greater than MEKF with the statistical significance of $p=0.035<0.05$.
Also, for $s_m=12$ and $s_m=200$, the bias error for MFG unscented filter is significantly smaller than MEKF.
But these differences are obviously negligible in practice.

\subsection{Large Initial Errors with Falsely High Confidence}

Next, the measurement noises are set as isotropic: $S_m=12I_{3\times3}$ or $\Sigma_m=0.2^2I_{3\times3}$, which is identical to the second case of the previous subsection. 
But, the initial attitude estimate is constructed by rotating the true attitude about the first body-fixed axis by 180 degrees, and the initial estimate of the bias is $[0.2,0.2,0.2]^T$ \SI{}{\radian/\second}.
However, the initial uncertainties for the attitude and bias are $S_0=200I_{3\times3}$, and $\Sigma_0=0.1^2I_{3\times3}$, respectively.
In other words, the estimator is falsely too confident about the completely wrong attitude. 
In practice, this could correspond to the scenario when large unexpected disturbance happens.

\begin{table*}
	\centering
	\caption{Attitude (\SI{}{deg}) and bias errors (\SI{}{deg/\second}) ($\pm$ sd): large initial errors with falsely high confidence \label{tab:MFGvsMEKFB}}
	\begin{tabular}{l|c|ccc}
		Measurement Model &  & MFG Analytical ($p$) & MFG Unscented ($p$) & MEKF \\ \hline \hline
		\multirow{2}{*}{\eqref{eqn:MeaMEKF} with $\sigma_m$=0.2} & attitude & 8.07$\pm$0.19 (0.006) & 8.07$\pm$0.19 (0.006) & 8.12$\pm$0.22 \\
		& bias & 6.7$\pm$1.9 ($<$0.001) & 6.7$\pm$1.9 ($<$0.001) & 8.3$\pm$2.7 \\ \hline
		\multirow{2}{*}{\eqref{eqn:MeaMFG} with $s_m$=12} & attitude & 8.23$\pm$0.22 ($<$0.001) & 8.23$\pm$0.22 ($<$0.001) & 8.30$\pm$0.26 \\
		& bias & 6.7$\pm$2.0 ($<$0.001) & 6.7$\pm$2.1 ($<$0.001) & 8.7$\pm$2.8 \\ \hline \hline
	\end{tabular}
\end{table*}

The results for this case are presented in Table \ref{tab:MFGvsMEKFB}.
It is seen that under this challenging scenario, MFG based filters are more accurate than MEKF especially in the bias estimate for both types of measurement model.
The reason for this better performance can be explained by the degree of uncertainties shown in Fig. \ref{fig:simUncertaintyB} and the estimation error given in Fig. \ref{fig:simAccuracyMagB} for a particular simulation.
Right after the attitude measurements become available, the proposed MFG estimator detects the discrepancy between the predicted attitude and the measurements caused by the completely wrong initial estimate, and it gradually increases the uncertainty in the attitude estimate to reflect the inconsistency. 
At around $t=$ \SI{0.25}{\second}, the uncertainty in attitude estimate becomes greater then the expected measurement noise of the sensor, i.e., the sensor measurement is more reliable than the current estimate. 
As such, over the measurement update step, the attitude estimate quickly shifts towards the measurements, thereby causing the attitude estimation error to be almost eliminated over a single step in Fig. \ref{fig:simAccuracyMagB}.

On the other hand, it is well known that the covariance matrix in a Kalman filter is independent of measurement errors, so its uncertainty remains low despite the large discrepancy.
As such, in the measurement update steps, MEKF consistently relies more on the incorrect prediction over the more accurate measurement, and therefore, the attitude estimation error reduces gradually. 
This further reduces the convergence rate of the bias estimation error.
In short, the MFG filter is able to adjust the uncertainty of its estimates based on the consistency of the information combined, and it is more robust to faulty initial estimates. 

\subsection{Non-isotropic Measurement Noises}

Here, we test two groups of non-isotropic measurement noises: (i) $S_m = \diag{100,0,0}$ or $\Sigma_m = \diag{10,0.01,0.01}$, and (ii) $S_m = \diag{100,50,-50}$ or $\Sigma_m = \diag{10,0.025,0.0067}$.
In each group, the distributions of $\delta R$ and $\exp(\delta\hat{\theta})$ are chosen to be similar.
In particular, the first group corresponds to the case when the rotation about the gyroscope fixed x-axis is completely unknown, and the uncertainties in the other two axes are the same; the second group corresponds to the case when even the uncertainties in the other two axes are different.
In practice, these occur when only one vector measurement is available along the first body-fixed axis, so the rotation about this vector is not measured at all.
The initial attitude, bias estimates and their distributions are chosen the same as in Section \ref{sec:simA}.

The results for this simulation study are summarized in Table \ref{tab:MFGvsMEKFC}.
It is seen that under all circumstances, the MFG filters are much more accurate than MEKF.
The reason for this better performance is that the Gaussian distribution for $\delta\theta$ is incapable of modeling the uniformly distributed rotation about the first body-fixed axis.
See Fig. \ref{fig:wrapping} for an intuitive illustration, where a Gaussian distribution defined in $\mathbb{R}$ and a wrapped Gaussian distribution \cite{mardia2009directional} defined in the circular space $\mathbb{R}/2\pi$ are compared.
It is clearly seen when the uncertainty is large, i.e. $\sigma^2=10$ which is the same as the uncertainty along the first body-fixed axis in this simulation study, the Gaussian distribution is unable to characterize the nearly uniform distribution on $[-\pi,\pi]$ properly.
Whereas the wrapped Gaussian density is much closer to a uniform distribution.
Consequently, MEKF falsely takes too much information in the rotation about the first body-fixed axis from measurements, which leads to the degraded performance. 
The proposed Bayesian estimator based on MFG is constructed on the compact manifold of $\SO$ to avoid the above issue of MEKF.

\begin{table*}
	\centering
	\caption{Attitude (\SI{}{deg}) and bias errors(\SI{}{deg/\second}) ($\pm$ sd): non-isotropic measurement noises \label{tab:MFGvsMEKFC}}
	\begin{tabular}{l|c|ccc}
		Measurement Model &  & MFG Analytical ($p$) & MFG Unscented ($p$) & MEKF \\ \hline \hline
		\multirow{2}{*}{\eqref{eqn:MeaMEKF} with $\Sigma_m = \diag{10,0.01,0.01}$} & attitude & 6.88$\pm$0.32 ($<$0.001) & 6.88$\pm$0.32 ($<$0.001) & 8.99$\pm$0.39 \\
		& bias & 3.6$\pm$1.1 ($<$0.001) & 3.6$\pm$1.1 ($<$0.001) & 4.0$\pm$1.4 \\ \hline
		\multirow{2}{*}{\eqref{eqn:MeaMFG} with $S_m = \diag{100,0,0}$} & attitude & 7.47$\pm$0.36 ($<$0.001) & 7.47$\pm$0.36 ($<$0.001) & 9.98$\pm$0.46 \\
		& bias & 3.9$\pm$1.1 ($<$0.001) & 3.9$\pm$1.1 ($<$0.001) & 4.5$\pm$1.3 \\ \hline \hline
		\multirow{2}{*}{\eqref{eqn:MeaMEKF} with $\Sigma_m = \diag{10,0.025,0.0067}$} & attitude & 6.96$\pm$0.31 ($<$0.001) & 6.96$\pm$0.31 ($<$0.001) & 9.48$\pm$0.43 \\
		& bias & 3.7$\pm$1.1 ($<$0.001) & 3.7$\pm$1.1 ($<$0.001) & 4.1$\pm$1.2 \\ \hline
		\multirow{2}{*}{\eqref{eqn:MeaMFG} with $S_m = \diag{100,50,-50}$} & attitude & 7.44$\pm$0.27 ($<$0.001) & 7.43$\pm$0.27 ($<$0.001) & 10.23$\pm$0.62 \\
		& bias & 3.5$\pm$1.0 ($<$0.001) & 3.5$\pm$1.0 ($<$0.001) & 4.3$\pm$1.2 \\ \hline \hline
	\end{tabular}
\end{table*}

\begin{figure}
	\centering
	\includegraphics{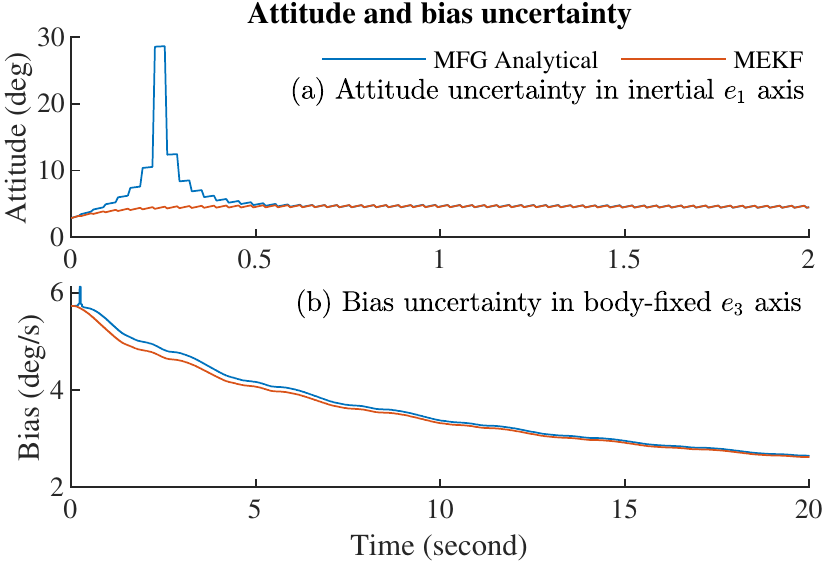}
	\caption{Attitude and bias uncertainties of a particular simulation with large initial errors and falsely high confidence.
		The attitude uncertainty is calculated as the square root of the first diagonal term of the attitude covariance matrix in the inertial frame, i.e. $R\Sigma R^T$ for MEKF and $U(\tr{S}I-S)U^T$ for the MFG filter, where $R$ is the attitude estimate and $\Sigma$ is the covariance matrix for attitude in MEKF.
		The bias uncertainty for MEKF is given as the square root of the corresponding diagonal term in the covariance matrix, and for the MFG filter is given by the square root of the third diagonal term in $\mathrm{cov}(x,x)$ calculated using \eqref{eqn:MFGEx} and \eqref{eqn:MFGExx}.
		The MFG unscented filter is similar to the MFG analytical filter, and is omitted. \label{fig:simUncertaintyB}}
\end{figure}

\section{Conclusion}

In this paper, we propose a new probability distribution, namely the matrix Fisher-Gaussian distribution (MFG), defined on the product manifold $\SO\times\real{n}$, by combining the matrix Fisher distribution and the Gaussian distribution.
MFG is constructed by conditioning a $(9+n)$-variate Gaussian distribution from the ambient Euclidean space into $\SO\times\real{n}$, with the correlation between the attitude and linear components constrained to be nonzero only in the tangent space at the mean attitude.
This construction leads to an intuitive geometric interpretation of the correlation terms of an MFG without over-parameterization.
Furthermore, a closed form approximate solution to the maximum likelihood estimation for MFG is presented by marginalizing the attitude part, which avoids using a computational costly numerical method.

Based on the proposed MFG, we design two Bayesian estimators to estimate the attitude and gyroscope bias concurrently.
The analytical filter calculates necessary moments from the gyroscope kinematic model directly, whereas the unscented filter uses sigma points to approximate these moments in an empirical fashion.
These moments are then matched to a new MFG using the MLE to propagate uncertainties.
When the attitude or vectors in the inertial frame are measured, the Bayes' formula is used to fuse the prediction with measurements, and an MFG is matched to the posterior density again using MLE.
The two filters are compared with the well-established multiplicative extended Kalman filter, where it is illustrated that their performances are almost identical for nominal cases.
However, under challenging circumstances described by (i) large initial errors with falsely high confidence, and (ii) highly non-isotropic measurement noises, the proposed Bayesian estimator with MFG exhibits substantial improvements in accuracy over MEKF. 

While this paper focuses on the attitude and gyro bias estimation, the proposed MFG can characterize the angular-linear correlation between the attitude of a rigid body and any linear random variable of an arbitrary dimension in a global fashion, which is the fundamental contribution of this paper. 
There is a great potential that the proposed MFG becomes utilized in any statistical analysis involving the rotation motion coupled with linear dynamics in the board area of science and engineering. 

\begin{figure}
	\centering
	\includegraphics{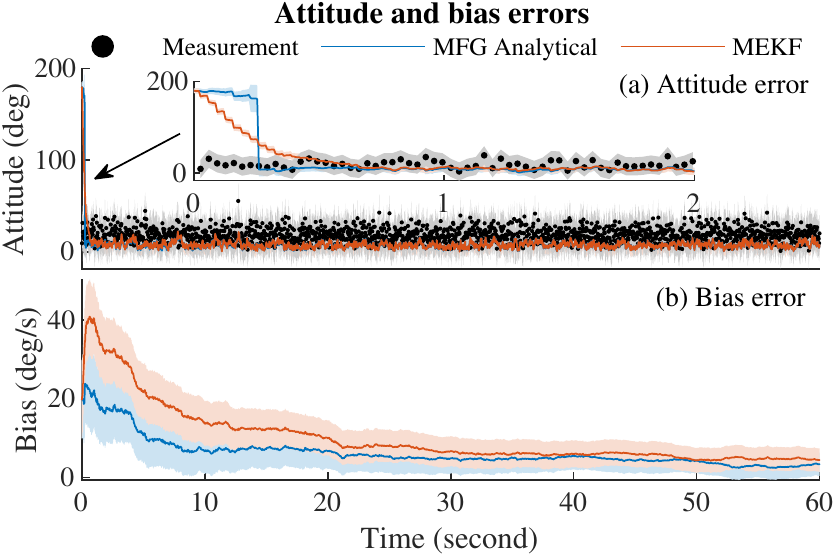}
	\caption{Attitude and bias errors in a particular simulation with large initial errors and falsely high confidence. \label{fig:simAccuracyMagB}}
\end{figure}

\begin{figure}
	\centering
	\includegraphics{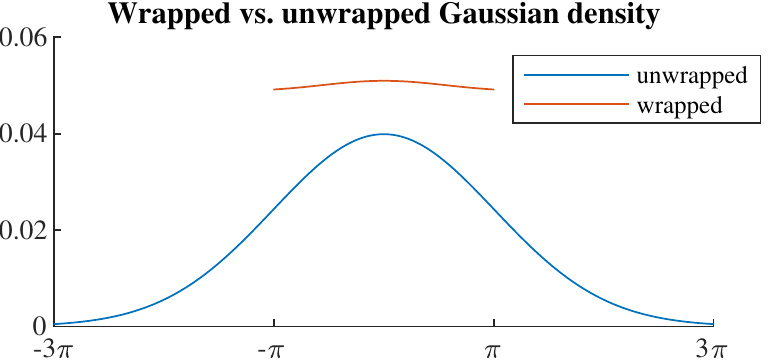}
	\caption{Gaussian density vs. wrapped Gaussian density with $\mu=0$ and $\sigma^2=10$.
	The wrapped Gaussian distribution is defined in the circular space $\mathbb{R}/2\pi$ by identifying $\theta$ with $\theta+2k\pi$ for $k\in\mathbb{Z}$, so the density beyond $[-\pi,\pi]$ is wrapped into $[-\pi,\pi]$. \label{fig:wrapping}}
\end{figure}


%

\appendices

\section{Equivalent Conditions for MFG} \label{appendix:MFGEquivalent}

In this appendix, we give a complete characterization of the equivalent conditions for MFG.
In order to first focus on the uniqueness problem with repeated singular values of $F$, we augment Definition \ref{def:psvd} by the following uniqueness condition:
\textit{the first nonzero element of each column of $U'$ is positive.} \cite{khatri1977mises}
This condition makes sure the columns of $U$ and $V$ cannot undergo simultaneous sign changes.

\begin{theorem} \label{thm:MFGEquivalent}
	Suppose $F=USV$, $\tilde{F}=\tilde{U}\tilde{S}\tilde{V}^T$ are the proper SVD of $F$ and $\tilde{F}$ with the augmented uniqueness condition.
	Then  $\mathcal{MG}(\mu,\Sigma,P,U,S,V)$ and $\mathcal{MG}(\tilde{\mu},\tilde{\Sigma},\tilde{P},\tilde{U},\tilde{S},\tilde{V})$, using either Definition \ref{def:MFG1} or \ref{def:MFG2}, are equivalent if and only if $\mu=\tilde{\mu}$, $S=\tilde{S}$, and one of the following conditions is satisfied:
	\begin{enumerate}
		\item if $s_1=s_2=s_3=0$, then $\Sigma = \tilde{\Sigma}$. \label{case:s1=s2=s3=0}
		\item if $s_1 \neq s_2=s_3=0$, then $\exists \theta_1, \theta_2 \in \mathbb{R}$ such that $\tilde{U}=UT_1$, $\tilde{V}=VT_2$ where $T_1 = \exp(\theta_1\hat{e}_1)$ and $T_2 = \exp(\theta_2\hat{e}_1)$, $[\tilde{P}_{:,2},\tilde{P}_{:,3}] = [P_{:,2},P_{:,3}]\begin{bmatrix} \cos\theta_1 & -\sin\theta_1 \\ \sin\theta_1 & \cos\theta_1\end{bmatrix}$ where $P_{:,i}$ is the $i$-th column of $P$, and $\Sigma = \tilde{\Sigma}$. \label{case:s1 neq s2=s3=0}
		\item if $s_1=s_2=s_3 \neq 0$, then $\exists T\in\SO$ such that $\tilde{U}=UT$, $\tilde{V}=VT$, $\tilde{P}=PT$ and $\Sigma=\tilde{\Sigma}$. \label{case:s1=s2=s3 neq 0}
		\item if $s_1 \neq s_2=s_3 \neq 0$, then $\exists \theta\in\mathbb{R}$ such that $\tilde{U}=UT$, $\tilde{V}=VT$, $\tilde{P}=PT$, where $T=\exp(\theta\hat{e}_1)$, and $\Sigma=\tilde{\Sigma}$.\label{case:s1 neq s2=s3 neq 0}
		\item if $s_1=s_2 \neq |s_3|$, then $\exists \theta\in\mathbb{R}$ such that $\tilde{U}=UT$, $\tilde{V}=VT$, $\tilde{P}=PT$, where $T=\exp(\theta\hat{e}_3)$, and $\Sigma=\tilde{\Sigma}$.\label{case:s1=s2 neq |s3|}
		\item if $s_1 \neq s_2 \neq |s_3|$, then $U=\tilde{U}$, $V=\tilde{V}$, $P=\tilde{P}$, and $\Sigma=\tilde{\Sigma}$. \label{case:s1 neq s2 neq |s3|}
		\item if using Definition \ref{def:MFG1} and $s_1=s_2=-s_3 \neq 0$, then $\exists T\in\SO$ such that $\tilde{U}=UT$, $\tilde{V}=V\mathcal{D}_{12}T\mathcal{D}_{12}$, $\tilde{P}=PT$, and $\tilde{\Sigma} = \Sigma + P\left[ T(\tr{S}I-S)T^T - (\tr{S}I-S) \right]P^T$. \label{case:s1=s2=-s3 neq 0}
		\item if using Definition \ref{def:MFG1} and $s_1 \neq s_2=-s_3 \neq 0$, then $\exists \theta\in\mathbb{R}$ such that $\tilde{U}=UT$, $\tilde{V}=VT^T$, $\tilde{P}=PT$, and $\tilde{\Sigma} = \Sigma + P\left[ T(\tr{S}I-S)T^T - (\tr{S}I-S) \right]P^T$, where $T=\exp(\theta\hat{e}_1)$. \label{case:s1 neq s2=-s3 neq 0}
	\end{enumerate}
\end{theorem}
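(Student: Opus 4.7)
The plan is to leverage Proposition \ref{prop:MFGEquivalent}, which reduces equivalence of two MFGs to the three intermediate conditions $F=\tilde{F}$, $\mu_c(R)=\tilde{\mu}_c(R)$ for every $R\in\SO$, and $\Sigma_c=\tilde{\Sigma}_c$. I would first prove necessity by working through each of these, and then verify sufficiency by direct substitution into Definition \ref{def:MFG}. From $F=\tilde{F}$ together with both decompositions being proper SVDs under the augmented uniqueness condition, I would first conclude $\tilde{S}=S$ and that $(\tilde{U},\tilde{V})=(UT_1,VT_2)$ for some $(T_1,T_2)\in\SO\times\SO$ satisfying $T_1 S T_2^T=S$. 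The case structure of the theorem is then a classification of this stabilizer: reading off $T_1 S=ST_2$ entry by entry yields $T_1=T_2=I$ when the singular values are distinct and nonzero, a common rotation $T_1=T_2$ about the distinguished axis when two singular values coincide, any $T_1=T_2\in\SO$ when $S=sI$, and independent planar rotations when $s_2=s_3=0$. Cases \ref{case:s1=s2=-s3 neq 0} and \ref{case:s1 neq s2=-s3 neq 0} are specific to Definition \ref{def:MFG1} and arise because $S\mathcal{D}_{12}$, rather than $S$ itself, becomes a scalar multiple of the identity when $s_3=-s_2$; this admits the asymmetric solution $T_2=\mathcal{D}_{12}T_1\mathcal{D}_{12}$.

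Next, I would translate each admissible $(T_1,T_2)$ into the relation between $\tilde{\nu}_R$ and $\nu_R$. Using $\tilde{Q}=T_1^T Q T_2$ together with the identity $(T^T A T)^\vee=T^T A^\vee$ for $T\in\SO$ and antisymmetric $A$, a short manipulation (absorbing the $\mathcal{D}_{12}$-conjugation when it appears) shows $\tilde{\nu}_R=T_1^T\nu_R$ in every case. Substituting into $\mu+P\nu_R=\tilde{\mu}+\tilde{P}T_1^T\nu_R$ and noting that $\nu_R$ sweeps out the direct sum of the eigenspaces of $S$ associated with its nonzero singular values as $R$ varies over $\SO$, I obtain $\tilde{\mu}=\mu$ and $\tilde{P}T_1^T=P$ on that subspace. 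This yields the explicit constraints on $\tilde{P}$ stated in each case; in case \ref{case:s1 neq s2=s3=0}, $P$ and $\tilde{P}$ remain free on $\ker S$ but are related by the planar rotation in $\theta_1$ on the orthogonal complement.

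Finally, the $\Sigma_c$ identity reads $\tilde{\Sigma}=\Sigma+\tilde{P}(\tr{\tilde{S}}I-\tilde{S})\tilde{P}^T-P(\tr{S}I-S)P^T$. Whenever $T_1=T_2=T$ and $T$ commutes with $\tr{S}I-S$, which covers cases \ref{case:s1=s2=s3=0} through \ref{case:s1 neq s2 neq |s3|}, this collapses to $\tilde{\Sigma}=\Sigma$. For the Definition \ref{def:MFG1} cases with $s_3<0$, the asymmetry $T_2=\mathcal{D}_{12}T_1\mathcal{D}_{12}$ breaks this commutativity and leaves the residual term $P\bigl[T(\tr{S}I-S)T^T-(\tr{S}I-S)\bigr]P^T$ that appears in the theorem. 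The main obstacle I anticipate is the case bookkeeping: enumerating every admissible $(T_1,T_2)$ cleanly, verifying that $\nu_R$ actually fills the claimed subspace so that $\tilde{P}$ is uniquely pinned down there, and keeping the sign conventions from the augmented uniqueness condition straight across all eight subcases without missing or double-counting equivalences.
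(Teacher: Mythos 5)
Your overall architecture is exactly the paper's: reduce to $F=\tilde F$, $\mu_c=\tilde\mu_c$, $\Sigma_c=\tilde\Sigma_c$ via Proposition \ref{prop:MFGEquivalent}, classify the SVD ambiguity as a stabilizer computation $T_1ST_2^T=S$ (with the $\mathcal{D}_{12}$-twisted branch when $s_3=-s_2$ under Definition \ref{def:MFG1}), push the ambiguity through $\nu_R$ to constrain $\mu$ and $P$, and settle $\Sigma$ by whether $T$ commutes with $\tr{S}I-S$. The identity $\tilde\nu_R=T_1^T\nu_R$ is correct in all cases (including the twisted ones, since $T_2ST_1^T=S$ there as well), and the $\Sigma_c$ bookkeeping is right. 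The paper reaches the same conclusions by substituting specific rotations $Q=I,Q_1,Q_2,Q_3$ into $\mu_c=\tilde\mu_c$ rather than by a span argument, but that is a stylistic difference, not a different route.

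There is, however, one concrete error in the step that pins down $\tilde P$: the set $\{\nu_R : R\in\SO\}$ does \emph{not} span the sum of eigenspaces of $S$ with nonzero singular values. Writing $\nu_R=(QS-SQ^T)^\vee$ componentwise gives $(\nu_R)_i = s_jQ_{ij\text{-type terms}}$ involving only $s_j$ and $s_k$ for $\{i,j,k\}=\{1,2,3\}$; the $i$-th component vanishes identically iff $s_j=s_k=0$. So in case \ref{case:s1 neq s2=s3=0}, where $S=\diag{s_1,0,0}$, one has $\nu_R=(0,\,-s_1Q_{31},\,s_1Q_{21})^T$, which sweeps out $\mathrm{span}\{e_2,e_3\}=\ker S$, not $\mathrm{span}\{e_1\}$. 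Consequently $P_{:,1}$ is the free column and $P_{:,2},P_{:,3}$ are the columns pinned down (up to the planar rotation by $\theta_1$) — the opposite of your statement that ``$P$ and $\tilde P$ remain free on $\ker S$.'' As written, your rule contradicts the conclusion you then assert for case \ref{case:s1 neq s2=s3=0} (which is the correct one from the theorem). The fix is to replace the eigenspace characterization by the rule above; with that correction the rest of your argument, including the observation that only $T_1$ (not $T_2$) enters the constraints on $P$ and $\Sigma_c$, goes through.
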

\begin{proof}
	By Proposition \ref{prop:MFGEquivalent}, the two MFGs are equivalent if and only if $F=\tilde{F}$, $\mu_c=\tilde{\mu}_c$ for all $R\in\SO$, and $\Sigma=\tilde{\Sigma}_c$.

    The conditions provided in the theorem begin with $S=\tilde {S}$. 
    We first consider additional conditions to make them equivalent to $F=\tilde{F}$.
	Since a matrix cannot have two sets of different singular values, $F=\tilde{F}$ implies $S=\tilde{S}$.
    For case \ref{case:s1=s2=s3=0}), $S=\tilde{S}$ trivially implies $F=\tilde {F}$ as they are all zeros.
	For case \ref{case:s1 neq s2 neq |s3|}), $F=\tilde{F}$ if and only if their proper SVDs are the same, due to the augmented uniqueness condition for SVD when the singular values are different.
    In short, $F=\tilde{F}$ if and only if $S=\tilde{S}$ for cases 1) and 6).
	However, when $F$ has repeated singular values, $U$ and $V$ are only unique up to a rotation.
	We consider three possibilities regarding the multiplicity of $S$.
	
	(i) If $s_1 \neq s_2=s_3=0$, corresponding to case \ref{case:s1 neq s2=s3=0}) in the theorem, then $F=\tilde{F}$ if and only if $U\diag{s_1,0,0}V^T = \tilde{U}\diag{s_1,0,0}\tilde{V}^T$.
	This means the first column of $U$ and $V$ is unique, while other columns can be arbitrarily chosen as long as $U,V\in\SO$, which is the same as the conditions on $U,V$ in case \ref{case:s1 neq s2=s3=0}).

	(ii) If $s_1=s_2$ and(or) $s_2=s_3 \neq 0$, which corresponds to case \ref{case:s1=s2=s3 neq 0}) to \ref{case:s1=s2 neq |s3|}) in the theorem, then the left and right singular vectors (columns of $U$ and $V$) w.r.t the repeated singular value form subspaces of $\real{3}$.
	And $F=\tilde{F}$ if and only if the left and right singular vectors w.r.t the repeated singular value are rotated by the same rotation matrix, while the singular vectors w.r.t the non-repeated singular value are the same.
	The above condition on $U,V$ is the same as those given in case \ref{case:s1=s2=s3 neq 0}) to \ref{case:s1=s2 neq |s3|}).
	
	(iii) If $s_2=-s_3 \neq 0$, corresponding to cases \ref{case:s1=s2=-s3 neq 0}) and \ref{case:s1 neq s2=-s3 neq 0}) in the theorem.
	Let $\mathcal{D}_{12} = \diag{1,1,-1}$, $S' = S\mathcal{D}_{12} \triangleq \diag{s'_1,s'_2,s'_3}$. 
    Then $USV^T = US'\mathcal{D}_{12}V^T$.
	Since $s'_2=s'_3$, for all $\theta\in\mathbb{R}$ and $T=\exp(\theta\hat{e}_1)$, we have
	\begin{align*}
		US'\mathcal{D}_{12}V^T &= UTS'T^T\mathcal{D}_{12}V^T = UTS'\mathcal{D}_{12}\mathcal{D}_{12}T^T\mathcal{D}_{12}V^T \\
		&= UTS(V\mathcal{D}_{12}T\mathcal{D}_{12})^T = UTS(VT^T)^T.
	\end{align*}
	This shows the sufficiency and necessity for the conditions on $U,V$ in case \ref{case:s1 neq s2=-s3 neq 0}).
	For case \ref{case:s1=s2=-s3 neq 0}), $T$ can be arbitrarily chosen from $\SO$, but in general $\mathcal{D}_{12}T\mathcal{D}_{12} \neq T^T$.
	
	Next, we consider the equivalent conditions for $\mu_c=\tilde{\mu}_c$ and $\Sigma_c=\tilde{\Sigma}_c$.
	For case \ref{case:s1=s2=s3=0}), the matrix Fisher density parts reduce to one, $\mu_c = \mu$, and $\Sigma_c=\Sigma$, so the remaining Gaussian density parts are equivalent if and only if their means and covariance matrices are the same, as shown in Proposition \ref{prop:MFGEquivalent}.
	Next, the three possibilities regarding the multiplicity of $S$ are considered individually as follows.
	
	For (i), let $Q=U^TRV$, since $\tilde{U}=UT_1$, $\tilde{V}=VT_2$ and $S=\tilde{S}$, we have
	\begin{align} \label{eqn:muc'=muc}
		\tilde{\mu}_c &= \tilde{\mu} + \tilde{P}(T_1^TU^TRVT_2S-ST_2^TV^TR^TUT_1)^\vee \nonumber \\
		&= \tilde{\mu} + \tilde{P}T_1^T(U^TRVT_2ST_1^T-T_1ST_2^TV^TR^TU)^\vee \nonumber \\
		&= \tilde{\mu} + \tilde{P}T_1^T(QS-SQ^T)^\vee \nonumber \\
		&= \tilde{\mu} + \tilde{P}T_1^T\begin{bmatrix} 0 & -s_1Q_{31} & s_1Q_{21} \end{bmatrix}^T.
	\end{align}
	Note that $Q$ and $R$ have a one-to-one correspondence, so $\mu_c = \tilde{\mu}_c$ for all $R\in\SO$ is equivalent to $\mu_c = \tilde{\mu}_c$ for all $Q\in\SO$.
	It is clear the conditions on $\mu$ and $P$ in case \ref{case:s1 neq s2=s3=0}) are sufficient for this.
	On the other hand, if $\mu_c = \tilde{\mu}_c$ for all $Q\in\SO$, substituting $Q=I_{3 \times 3}$ into \eqref{eqn:muc'=muc} proves $\mu=\tilde{\mu}$; substituting $Q = \begin{bmatrix} 0 & 0 & 1 \\ 0 & 1 & 0 \\ -1 & 0 & 0 \end{bmatrix} \triangleq Q_2$ and $Q = \begin{bmatrix} 0 & -1 & 0 \\ 1 & 0 & 0 \\ 0 & 0 & 1 \end{bmatrix} \triangleq Q_3$ into \eqref{eqn:muc'=muc} proves $[\tilde{P}_{:,2},\tilde{P}_{:,3}] = [P_{:,2},P_{:,3}]\begin{bmatrix} \cos\theta_1 & -\sin\theta_1 \\ \sin\theta_1 & \cos\theta_1\end{bmatrix}$.
	Finally, denote $T'_1 = \begin{bmatrix} \cos\theta_1 & -\sin\theta_1 \\ \sin\theta_1 & \cos\theta_1\end{bmatrix}$, then
	\begin{align}
		&\tilde{P}(\tr{S}I_{3 \times 3}-S)(\tilde{P})^T \nonumber \\
		= &\begin{bmatrix} P_{:,1} & \begin{bmatrix} P_{:,2} & P_{:,3} \end{bmatrix}T'_1 \end{bmatrix} \begin{bmatrix} 0 & 0 & 0 \\ 0 & s_1 & 0 \\ 0 & 0 & s_1 \end{bmatrix} \begin{bmatrix} P_{:,1}^T \\ (T'_1)^T \begin{bmatrix} P_{:,2}^T \\ P_{:,3}^T \end{bmatrix} \end{bmatrix} \nonumber \\
		= &\begin{bmatrix} P_{:,2} & P_{:,3} \end{bmatrix}T'_1 \begin{bmatrix} s_1 & 0 \\ 0 & s_1 \end{bmatrix} (T'_1)^T \begin{bmatrix} P_{:,2}^T \\ P_{:,3}^T \end{bmatrix} \nonumber \\
		= &P(\tr{S}I_{3 \times 3}-S)P^T,
	\end{align}
	so $\Sigma_c = \tilde{\Sigma}_c$ if and only if $\Sigma = \tilde{\Sigma}$.
	
	For (ii), a similar calculation as in \eqref{eqn:muc'=muc} shows $\tilde{\mu}_c = \tilde{\mu} + \tilde{P}T^T \begin{bmatrix} s_2Q_{32}-s_3Q_{23} & s_3Q_{13}-s_1Q_{31} & s_1Q_{21}-s_2Q_{12} \end{bmatrix}^T$.
	The sufficiency of the conditions on $\mu$ and $P$ in case \ref{case:s1=s2=s3 neq 0}) to \ref{case:s1=s2 neq |s3|}) follows immediately.
	For the necessary direction, substituting $Q = I_{3 \times 3}$ into the above equation proves $\mu = \tilde{\mu}$;
	substituting $Q = \begin{bmatrix} 1 & 0 & 0 \\ 0 & 0 & -1 \\ 0 & 1 & 0 \end{bmatrix} \triangleq Q_1$, $Q=Q_2$ and $Q=Q_3$ proves $\tilde{P} = PT$ since $s_i+s_j>0$ (or $<0$ if $s_3<0$ in Definition \ref{def:MFG2}) for any $i \neq j$ in case \ref{case:s1=s2=s3 neq 0}) to \ref{case:s1=s2 neq |s3|}).
	Finally, note that the repeated diagonal entries of $S$ and $\tr{S}I_{3 \times 3}-S$ share the same indices, thus $T(\tr{S}I_{3 \times 3}-S)T^T = \tr{S}I_{3 \times 3}-S$.
	This proves $\Sigma_c = \tilde{\Sigma}_c$ if and only if $\Sigma = \tilde{\Sigma}$.
	The same argument in this paragraph also proves case \ref{case:s1 neq s2 neq |s3|}) in the theorem by letting $T = I_{3 \times 3}$.
	
	For (iii), a similar calculation as in \eqref{eqn:muc'=muc} yields
	\begin{align*}
		\tilde{\mu}_c &= \tilde{\mu} + \tilde{P}T^T(Q\mathcal{D}_{12}S-S\mathcal{D}_{12}Q^T)^\vee \nonumber \\
		&= \tilde{\mu} + \tilde{P}T^T\begin{bmatrix} s_2Q_{32}+s_3Q_{23} \\ -s_3Q_{13}-s_1Q_{31} \\ s_1Q_{21}-s_2Q_{12} \end{bmatrix}^T,
	\end{align*}
	which is exactly the same equation as in (ii) by replacing $s_3$ with $-s_3$.
	Therefore, the sufficiency and necessity of the conditions on $\mu$ and $P$ in case \ref{case:s1=s2=-s3 neq 0}) and \ref{case:s1 neq s2=-s3 neq 0}) follow from the same argument in (ii).
	Finally, we have
	\begin{equation}
		\tilde{\Sigma}_c = \tilde{\Sigma} + PT(\tr{S}I-S)T^TP^T.
	\end{equation}
	However, different from (ii), $T(\tr{S}I_{3 \times 3}-S)T^T \neq \tr{S}I_{3 \times 3}-S$.
	And as a result, $\Sigma_c = \tilde{\Sigma}_c$ if and only if $\tilde{\Sigma} = \Sigma + P\left[ T(\tr{S}I-S)T^T - (\tr{S}I-S) \right]P^T$.
\end{proof}

In other words, Theorem \ref{thm:MFGEquivalent} states that if $F$ has repeated singular values and $s_3\geq 0$, then an MFG can be parameterized differently by rotating $U$, $V$, and $P$ in a consistent way.
A notable difference occurs when $s_3<0$ between the two different definitions of MFG.
For definition \ref{def:MFG2}, this special case is included in case \ref{case:s1=s2=s3 neq 0}) and \ref{case:s1 neq s2=s3 neq 0}), and is the same as when $s_3>0$.
On the other hand, for Definition \ref{def:MFG1}, it is very different from when $s_3>0$ in the way that $\tilde{\Sigma} \neq \Sigma$ as indicated in case \ref{case:s1=s2=-s3 neq 0}) and \ref{case:s1 neq s2=-s3 neq 0}).
The reason for this is when $s_2=-s_3$, $T(\tr{S}I-S)T^T \neq \tr{S}I-S$.


The uniqueness problem caused by simultaneous sign changes of any two columns of $U$ and $V$ is dealt with in the following proposition.
\begin{proposition}
	Let $i\in\{1,2,3\}$, then $\mathcal{MG}(\mu,\Sigma,P,U,S,V)$ and $\mathcal{MG}(\mu,\Sigma,P\mathcal{D}_i,U\mathcal{D}_i,S,V\mathcal{D}_i)$ are equivalent.
\end{proposition}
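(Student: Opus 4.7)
The plan is to invoke Proposition \ref{prop:MFGEquivalent}, which reduces the equivalence of two MFGs to checking that the intermediate parameters $F$, $\mu_c(R)$ (for all $R\in\SO$), and $\Sigma_c$ agree. First I would verify that the new triple $(U\mathcal{D}_i, S, V\mathcal{D}_i)$ is a legitimate set of parameters, i.e., $U\mathcal{D}_i, V\mathcal{D}_i\in\SO$; this is immediate because $\mathcal{D}_i$ has exactly two $-1$'s and one $+1$, so $\det\mathcal{D}_i=+1$. The key structural fact I will use throughout is that $\mathcal{D}_i$ is diagonal and $\mathcal{D}_i^2 = I$, hence $\mathcal{D}_i$ commutes with every diagonal matrix (in particular $S$ and $\tr{S}I-S$), and also $\mathcal{D}_i \in \SO$ with $\mathcal{D}_i^T=\mathcal{D}_i$.

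For $F=\tilde F$: writing $\tilde F = (U\mathcal{D}_i)S(V\mathcal{D}_i)^T$ and sliding one $\mathcal{D}_i$ across $S$, the two $\mathcal{D}_i$'s collapse to $I$, giving $\tilde F = USV^T = F$. For $\Sigma_c=\tilde\Sigma_c$: the same commutation argument applied to $(P\mathcal{D}_i)(\tr{S}I-S)(P\mathcal{D}_i)^T$ yields $P(\tr{S}I-S)P^T$, so $\tilde\Sigma_c$ matches $\Sigma_c$.

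The only step with any content is $\mu_c(R)=\tilde\mu_c(R)$ for all $R$. Let $Q=U^TRV$ and $\tilde Q = (U\mathcal{D}_i)^T R(V\mathcal{D}_i) = \mathcal{D}_i Q \mathcal{D}_i$. Then $\tilde Q S - S\tilde Q^T = \mathcal{D}_i(QS-SQ^T)\mathcal{D}_i$ after commuting $\mathcal{D}_i$ past $S$. Since $\mathcal{D}_i\in\SO$, the identity $R\hat{x}R^T = \widehat{Rx}$ from equation (2), applied with $R=\mathcal{D}_i$ (symmetric, so $R^T=R$), gives $\mathcal{D}_i \hat{\nu}_R \mathcal{D}_i = \widehat{\mathcal{D}_i \nu_R}$. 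Thus $\tilde \nu_R = \mathcal{D}_i \nu_R$, and $\tilde\mu_c = \mu + (P\mathcal{D}_i)(\mathcal{D}_i \nu_R) = \mu + P\nu_R = \mu_c$.

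The only subtlety here is ensuring that the hat/vee identity (2) applies to $\mathcal{D}_i$, which needs $\det\mathcal{D}_i=+1$; on $\mathrm{O}(3)\setminus\SO$ a sign would appear and the argument would break. Once that observation is in place the proof is essentially a one-line verification against Proposition \ref{prop:MFGEquivalent}, with no need to consider the multiplicity cases from Theorem \ref{thm:MFGEquivalent}, since the transformation is built from diagonal sign flips that trivially preserve the canonical data of the distribution.
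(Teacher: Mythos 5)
Your proposal is correct and follows essentially the same route as the paper: the paper's proof simply cites Proposition \ref{prop:MFGEquivalent} together with the computations already carried out in the proof of Theorem \ref{thm:MFGEquivalent}, while you write out those verifications explicitly via the commutation of $\mathcal{D}_i$ with diagonal matrices and the identity $\mathcal{D}_i\hat{\nu}_R\mathcal{D}_i=\widehat{\mathcal{D}_i\nu_R}$ (valid precisely because $\det\mathcal{D}_i=+1$). Your observation that $\tilde{\nu}_R=\mathcal{D}_i\nu_R$, so that the two sign flips in $P\mathcal{D}_i$ and $\mathcal{D}_i\nu_R$ cancel, is exactly the content the paper leaves implicit.
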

\begin{proof}
	This is evident by Proposition \ref{prop:MFGEquivalent} and the calculations in the proof of Theorem \ref{thm:MFGEquivalent}.
\end{proof}

\section{Calculations for Propagated Moments} \label{appendix:moments}

In this appendix, calculations for the moments involving $(R_{k+1},x_{k+1})$ in Section \ref{sec:analyticalProp} are given.

\begin{proof}[Proof of Theorem \ref{thm:ER_{k+1}}]
	First, expand the first exponential term in \eqref{eqn:R_{k+1}factorization} into an infinite sum as
	\begin{align*}
		&e^{h (\hat x_k-\hat{\mu}_k) + (H_u\Delta W_u)^\wedge + o(h)} \\
        & = \sum_{i=0}^\infty \frac{1}{i!}\left\{ h (\hat x_k-\hat{\mu}_k) + (H_u\Delta W_u)^\wedge + o(h) \right\}^i.
	\end{align*}
	Note that (i) $\Delta W_u$ is a zero mean Gaussian vector with covariance matrix $hI_{3\times3}$, so its odd order moments are zero, and $\expect{\left((H_u\Delta W_u)^\wedge\right)^{2n}} \sim O(h^n)$; (ii) $o(h)$ in the above equation only has terms of order at least $h^2$ or $h\Delta W_u$ as shown in \eqref{eqn:R_{k+1}factorizationProof}.
	Combining these two observations, we have the first order approximation of $\expect{R_{k+1}}$ as
	\begin{align} \label{eqn:E(R_{k+1})Expand}
		&\expect{R_{k+1}} = \big\{ \expect{R_k} + h \expect{R_k(\hat x_k-\hat{\mu}_k)} \nonumber \\
        & + \frac{1}{2}\expect{R_k((H_u\Delta W_u)^\wedge)^2} \big\} e^{h(\hat{\Omega}_k+\hat{\mu}_k)} + O(h^2).
	\end{align}
    Then, since $(R_k,x_k)$ follows MFG,
	\begin{align*}
        \expect{R_k(\hat x_k-\hat{\mu}_k)} = \expect{R_k\widehat{P\nu_{R_k}}} = U_k\expect{Q_kV_k^T\widehat{P_k\nu_{R_k}}}.
	\end{align*}
	In addition, due to the independence of $R_k$ and $\Delta W_u$,
	\begin{align*}
		\expect{R_k((H_u\Delta W_u)^\wedge)^2} = \expect{R_k}(G_u-\tr{G_u}I).
	\end{align*}
    Substituting the above two equations into \eqref{eqn:E(R_{k+1})Expand} yields \eqref{eqn:E(R_{k+1})}.
\end{proof}

\begin{proof}[Proof of Theorem \ref{thm:otherMoments}]
    Equation \eqref{eqn:Ex_{k+1}} and \eqref{eqn:Exx_{k+1}} follow from the independence of $x_k$ and $\Delta W_v$, and the linearity of expectation.
    Equation \eqref{eqn:EvR_{k+1}} is from the fact that $\expect{Q_{k+1}} = U_{k+1}^T\expect{R_{k+1}}V_{k+1}$ and $S_{k+1}$ are both diagonal.

    In \eqref{eqn:R_{k+1}factorization}, we denote the exponent of the stochastic part as $\xi \triangleq h(x_k-\mu_k) + H_u\Delta W_u\in\real{3}$, and the deterministic part as $\delta R_k \triangleq e^{h(\hat{\Omega}_k+\hat{\mu}_k)}\in\SO$ such that
    \begin{align*}
        R_{k+1} = R_k e^{\xi + o(h)} \delta R_k.
    \end{align*}
    For any $A \in \real{3 \times 3}$, let $\Lambda(A)\in\real{3}$ be defined as $\Lambda(A) = (U_{k+1}^TAV_{k+1}S_{k+1}-S_{k+1}V_{k+1}^TA^TU_{k+1})^\vee$. 
    For example, $\nu_{R_{k+1}} = \Lambda(R_{k+1})$.
    Further, $\Delta W_v$ in \eqref{eqn:DistSDEBias} is independent of $R_k$ and $\xi$.
    Therefore, 
    \begin{align*}
        \expect{x_{k+1}\nu^T_{R_{k+1}}} = \sum_{i=0}^\infty \frac{1}{i!} \expect{x_k \Lambda (R_k(\hat{\xi}+o(h))^i \delta R_k)^T},
	\end{align*}
    after expanding $e^{\xi + o(h)}$. 
    Similar with \eqref{eqn:E(R_{k+1})Expand},  the first order approximation is 
    \begin{align} \label{eqn:ExvR_{k+1}Expand}
        & \expect{x_{k+1}\nu^T_{R_{k+1}}}  = \expect{x_k \Lambda(R_k\delta R_k )^T} \nonumber\\
        & \quad + h\expect{x_k \Lambda( R_k(\hat x_k-\hat{\mu}_k)\delta R_k )^T} \nonumber \\
        & \quad + \frac{1}{2}\expect{x_k \Lambda( R_k((H_u\Delta W_u)^\wedge)^2\delta R_k )^T} + O(h^2).
    \end{align}
	Now we calculate the three terms on the right hand side of \eqref{eqn:ExvR_{k+1}Expand}.
    Similar with \eqref{eqn:ExxTranspose}, the first term is
	\begin{align*}
        \expect{x_k\Lambda( R_k\delta R_k )^T} & = \mu_k\expect{\Lambda( R_k\delta R_k )}^T \\
                                               & \quad + P_k \expect{\nu_{R_k} \Lambda(R_k\delta R_k)^T },
	\end{align*}
	where
	\begin{align*}
        &\expect{\Lambda( R_k\delta R_k )} = \tilde{U} (\expect{Q_k}\tilde{S}^T-\tilde{S}\expect{Q_k}^T)^\vee = \tilde{U}\expect{\tilde{\nu}_R}, \\
        &\expect{\nu_{R_k} \Lambda( R_k\delta R )^T} \\
        &\quad= \expect{(Q_k S_k-S_kQ_k^T)^\vee ( (Q_k\tilde{S}^T-\tilde{S}Q_k^T)^\vee )^T} \tilde{U}^T \\
                                                      & \quad = \expect{\nu_{R_k} \tilde{\nu}^T_R}\tilde{U}^T.
	\end{align*}
    Next, we have
	\begin{align*}
        \Lambda & ( R_k(\hat{x}_k-\hat{\mu}_k)\delta R_k ) \\
                &= \tilde{U}\left( \mathrm{tr}(Q_k\tilde{S}^T)I - Q_k\tilde{S}^T \right)Q_kV_k^T(x_k-\mu_k) \\
		&= \tilde{U}\Gamma_QV_k^T(x_k-\mu_k).
	\end{align*}
    Therefore, the second term of \eqref{eqn:ExvR_{k+1}Expand} is
	\begin{align*}
		&\expect{x_k \Lambda ( R_k(\hat{x}_k-\hat{\mu}_k)\delta R_k )^T} = \expect{x_k(x_k-\mu_k)^TV_k\Gamma_Q^T}\tilde{U}^T \\
        &\qquad = \expect{(\Sigma_{c_k} + \mu_k\nu_{R_k}^TP_k^T + P_k\nu_{R_k}\nu_{R_k}^TP_k^T)V_k \Gamma_Q^T}\tilde{U}^T.
	\end{align*}
    Due to the independence of $\Delta W_u$ with other random variables, the third term of \eqref{eqn:ExvR_{k+1}Expand} is given by
	\begin{align*}
		&\expect{x_k \Lambda( R_k((H_u\Delta W_u)^\wedge)^2\delta R_k )^T} \\
		&\quad = h\expect{x_k\Lambda( R_kG_u\delta R_k )^T} - h\tr{G_u}\expect{x_k\Lambda( R_k\delta R_k )^T}.
	\end{align*}
    Let $\tilde{\tilde{V}} = V_{k+1}^T\delta R_k^T G_u^TV_k$ and $\tilde{\tilde{S}} = \tilde{U}^TS_k\tilde{\tilde{V}}$. 
    The first term of the above is
	\begin{align*}
		&\expect{x_k \Lambda( R_kG_u\delta R_k )^T} = \mu\left( (\expect{Q_k}\tilde{\tilde{S}}^T-\tilde{\tilde{S}}\expect{Q_k}^T)^\vee \right)^T \tilde{U}^T \\
        &\quad + P_k\expect{ \nu_{R_k} \left( (Q\tilde{\tilde{S}}^T-\tilde{\tilde{S}}Q^T)^\vee \right)^T}\tilde{U}^T \\
		&\quad = \mu\expect{\tilde{\tilde{\nu}}^T_R}\tilde{U}^T + P\expect{\nu_R\tilde{\tilde{\nu}}^T_R}\tilde{U}^T.
	\end{align*}
	Substituting these three terms into \eqref{eqn:ExvR_{k+1}Expand}, equation \eqref{eqn:ExvR_{k+1}} is derived.
	
	Finally, we present the expression for \eqref{eqn:EvRvR_{k+1}}.
	Similar to the derivation of \eqref{eqn:ExvR_{k+1}Expand}, after expanding the exponential term, we obtain
	\begin{align*}   
        \expect{v_{R_{k+1}}v^T_{R_{k+1}}} & = \sum_{i,j=0}^\infty \frac{1}{i! j!} \expect{\Lambda( R_k(\hat{\xi}+o(h))^i \delta R_k )  \right. \nonumber\\
        & \qquad \left. \times \Lambda( R_k(\hat{\xi}+o(h))^j \delta R_k )^T}.
    \end{align*}
    After collecting the first order terms, 
    \begin{align}\label{eqn:EvRvR_{k+1}Expand}
        &\expect{v_{R_{k+1}}v^T_{R_{k+1}}}  = \expect{ \Lambda( R_k\delta R )\Lambda( R_k\delta R )^T } \nonumber\\
        &\quad + h\expect{\Lambda( R_k\delta R )\, \Lambda( R_k(\hat{x}_k-\hat{\mu}_k)\delta R_k )^T} \nonumber \\
        &\quad + h\expect{\Lambda( R_k\delta R )\, \Lambda( R_k(\hat{x}_k-\hat{\mu}_k)\delta R_k )^T}^T \nonumber \\
		&\quad + \frac{1}{2}\expect{\Lambda( R_k\delta R )\, \Lambda( R_k((H_u\Delta W_u)^\wedge)^2\delta R_k )^T} \nonumber \\
		&\quad + \frac{1}{2}\expect{\Lambda( R_k\delta R )\, \Lambda( R_k((H_u\Delta W_u)^\wedge)^2\delta R_k )^T}^T \nonumber \\
		&\quad + \expect{\Lambda( R_k(H_u\Delta W_u)^\wedge\delta R_k )\, \Lambda( R_k(H_u\Delta W_u)^\wedge\delta R_k )^T }\nonumber\\
        &\quad + O(h^2), 
	\end{align}
    where the first term on the right hand side is
	\begin{align*}
		&\expect{\Lambda( R_k\delta R )\, \Lambda( R_k\delta R )^T} \\
		&\quad = \tilde{U} \expect{(Q_k\tilde{S}^T-\tilde{S}Q_k)^\vee((Q_k \tilde{S}^T-\tilde{S}Q_k)^\vee)^T} \tilde{U}^T \\
		&\quad = \tilde{U}\expect{\tilde{\nu}_R\tilde{\nu}^T_R}\tilde{U}^T,
	\end{align*}
	and the second term is 
	\begin{align*}
		&\expect{\Lambda( R_k\delta R_k )\, \Lambda( R_k(\hat{x}_k-\hat{\mu}_k )\delta R_k )^T} \\
        &\quad = \tilde{U} \expect{(Q_k \tilde{S}^T-\tilde{S}Q_k^T)^\vee \nu^T_{R_k} P_k^T V_k \Gamma_Q^T} \tilde{U}^T \\
        &\quad = \tilde{U} \expect{\tilde{\nu}_R\nu_{R_k}^T P^T V_k \Gamma_Q^T} \tilde{U}^T,
	\end{align*}
    Similarly, the fourth term on the right hand side of \eqref{eqn:EvRvR_{k+1}Expand} is
	\begin{align*}
		&\expect{\Lambda( R_k\delta R_k ) \Lambda( R_k((H_u\Delta W_u)^\wedge)^2\delta R_k )^T } \\ 
		&\quad = h\tilde{U}\expect{\tilde{\nu}_R\tilde{\tilde{\nu}}^T_R}\tilde{U}^T - \tr{G_u}h\tilde{U}\expect{\tilde{\nu}_R\tilde{\nu}_R^T}\tilde{U}^T,
	\end{align*}
    and the sixth term is
	\begin{align*}
		&\expect{\Lambda( R_k(H_u\Delta W_u)^\wedge\delta R_k ) \Lambda( R_k(H_u\Delta W_u)^\wedge\delta R_k )^T} \\
		&\quad = h\tilde{U} \expect{\Gamma_Q V_k^T G_u V_k \Gamma_Q^T} \tilde{U}^T.
	\end{align*}
	Substituting these into \eqref{eqn:EvRvR_{k+1}Expand} yields \eqref{eqn:EvRvR_{k+1}}.
\end{proof}

In order to calculate the moments on the right hand side of \eqref{eqn:ExvR_{k+1}} and \eqref{eqn:EvRvR_{k+1}} involving $\Gamma_Q$ using the moments of $Q_k$ up to the third order, we give a formula for $\Gamma_Q$ that depends linearly on $Q_k$.

\begin{lemma} \label{lemma:GammaQSimplified}
	$\Gamma_Q$ can be simplified as follows:
	\begin{align}
		(\Gamma_Q)_{11} &= \tilde{S}_{22}Q_{33} + \tilde{S}_{33}Q_{22} - \tilde{S}_{23}Q_{32} - \tilde{S}_{32}Q_{23} \nonumber \\
		(\Gamma_Q)_{12} &= \tilde{S}_{23}Q_{31} + \tilde{S}_{31}Q_{23} - \tilde{S}_{21}Q_{33} - \tilde{S}_{33}Q_{21} \nonumber \\
		(\Gamma_Q)_{13} &= \tilde{S}_{21}Q_{32} + \tilde{S}_{32}Q_{21} - \tilde{S}_{22}Q_{31} - \tilde{S}_{31}Q_{22} \nonumber \\
		(\Gamma_Q)_{21} &= \tilde{S}_{13}Q_{32} + \tilde{S}_{32}Q_{13} - \tilde{S}_{12}Q_{33} - \tilde{S}_{33}Q_{12} \nonumber
	\end{align}
	\begin{align}
		(\Gamma_Q)_{22} &= \tilde{S}_{11}Q_{33} + \tilde{S}_{33}Q_{11} - \tilde{S}_{13}Q_{31} - \tilde{S}_{31}Q_{13} \nonumber \\
		(\Gamma_Q)_{23} &= \tilde{S}_{12}Q_{31} + \tilde{S}_{31}Q_{12} - \tilde{S}_{11}Q_{32} - \tilde{S}_{32}Q_{11} \nonumber \\
		(\Gamma_Q)_{31} &= \tilde{S}_{12}Q_{23} + \tilde{S}_{23}Q_{12} - \tilde{S}_{13}Q_{22} - \tilde{S}_{22}Q_{13} \nonumber \\
		(\Gamma_Q)_{32} &= \tilde{S}_{13}Q_{21} + \tilde{S}_{21}Q_{13} - \tilde{S}_{11}Q_{23} - \tilde{S}_{23}Q_{11} \nonumber \\
		(\Gamma_Q)_{33} &= \tilde{S}_{11}Q_{22} + \tilde{S}_{22}Q_{11} - \tilde{S}_{12}Q_{21} - \tilde{S}_{21}Q_{12}.
	\end{align}
\end{lemma}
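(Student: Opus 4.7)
The plan is to reduce the definition $\Gamma_Q = (\tr{Q_k\tilde S^T}I - Q_k\tilde S^T)Q_k$ to a column-wise computation amenable to the preliminary identity $(\hat{x}A + A^T\hat{x})^\vee = (\tr{A}I - A)x$. Writing $Q_k e_j$ for the $j$-th column of $Q_k$, the $j$-th column of $\Gamma_Q$ is $(\tr{A}I - A)Q_k e_j$ with $A = Q_k\tilde S^T$. Applying the identity with $x = Q_k e_j$ rewrites this column as $(\widehat{Q_k e_j}\,Q_k\tilde S^T + \tilde S Q_k^T\,\widehat{Q_k e_j})^\vee$.

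The next step is to exploit $Q_k \in \SO$ via the conjugation rule $\widehat{Rx} = R\hat{x}R^T$ from Section II.A, applied with $R = Q_k$ and $x = e_j$. This gives $\widehat{Q_k e_j} = Q_k \hat{e}_j Q_k^T$, and substituting this into the previous expression and cancelling $Q_k^T Q_k = I$ reduces the $j$-th column of $\Gamma_Q$ to
\begin{equation*}
    \Gamma_Q e_j \;=\; \bigl(Q_k \hat{e}_j \tilde S^T + \tilde S \hat{e}_j Q_k^T\bigr)^\vee.
\end{equation*}
Denote the bracketed skew-symmetric matrix by $B_j$. Note this step is what converts the originally quadratic expression in entries of $Q_k$ into a form that is linear in $Q_k$, by absorbing one copy of $Q_k$ into the hat through the conjugation identity.

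To finish, I would read off $(\Gamma_Q)_{ij} = (B_j^\vee)_i$ by inspection, using the hat convention in which $(B^\vee)_1 = B_{32}$, $(B^\vee)_2 = B_{13}$, $(B^\vee)_3 = B_{21}$ for any skew-symmetric $B$. Because $\hat{e}_j$ has only the two nonzero entries $\pm 1$, each entry $(B_j)_{ik}$ collapses to a sum of exactly four terms, each of the form (entry of $\tilde S$) times (entry of $Q_k$), matching precisely the nine formulas stated in the lemma.

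The hard part is purely bookkeeping: tracking the sign pattern of the hat map combined with the placement of the two nonzero entries of each $\hat{e}_j$. Once the case $j = 1$ is worked out as a template --- which already produces $(\Gamma_Q)_{11}$, $(\Gamma_Q)_{21}$, $(\Gamma_Q)_{31}$ in the expected linear form --- the remaining cases $j = 2$ and $j = 3$ follow by cyclic permutation of the indices with no new ideas required.
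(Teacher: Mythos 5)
Your proof is correct, and it reaches the stated formulas by a genuinely different mechanism than the paper. The paper's own proof is a one-line appeal to the cofactor identity: for $Q\in\SO$, $Q_{ij}=(Q^{-1})_{ji}=(-1)^{i+j}M_{ij}$, so after expanding $(\tr{Q_k\tilde S^T}I-Q_k\tilde S^T)Q_k$ entry by entry, every quadratic combination of entries of $Q_k$ that appears is a $2\times 2$ minor and collapses to a single entry of $Q_k$. You instead linearize \emph{before} any entry-level expansion: you convert each column to $(\widehat{Q_ke_j}\,Q_k\tilde S^T+\tilde S Q_k^T\,\widehat{Q_ke_j})^\vee$ via the identity $(\hat xA+A^T\hat x)^\vee=(\tr{A}I-A)x$, then absorb one copy of $Q_k$ through $\widehat{Q_ke_j}=Q_k\hat e_jQ_k^T$ and the cancellation $Q_k^TQ_k=I$, arriving at $\Gamma_Qe_j=(Q_k\hat e_j\tilde S^T+\tilde S\hat e_jQ_k^T)^\vee$. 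I checked this against the nine stated entries (e.g. $(\Gamma_Q)_{11}=(B_1)_{32}=\tilde S_{22}Q_{33}+\tilde S_{33}Q_{22}-\tilde S_{23}Q_{32}-\tilde S_{32}Q_{23}$) and it matches. The two routes rest on the same underlying fact about $\SO$ --- the conjugation rule $\widehat{Qx}=Q\hat xQ^T$ is equivalent to $Q$ equaling its own cofactor matrix --- but yours makes the linearity in $Q_k$ structurally manifest and reduces the remaining work to reading off entries of a skew-symmetric matrix, whereas the paper's is terser to state but requires the reader to expand all nine entries and recognize the minors by hand.
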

\begin{proof}
	This is directly calculated using the fact that for any $Q\in\SO$, $Q_{ij} = (Q^{-1})_{ji} =(-1)^{i+j}M_{ij}$, where $M_{ij}$ is the minor of $Q_{ij}$.
\end{proof}




\ifCLASSOPTIONcaptionsoff
  \newpage
\fi



\bibliographystyle{IEEEtran}
\bibliography{TACMFG}
%



%

\begin{IEEEbiography}{Weixin Wang}
Biography text here.
\end{IEEEbiography}

\begin{IEEEbiographynophoto}{Taeyoung Lee}
Biography text here.
\end{IEEEbiographynophoto}





\end{document}